\theoremstyle{plain}
\newtheorem{theorem}{Theorem}[section]
\theoremstyle{plain}
\theoremstyle{plain}
\newtheorem{lemma}[theorem]{Lemma}
\theoremstyle{plain}
\newtheorem{proposition}[theorem]{Proposition}
\theoremstyle{plain}
\newtheorem{corollary}[theorem]{Corollary}
\theoremstyle{definition}
\newtheorem{definition}[theorem]{Definition}
\theoremstyle{definition}
\newtheorem{remark}[theorem]{Remark}
\theoremstyle{plain}
\theoremstyle{plain}
\theoremstyle{plain}
\numberwithin{equation}{section}
\def\tteal{\textcolor{black}}
\newcommand{\Ec}{\mathcal{E}}
\newcommand{\Sc}{\mathcal{S}}
\newcommand{\Sb}{\mathbb{S}}
\newcommand{\Sn}{\mathbb{S}^{n-1}}
\newcommand{\Rn}{\mathbb{R}^n}
\newcommand{\Rb}{\mathbb{R}}
\newcommand{\D}{\mathrm{d}}
\newcommand{\Beq}{\begin{equation}}
\newcommand{\Eeq}{\end{equation}}
\newcommand{\dsx}{S_{\xi}}
\newcommand{\bfi}{\mathbf{i}}
\DeclareRobustCommand{\SkipTocEntry}[5]{}
\title[Unique continuation for the momentum ray transform]{Unique continuation for the momentum ray transform}
\author[Ilmavirta, Kow and Sahoo]{Joonas Ilmavirta, Pu-Zhao Kow and Suman Kumar Sahoo}
\address{Department of Mathematics and Statistics, University of Jyv\"askyl\"a, Finland.
\newline
E-mail:{\tt\ joonas.ilmavirta@jyu.fi}
\newline
ORCID: {\tt\ \tteal{0000-0002-2399-0911}}
}
\address{\tteal{Department of Mathematical Sciences, National Chengchi University, Taiwan.}
\newline
E-mail:{\tt\ \tteal{pzkow@g.nccu.edu.tw}}
\newline
ORCID: {\tt\ \tteal{0000-0002-2990-3591}}}
\address{\tteal{Department of Mathematics, Indian Institute of Technology, Bombay.}
\newline
E-mail:{\tt \ \tteal{suman@math.iitb.ac.in}}
\newline
ORCID: {\tt\ \tteal{0000-0002-6459-1597}}}
\begin{document}

\begin{abstract}
The present article focuses on a unique continuation result for certain weighted ray transforms, utilizing the unique continuation property (UCP) of the fractional Laplace operator. Specifically, we demonstrate a conservative property for momentum ray transforms acting on tensors, as well as the antilocality property for both weighted ray and cone transforms acting on functions.
\end{abstract}

\subjclass[2020]{Primary 46F12, 45Q05, 35R11}
\keywords{ray transform, tensor tomography, antilocality property, unique continuation property (UCP), fractional Laplacian, fractional elliptic operator, Saint Venant operator}

\maketitle

\tableofcontents

\begin{sloppypar}

\section{Introduction} 


To what extent is a tensor field determined by its line integrals?
Depending on the weighting and other choices in the line integrals, there may or may not be a gauge freedom or other non-uniqueness.
We will focus on the so-called momentum ray transform, and study
\tteal{an inverse problem with partial data}
with the help of the unique continuation principle or property (UCP).

The space of $m$-tensor fields in $\mathbb{R}^{n}$ is denoted by $T^{m} \equiv T^{m}(\mathbb{R}^{n})$, while its subspace of covariant symmetric $m$-tensor fields on $\mathbb{R}^{n}$ is denoted by $S^{m} \equiv S^{m}(\mathbb{R}^{n})$.  In Cartesian coordinates, an element $f$ can be written as
$$ f(x) = f_{i_1\dots i_m}(x)\, \D x^{i_1} \cdots \D x^{i_m},$$
where  $f_{i_1 \dots i_m}(x)$ is symmetric in all indices $i_1, \dots, i_m \in \{1,\dots,n\}$.
For repeated indices, Einstein summation convention will be assumed throughout this article. 

Let $ \Sc(S^m)= \Sc(\Rn; S^m(\Rn))$ be the Schwartz  class of symmetric $m$ tensor fields in $\Rn$.  We denote $\Sc(S^0)= \Sc (\Rn)$.
\tteal{The integral transforms that we focus on in our study can be defined on these spaces as follows:}
\begin{enumerate}
    \item 
    The ray transform $If$ of $f\in \Sc(S^m)$ is defined by
\begin{align*}
    If(x,\xi) := \int\limits_{-\infty}^{\infty}  f_{i_1\cdots i_m}(x+t\xi)  \, \xi_{i_1}\cdots \xi_{i_m} \D t.
\end{align*}
    \item
    The $k^{\rm th}$ momentum ray transform (MRT) $I^kf$ of $f\in \Sc(S^m)$ is defined by
\begin{align*}
    I^kf(x,\xi):= \int\limits_{-\infty}^{\infty} t^k\, f_{i_1\cdots i_m}(x+t\xi)  \, \xi_{i_1}\cdots \xi_{i_m} \D t \quad \mbox{for all integers $k\ge 0$}.
\end{align*}
    \item
    For $s\in(0,\frac{n}{2})$, the fractional momentum ray transform $I^{2s-1}f$ of $f\in \Sc(\Rn)$ is defined by
    \begin{equation}
\big( \mathcal{X}_{s}f \big) (x,\xi) := \big( I^{2s-1} f \big) (x,\xi) \equiv \int\limits_{0}^{\infty} t^{2s-1} f(x + t \xi)\, \D t. 
\end{equation}
\end{enumerate}

\tteal{It is easy to see that the fractional momentum ray transform $I^{2s-1}$ reduces to the classical momentum ray transform when $2s-1=k$ is a non-negative integer.} In the case $k=0$ the momentum ray transform reduces to the classical ray transform: $I^0=I$.
This transform has been  studied extensively due to its wide range of potential applications in various scientific fields; see \cite{PSU_book}. MRTs
were first introduced by Sharafutdinov \cite{Sharafutdinov_Book} and studied further in the works \cite{Krishnan2018,Krishnan2019a,SumanSIAM}. For functions, $I^kf$ appears to give the inversion formula of cone transform and conical Radon transform. \tteal{The conical Radon transform} has promising application in Compton cameras; see \cite{Kuchment_IPI,Parra2000,Smith2005,Tomitani2002}. For tensors, MRTs appear to solve Calder\'on type inverse problems for polyharmonic operators; see \cite{BKS,SS_linearize_polyharmonic}. Another motivation to study  MRTs are  its connection with the exponential ray transform defined as $(I_{\exp}^{\alpha}f)(x,\xi) := \int_{\Rb} e^{\alpha t } f_{i_1\cdots i_m}(x+t\xi)  \, \xi_{i_1}\cdots \xi_{i_m} \D t$, for some real number $\alpha$.
Formally (and truly whenever the series is convergent in a suitable sense) the exponential ray transform is related to the momentum ray transforms by
\[ I_{\exp}^{\alpha} f = \sum_{k=0}^{\infty} \frac{\alpha^k}{k!} I^kf. 
\] 
We now state the main results (not all) of our article. We refer the reader Section~\ref{sec:preliminary} for the definition of \tteal{the generalized Saint-Venant operator $R^k$ (see \eqref{GSV-equivalent}) and the normal operator $N^{p}$ (see \eqref{eq:normal_operator}) .} 
\tteal{In the following theorems, one should think of measurements being made only in the set $U$. A more detailed discussion of partial data results and their connection to UCP is given after the theorems.}

\begin{restatable}{theoremx}{goldbach}
\label{mrt_ucp} 
Let $m \in \mathbb{N}$ and an integer $0 \le k \le m$. Suppose that 
\begin{equation}
\begin{cases}
f \in L^{1}(\mathbb{R}^{n};S^{m}) \cap L_{\rm loc}^{2}(\mathbb{R}^{n};S^{m}) & \text{when $0 \le k \le n-1$,} \\
f \in L^{2}(\mathbb{R}^{n};S^{m}) \text{ with compact support} &\text{when $n \le k \le m$,}
\end{cases} \label{eq:assumption-f-normal-well-defined}
\end{equation}
\tteal{and assume that} there exists an nonempty open set $U$ in $\mathbb{R}^{n}$ such that  
\[
R^{k} f|_U = 0 \quad \text{for some $0 \leq k \leq m$. }
\]
\tteal{If} there exists $x_{0} \in U$ such that for each $0 \leq p \leq k$ the following assumption holds: 
\begin{equation}
\text{$N^{p}f$ vanishes at $x_{0}$ of infinite derivative order,} \label{eq:vanishes-infinite-derivative-order}
\end{equation}
then $R^{k}f \equiv 0$ in $\mathbb{R}^{n}$. If we additionally assuming that $f$ has compact support, then $f$ is a generalized potential field, that is, $f= \D^{k+1}v$ for some $v\in \Ec'(\Rn; S^{m-k-1})$. We refer \eqref{GSV-equivalent} and \eqref{eq:normal_operator} for the definitions of $R^k$ and $N^k$ respectively.
\end{restatable}



\begin{restatable}{theoremx}{measurable}\label{thm:MUCP-ray-transform-function}
    Suppose $U \subseteq \Rb^n$ be any non empty open set and $ n\ge 2$. Suppose that $N^{0}f \in H^{\frac{1}{2}}(\mathbb{R}^{n})$. If $f|_{U}=0$ and there exists a positive measure set $E$ in $U$ such that $\left( N^{0}f \right)|_{E}=0$, then $f \equiv 0$ in $\mathbb{R}^{n}$.
\end{restatable}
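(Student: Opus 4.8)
The plan is to recast the hypotheses in terms of the half-Laplacian through the normal operator and then run the unique continuation machinery for $(-\Delta)^{1/2}$. Put $u := N^{0}f$. By the normal operator identity recorded in Section~\ref{sec:preliminary} (for $n\ge 2$, the operator $N^{0}$ equals, up to a nonzero dimensional constant $c_{n}$, the Riesz potential of order one), the assumption $N^{0}f\in H^{1/2}(\mathbb{R}^{n})$ says that $u\in H^{1/2}(\mathbb{R}^{n})$ and
\begin{equation*}
(-\Delta)^{1/2}u = c_{n}\,f \qquad\text{in }\mathbb{R}^{n}.
\end{equation*}
Hence $f|_{U}=0$ gives $(-\Delta)^{1/2}u=0$ in the open set $U$, and $\left(N^{0}f\right)|_{E}=0$ gives $u|_{E}=0$ on the set $E\subseteq U$ of positive Lebesgue measure.

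The crucial reduction is to upgrade the vanishing on the merely measurable set $E$ to vanishing of infinite order at a single point. Since $(-\Delta)^{1/2}$ is elliptic of order one and $(-\Delta)^{1/2}u=0$ in $U$, interior regularity gives $u\in C^{\infty}(U)$ (indeed $u$ is real analytic in $U$, but smoothness is all we use). Pick a Lebesgue density point $x_{0}\in E$; since $E\subseteq\{u=0\}$, the zero set of $u$ has density $1$ at $x_{0}$. A smooth function whose zero set has density $1$ at a point must vanish there to infinite order: if the lowest nonvanishing homogeneous term $P_{d}\not\equiv 0$ appeared in the Taylor expansion of $u$ at $x_{0}$, then near $x_{0}$ the set $\{u=0\}$ would be confined to a shrinking neighborhood of the algebraic cone $\{P_{d}=0\}$, whose density at $x_{0}$ is $0$ — a contradiction. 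Thus $u$ vanishes at $x_{0}$ of infinite derivative order. (Alternatively, interior real analyticity makes this immediate, since a real analytic function that is not identically $0$ on a connected open set has a null set of Lebesgue measure zero.)

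It remains to invoke unique continuation for $(-\Delta)^{1/2}$. Now $u\in H^{1/2}(\mathbb{R}^{n})$ solves $(-\Delta)^{1/2}u=0$ in the open set $U\ni x_{0}$ and vanishes at $x_{0}$ of infinite derivative order; this is precisely the situation handled by the strong unique continuation property of the fractional Laplacian employed in the proof of Theorem~\ref{mrt_ucp}, which yields $u\equiv 0$ on a nonempty open subset of $U$, and then the weak UCP of Ghosh--Salo--Uhlmann for $(-\Delta)^{1/2}$ propagates this to $u\equiv 0$ in all of $\mathbb{R}^{n}$. Consequently $c_{n}f=(-\Delta)^{1/2}u=0$, that is, $f\equiv 0$ in $\mathbb{R}^{n}$. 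The only genuinely non-routine point is the measure-theoretic reduction in the second step; once the vanishing has been localized to an infinite-order point, the rest is a direct application of the normal operator identity together with the strong and weak UCP of $(-\Delta)^{1/2}$ already in hand.
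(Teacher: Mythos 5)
Your argument is correct, and its first step coincides with the paper's: set $u=N^{0}f$, use the identity $(-\Delta)^{1/2}N^{0}f=c_{n}f$ from \eqref{eq:well-known-normal-operator} to translate $f|_{U}=0$ into $(-\Delta)^{1/2}u=0$ in $U$, and note $u|_{E}=0$. The divergence is in how the unique continuation from the positive-measure set $E$ is handled. The paper simply invokes Lemma~\ref{lem:MUCP} (the measurable UCP of \cite[Proposition~5.1]{Ghosh_Salo_single_measurement} with $s=\tfrac12$, $q=0$), which is stated for exactly this configuration and immediately gives $u\equiv 0$. You instead re-derive that lemma's content: interior regularity for $(-\Delta)^{1/2}$ makes $u$ smooth (indeed real analytic) in $U$, a Lebesgue density point of $E$ forces infinite-order vanishing of $u$ there, and then the strong UCP for the fractional Laplacian finishes the job. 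Each step of this reduction is sound --- the density-point-to-infinite-order argument via the lowest nonvanishing Taylor term is the standard one, and $H^{1/2}$ is precisely the regularity needed for the strong UCP at $s=\tfrac12$ --- and it essentially reproduces the internal proof of the cited proposition, which makes your write-up more self-contained but longer; it does rely on the strong UCP of $(-\Delta)^{1/2}$, which the paper never isolates as a lemma (it enters only implicitly through \cite{Keijo_partial_function} in the proof of Theorem~\ref{mrt_ucp}). A small simplification available to you: once you know $f|_{U}=0$ and $N^{0}f$ vanishes to infinite order at $x_{0}\in U$, you are verbatim in the hypotheses of \cite[Theorem~1.1]{Keijo_partial_function} and could conclude $f\equiv 0$ in one stroke, bypassing the two-step ``vanish on an open set, then propagate by antilocality'' ending.
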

 

\tteal{We refer the reader Section~\ref{subsec:ray-transform-elliptic-operator} for the definition of the average operator $\mathcal{A}_{s}$, see for instance equation \eqref{def_A_s}.}

\begin{restatable}{theoremx}{fractional}\label{thm:UCP-open-Xray} 
Let $n \ge 2$ be an integer, $0 < s \le \frac{n}{4}$ with $s \neq \mathbb{Z}$ and let $f \in L^{\frac{2n}{n+4s}}(\mathbb{R}^{n})$. If there exists a non-empty open set $U$ in $\mathbb{R}^{n}$ such that \tteal{$f = \mathcal{A}_{s}f = 0$ in $U$, then $f=0$ in $\Rn$}, where $(\mathcal{A}_s f)(x)= c(n,-s) \int_{\Sn} (\mathcal{X}_s f)(x,\xi)\, \D S(\xi)$ with $c(n,s) := \frac{2^{2s} \Gamma(\frac{n+2s}{2})}{\pi^{n/2}|\Gamma(-s)|}$.
\end{restatable}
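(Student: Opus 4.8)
The plan is to recognize that, once averaged over the sphere, the fractional momentum ray transform $\mathcal{X}_s$ is a Riesz potential, so that $\mathcal{A}_s$ coincides with $(-\Delta)^{-s}$, and then to feed this into the unique continuation property (UCP) of the fractional Laplacian. First I would pass to polar coordinates in the definition of $\mathcal{A}_s f$: writing $y = x+t\xi$ with $t = |y-x|$ and $\xi = (y-x)/|y-x|$, so that $\D t\,\D S(\xi) = |y-x|^{-(n-1)}\,\D y$, one gets
\[
(\mathcal{A}_s f)(x) = c(n,-s)\int_{\Sn}\int_0^\infty t^{2s-1} f(x+t\xi)\,\D t\,\D S(\xi) = c(n,-s)\int_{\Rn} \frac{f(y)}{|x-y|^{n-2s}}\,\D y .
\]
The constant $c(n,-s)$ is precisely the normalizing constant of the Riesz kernel of order $2s$, so this identity reads $\mathcal{A}_s f = (-\Delta)^{-s} f$. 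Under the hypothesis $f\in L^{2n/(n+4s)}(\Rn)$ with $0<s<n/4$, the Hardy--Littlewood--Sobolev inequality shows that this Riesz potential converges absolutely a.e.\ (hence also $\mathcal{X}_s f$ is finite a.e., by the same Fubini computation) and that $u := \mathcal{A}_s f \in L^2(\Rn)$; on the Fourier side $\widehat u(\xi) = |\xi|^{-2s}\widehat f(\xi)$, so that $(-\Delta)^s u = f$ in $\mathcal{S}'(\Rn)$. Most of this is, presumably, already recorded in Section~\ref{sec:preliminary}.

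With this in hand, the hypotheses ``$f = \mathcal{A}_s f = 0$ in $U$'' translate into $u|_U = 0$ and $(-\Delta)^s u|_U = f|_U = 0$. When $s\in(0,1)$ --- which, note, is automatic when $n\le 4$, given $0<s\le n/4$ --- I would simply invoke the Ghosh--Salo--Uhlmann UCP for $(-\Delta)^s$ (recalled in Section~\ref{sec:preliminary}) applied to $u\in L^2(\Rn)=H^0(\Rn)$: it yields $u\equiv 0$, whence $f = (-\Delta)^s u\equiv 0$ in $\Rn$.

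For $n\ge 5$ one may have $s>1$, and here I would reduce to the previous case. Write $s = m+\sigma$ with $m\in\Nb$ and $\sigma\in(0,1)$ --- possible precisely because $s\notin\Zb$ --- and set $w := (-\Delta)^m u$. Since $(-\Delta)^m$ is a local, constant-coefficient operator and $u$ vanishes on $U$, the distribution $w$ vanishes on $U$, while $(-\Delta)^\sigma w = (-\Delta)^{m+\sigma} u = f$ also vanishes on $U$; moreover $w$ lies in an $L^2$-based Sobolev space (one checks $w = I_{2\sigma}f \in L^{2n/(n+4m)}(\Rn) \hookrightarrow H^{-2m}(\Rn)$ via Hardy--Littlewood--Sobolev and Sobolev embedding, using $m<s\le n/4$). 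The fractional UCP for $(-\Delta)^\sigma$, $\sigma\in(0,1)$, then forces $w\equiv 0$, i.e.\ $\Delta^m u\equiv 0$ in $\Rn$, so $\widehat u$ is supported at the origin and $u$ is a polynomial; being in $L^2(\Rn)$ it must vanish, and again $f=(-\Delta)^s u\equiv 0$. Alternatively, one may quote a ready-made UCP for $(-\Delta)^s$ with $s\in\Rb_+\setminus\Nb$ from the higher-order fractional Calder\'on literature and skip this reduction entirely.

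The main obstacle --- really the only non-routine point --- is the first step: establishing the identity $\mathcal{A}_s = (-\Delta)^{-s}$ together with the correct mapping properties, i.e.\ checking via Hardy--Littlewood--Sobolev that the Riesz potential is well defined and lands in an $L^2$-based space under the stated integrability of $f$; once this is in place the theorem is a black-box application of the fractional UCP. The secondary technical nuisances I would expect are the bookkeeping of Sobolev exponents needed to invoke the fractional UCP when $s>1$, and the endpoint $s=n/4$, where Hardy--Littlewood--Sobolev degenerates to a weak-type estimate, so that a small density or approximation argument is needed to place $u$ in a usable function space before the UCP can be applied.
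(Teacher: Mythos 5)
Your proposal is correct and follows essentially the same route as the paper: identify $\mathcal{A}_s$ with the Riesz potential $(-\Delta)^{-s}$, use Hardy--Littlewood--Sobolev to place $u=\mathcal{A}_sf$ in $L^2(\mathbb{R}^n)$ with $(-\Delta)^su=f$ (Lemma~\ref{lem:averaging-operator-fractional-Laplacian}), and then apply the antilocality of the fractional Laplacian. The only difference is that the paper invokes Lemma~\ref{lem:UCP-open}, which is already stated for all non-integer $s>0$ on $H^r(\mathbb{R}^n)$ for arbitrary $r$, so your hand-rolled reduction of the case $s>1$ to $\sigma\in(0,1)$ (while valid) is not needed --- this is exactly the ``ready-made UCP'' alternative you mention, and your remark about the endpoint $s=n/4$ is a point the paper itself glosses over.
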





The following table contains the list of some existing results in this direction:\smallskip

\begin{tabular}{ |p{3cm}||p{3cm}|p{4cm}|p{3cm}|  }
 \hline
 \multicolumn{4}{|c|}{Uniqueness results} \\
 \hline
Transforms & Full data & UCP & Support theorem\\
 \hline
 Ray transform   &  \cite{Helgason_Book,Sharafutdinov_Book}   & \cite{Keijo_partial_function,Keijo_partial_vector_field,DVS_UCP_2022}&\cite{Krishnan2007}\\
 $d$ plane transform & \cite{Helgason_Book}  & \cite{CMR_ucp}& \cite{Helgason_Book}\\
 MRT& \cite{Krishnan2018}  & \cite{DVS_UCP_2022}, current article&\cite{Abhishek-Mishra}\\
 Cone transform& \cite{Kuchment_IPI}& current article & unknown\\
\hline
\end{tabular}
\smallskip

The ray transform with full data is a classical concept, and we have only given text book references rather than original articles for it.

The unique continuation result for ray transform can be considered as a partial data uniqueness result, in which one considers the following question:
If a function or a symmetric tensor field $f$ satisfies $If=0$ and $f=0$ in some open set $U$, 
then can one conclude that $f=0$ (for functions) or $f$ is a potential field (for symmetric tensor fields) in the whole space?

This question is related to the \emph{interior tomography problem} or the \emph{region of interest (ROI) tomography} problem. 
The goal of ROI tomography is to reconstruct the function within an open set (the region of interest) from the line integrals over all lines through that set.
It is well known that from this information one cannot recover the whole function in the region of interest; see \cite[Example 2.1]{Klann_Quinto_IP} 
However, it is possible to recover the singularities of the function in the ROI because of the fact that ray transform $If$ is an elliptic Fourier integral operator (FIO) and its normal operator $I^*If$ is an elliptic pseudo-differential operator.
Therefore by pseudo-local property, we have $\mbox{WF}(f)=\mbox{WF}(I^*If)$, see \cite{quinto_book_chapter}.
This fact serves as a motivation for Lambda tomography \cite{Smith_local_tomography}, which is a local reconstruction method that uses ROI  data to reconstruct singularities.


However, the problem we pose is different:
We only use the rays that meet the set $U$, but our goal is to reconstruct the scalar or tensor field outside $U$, not inside it.
We assume that something about the field is known in $U$.
The simplest choice is $f|_U=0$, but this can be relaxed to the gauge invariant assumption $Rf|_U=0$, where $R$ is the Saint Venant operator.
The assumption that the integrals over all lines vanish can be relaxed to $(I^k)^*I^kf$ vanishing in a subset of positive measure of $U$ or vanishing to infinite order at a point of $U$.
\tteal{We need to ensure $(I^k)^*I^kf$ to be smooth so that the assumption ``vanishing to infinite order'' make sense: in fact this} follows from ellipticity of the normal operator $(I^k)^*I^k$ as a pseudo-differential operator.

The following are equivalent for  $f\in \Sc(S^m)$ and $0\leq k\leq m$:
\begin{enumerate}
\item $I^kf=0$ for all lines
\item $R^kf=0$ (the $k^{\rm th}$ power of the Saint-Venant operator)
\item $f$ is the $(k+1)^{\rm st}$ symmetrized covariant derivative of some $v\in\Sc(S^{m-k-1})$ (with the convention that tensor fields of order $-1$ are identically zero)
\end{enumerate}
To prove a similar conclusion for our partial data setting, we aim to pass from partial data on $I^kf$ to the global conclusion $R^kf=0$, from which the conclusion follows.
The advantage of  working with $R^kf$ is that it is a differential operator which is local in nature --- unlike the integral operator $I^k$.

By connecting the normal operator of ray transform with fractional Laplacian, we see that the UCP for ray transforms is a consequence of UCP (more precisely, the antilocality property) of fractional Laplacian. UCP (Unique Continuation Property) is a useful tool for studying inverse problems, where one aims to recover an unknown function from measured data. UCP ensures that if a solution to a PDE vanishes in  a suitable sense, then it must be identically zero. UCP has been extensively studied  for local operators (e.g., Laplace and wave operators) \cite{IsakovPartialData07} and more recently for nonlocal operators (e.g., fractional Laplace equation,  and fractional wave equation of peridynamic type) \cite{GSU-fractional,Tuhin_Ibero}. Interestingly, the proof of UCP for some nonlocal elliptic operators requires the use of UCP for local elliptic equations, see \cite{GR19unique}.

The rest of the article is organized as follows. In Section~{\rm \ref{sec:preliminary}}, we provide a recap of some preliminary results, including definitions and notation that we will use in the upcoming sections. In Section~{\rm \ref{sec:saint Venant}},  we study more properties of \tteal{the} generalized Saint Venant operator, and  a new decomposition theorem of symmetric tensor fields is presented in Section \ref{sec:Decomposition-solenoidal-potential}. Finally, we recall our main results and prove them in Section \ref{sec:main results}. In Appendix \ref{appen:Negative-power-elliptic}, we provide the domain of a negative fractional power of a general elliptic operator (see Definition \ref{def:domain_of_anisotroipic_fractional_operator}) analogously to the fractional Laplace operator.  




\addtocontents{toc}{\SkipTocEntry}
\section*{Acknowledgements}

J. I. was supported by the Academy of Finland (grants 332890 and 351665). \tteal{P.-Z. K was supported by the NCCU Office of research and development and the National Science and Technology Council of Taiwan (grant NSTC~112-2115-M-004-004-MY3).} P.-Z. K and S. K. S were partly supported by the Academy of Finland (Centre of Excellence in Inverse Modelling and Imaging, grant 312121) and by the European Research Council under Horizon 2020 (ERC CoG 770924).
\tteal{We thank the anonymous referees for their comments.}

\section{Preliminaries}\label{sec:preliminary}

In order to make the paper self-contained, in this section, we introduce some operators -- especially the momentum ray transforms and Saint-Venant operators -- as well as some of their known properties. We are not going to exhaust all the details here, see e.g.\ \cite{DVS_UCP_2022,Krishnan2018,Sharafutdinov_Book} and references therein for more details. Readers who are already familiar with the subject may proceed to the next section.


\addtocontents{toc}{\SkipTocEntry}
\subsection{Momentum ray transform on Schwartz space}

For each vectors $\xi^{(1)},\cdots,\xi^{(m)} \in \mathbb{R}^{n}$, their \emph{tensor product} (or \emph{juxtaposition}) $\xi^{(1)} \otimes \cdots \otimes \xi^{(m)}$ is defined by 
\[
( \xi^{(1)} \otimes \cdots \otimes \xi^{(m)} )_{i_{1}\cdots i_{m}} := \xi_{i_{1}}^{(1)} \cdots \xi_{i_{m}}^{(m)}.
\]
If $\xi^{(1)} = \cdots = \xi^{(m)} = \xi$, we simply denote 
\[
\xi^{\otimes m} := \xi^{(1)} \otimes \cdots \otimes \xi^{(m)} \in S^{m}\quad \text{and} \quad \xi_{i_{1}\cdots i_{m}}^{\otimes m} := (\xi^{\otimes m})_{i_{1}\cdots i_{m}}. 
\]
We also denote $\langle \cdot , \cdot \rangle : S^{m} \times S^{m} \rightarrow \mathbb{R}$ by 
\[
\langle f,g \rangle := f_{i_{1}\cdots i_{m}}g_{i_{1}\cdots i_{m}}.
\]
Let ${\Sc}(\mathbb{R}^{n};S^m)$ be the Schwartz class of symmetric $m$ tensor fields in $\Rn$. Given any non-negative integer $k$, we define the mapping $J^k :{\Sc}(\mathbb{R}^{n};S^m) \rightarrow C^{\infty}(\mathbb{R}^{n} \times (\mathbb{R}^{n} \setminus \{0\}))$ by
\begin{equation}\label{eq:definition of extended momentum ray transform}
(J^k f)(x,\xi) := \int\limits_{-\infty}^\infty t^k\,\langle f(x+t\xi),\xi^{\otimes m}\rangle \, \D t \equiv \int\limits_{-\infty}^\infty t^k\, f_{i_1\dots i_m}(x+t\xi)\,\xi_{i_1} \cdots \xi_{i_m} \, \D t
\end{equation}
for all $(x,\xi) \in \mathbb{R}^{n} \times (\mathbb{R}^{n} \setminus \{0\})$. Denote the tangent bundle of the unit sphere by 
\[
T \mathbb{S}^{n-1} := \begin{Bmatrix}\begin{array}{l|l} (x,\xi) \in \mathbb{R}^{n} \times \mathbb{S}^{n-1} & \langle x,\xi \rangle = 0 \end{array}\end{Bmatrix}.
\]
Since each point $(x,\xi)\in T{\Sb}^{n-1}$ determines a unique line $x+t\xi$ with $t\in \Rb$, then it  make sense to denote $I^{k} \equiv J^{k}|_{T \mathbb{S}^{n-1}}$ be the restriction of $J^{k}$ on $T \mathbb{S}^{n-1}$.
Let $\mathcal{S}(T \mathbb{S}^{n-1})$ be the space of smooth functions $\varphi(x,\xi)$ on $T \mathbb{S}^{n-1}$ such that all their derivatives decrease rapidly in the first argument \cite[Section~2.1]{Sharafutdinov_Book}. \tteal{It is easily seen that}  
\begin{equation}
I^{k} \equiv J^{k}|_{T \mathbb{S}^{n-1}} : \mathcal{S}(\mathbb{R}^{n};S^{m}) \rightarrow \mathcal{S}(T \mathbb{S}^{n-1}) \quad \mbox{is bounded}. \label{eq:definition of momentum ray transform}
\end{equation}

\begin{definition} \label{def:MRT-schwartz}
For each $m \in \mathbb{Z}_{\ge 0}$ and integer $0 \le k \le m$, we call \eqref{eq:definition of momentum ray transform} the \emph{$k^{\rm th}$ momentum ray transform} of a symmetric $m$-tensor field. 
\end{definition}


\begin{remark}[An equivalent relation]
From \cite[(2.6)]{Krishnan2018} we have 
\begin{equation}\label{eq:relation between Ik and Jk}
(J^q f)(x,\xi) = |\xi|^{m-2q-1} \sum_{\ell=0}^q (-1)^{q-\ell} 
\binom{q}{\ell} |\xi|^{\ell}  \langle\xi,x\rangle^{q-\ell}  (I^\ell f)
\underbrace{\left( x - \frac{\langle x, \xi \rangle}{|\xi|^2} \xi , \frac{\xi}{|\xi|}\right)}_{\in \, T\Sb^{n-1}}
\end{equation}
\tteal{for all $(x,\xi) \in \mathbb{R}^{n} \times (\mathbb{R}^{n} \setminus \{0\})$}, which implies, for each integer $0 \le k \le m$, that 
\[
\begin{aligned}
&(I^{0}f,I^{1}f,\cdots,I^{k}f)=(0,0,\cdots,0) \:\tteal{\mbox{if and only if}} \:(J^{0}f,J^{1}f,\cdots,J^{k}f)=(0,0,\cdots,0). 
\end{aligned}
\]
\end{remark}

\addtocontents{toc}{\SkipTocEntry}
\subsection{Momentum ray transform on compactly supported tensor field distributions\label{subsec:assumption-regularity-MUCP}}

It is well-known that the (classical) ray transform $I^{0}$ is well-defined on compactly supported tensor field distributions \cite{Sharafutdinov_Book}. Similar extension for momentum ray transforms was considered in \cite{BKS} in order to study the polyharmonic operator inverse problem. Similar ideas also work for momentum ray transforms \cite[Section~2.3]{DVS_UCP_2022}, which will be presented here in order to make the paper self-contained. Let $\Ec'(\mathbb{R}^{n};S^{m})$ be the compactly supported symmetric $m$ tensor distribution fields in $\mathbb{R}^{n}$, and we define the bounded linear operator $I^k: \Ec'(\Rb^n; S^m) \to \Ec'(T\Sn)$ by
\begin{equation}
( I^kf , g )_{T \mathbb{S}^{n-1}} = \left( f , (I^k)^* g \right)_{\mathbb{R}^{n};S^{m}} \quad \text{for all $f \in \Ec'(\Rb^n; S^m)$ and $ g \in C^{\infty}(T\Sn)$,}
\end{equation}
where the distributional adjoint $(I^{k})^{*} : C^{\infty}(T\Sn) \rightarrow C^{\infty}(\mathbb{R}^{n};S^{m})$ is defined by 
\begin{equation}
\left( (I^{k})^{*}g \right)_{i_{1}\cdots i_{m}} (x) := \int\limits_{\mathbb{S}^{n-1}} \langle x,\xi\rangle^k \xi_{i_1} \dots \xi_{i_m} \, g(x-\langle x,\xi\rangle\xi,\xi) \, \D \dsx \quad \text{for all $x \in \mathbb{R}^{n}$.} \label{eq:distributional-adjoint-Ik-star}
\end{equation}
Similarly, we define $J^{k} : \Ec'(\mathbb{R}^{n};S^{m}) \rightarrow \mathcal{D}'(\mathbb{R}^{n} \times \mathbb{S}^{n-1})$ by 
\[
( J^{k}f,g )_{\mathbb{R}^{n} \times \mathbb{S}^{n-1}} = \left( f,(J^{k})^{*}g \right)_{\mathbb{R}^{n};S^{m}} \quad \text{for all $f\in \Ec'(\mathbb{R}^{n};S^{m})$ and $g \in C_{c}^{\infty}(\mathbb{R}^{n}\times \mathbb{S}^{n-1})$,}
\]
where the distributional adjoint $(J^{k})^{*}: C_{c}^{\infty}(\mathbb{R}^{n} \times \mathbb{S}^{n-1}) \rightarrow C^{\infty}(\mathbb{R}^{n};S^{m})$ is defined by 
\[
\left( (J^k)^{*} g \right)_{i_1 \cdots i_m}(x) := \int\limits_{\Sb^{n-1}}\int\limits_{\Rb} t^{k}g(x-t\xi, \xi)\xi_{i_1} \cdots \xi_{i_m}\, \D t\, \D \dsx \quad \text{for all $x \in \mathbb{R}^{n}$.}
\]

\addtocontents{toc}{\SkipTocEntry}
\subsection{Normal operator of momentum ray transforms}
Using \eqref{eq:definition of momentum ray transform}, for each integer $0 \le k \le m$, let us denote $N^k= (I^k)^*I^k : \mathcal{S}(\Rb^n; S^m) \to C^{\infty}(\Rb^n; S^m)$ the \emph{normal operator} of the $k^{\rm th}$ momentum ray transform in Definition~{\rm \ref{def:MRT-schwartz}}. For each $f \in \mathcal{S}(\mathbb{R}^{n};S^{m})$, we have
\begin{equation}
\begin{aligned}
&(N^{k}f)_{i_{1} \cdots i_{m}} (x) \\
&\quad = 2 \sum\limits_{\ell=0}^k \binom{k}{\ell} (-1)^{\ell} x_{p_{1} \cdots p_{2k-\ell}}^{\otimes (2k-\ell)} \left( f_{j_{1} \dots j_{m}} * \Xi_{p_{1} \cdots p_{2k-\ell} i_{1} \cdots i_{m} j_{1} \dots j_{m}} \right)(x),
\end{aligned} \label{eq:normal_operator}
\end{equation}
for all $x \in \mathbb{R}^{n}$, where  
\[
\Xi_{p_{1} \cdots p_{2k-\ell} i_{1} \cdots i_{m} j_{1} \dots j_{m}}(z) := \frac{ z_{p_{1} \cdots p_{2k-\ell} i_{1} \cdots i_{m} j_{1} \dots j_{m}}^{\otimes (2m+2k-\ell)}}{|z|^{2m+2k-2\ell +n-1}} \quad \text{for all $z \in \mathbb{R}^{n}$,}
\]
see \cite[(2.13)]{DVS_UCP_2022}, therefore the normal operators extend to the mapping
\begin{equation}
(N^{0},\cdots,N^{m}) : \Ec'(\mathbb{R}^{n};S^{m}) \rightarrow (\mathcal{S}'(\mathbb{R}^{n};S^{m}))^{m+1} \label{eq:extension-normal-compactly-distribution}
\end{equation}
as the convolution of a compactly supported distribution and a tempered distribution. Since 
\[
|\Xi_{p_{1} \cdots p_{2k-\ell} i_{1} \cdots i_{m} j_{1} \dots j_{m}}(z)| \le \frac{1}{|z|^{n-1-\ell}} \quad \text{for all $\ell \in \mathbb{Z}_{\ge 0}$,}
\]
then the convolution with $\Xi_{p_{1} \cdots p_{2k-\ell} i_{1} \cdots i_{m} j_{1} \dots j_{m}}(z)$ is \tteal{a mapping} from $L^{p}(\mathbb{R}^{n})$ to $L^{q}(\mathbb{R}^{n})$ provided $\frac{1}{q} = \frac{1}{p} - \frac{1+\ell}{n}$ when $0 \le \ell \le n-2$, see \eqref{eq:Riesz-domain-range}. When $\ell=n-1$, One can see that the convolution with $\Xi_{p_{1} \cdots p_{2k-\ell} i_{1} \cdots i_{m} j_{1} \dots j_{m}}(z)$ is a mapping from $L^{1}(\mathbb{R}^{n})$ to itself. Therefore the normal operators also extend to the mapping
\begin{equation}
N^{k} : L^{1}(\mathbb{R}^{n};S^{m}) \rightarrow \mathcal{D}'(\mathbb{R}^{n};S^{m}) \quad \text{for all integer $0 \le k \le n-1$.} \label{eq:extension-normal-L1}
\end{equation}
When $m = k = 0$, \eqref{eq:normal_operator} becomes 
\begin{equation}
\left( N^{0}|_{S^{0}}f \right) = 2 f * \frac{1}{|\cdot|^{n-1}} \quad \text{for all $f \in \Ec'(\mathbb{R}^{n}) \cup \mathcal{S}(\mathbb{R}^{n})$,} \label{eq:normal_operator_S0}
\end{equation}
and for each integer $\ell \ge 0$ we know that 
\begin{equation}
N^{0}|_{S^{0}} : W^{\ell,p}(\mathbb{R}^{n}) \rightarrow W^{\ell,q}(\mathbb{R}^{n}) \text{ is bounded provided } \frac{1}{q} = \frac{1}{p} - \frac{1}{n}, \label{eq:Riesz-domain-range2}
\end{equation}
see \eqref{eq:Riesz-domain-range1}. 

\addtocontents{toc}{\SkipTocEntry}
\subsection{Some differential operators}
We now recall  certain differential operators from \cite[Section~2.1]{Sharafutdinov_Book}. The \emph{symmetrization with respect to a part of indices} is formally given by 
\[
\sigma(i_{1},\cdots,i_{p}) u_{i_{1},\cdots,i_{m}} = \frac{1}{p!} \sum_{\pi \in \Pi_{p}} u_{i_{\pi(1)}\cdots i_{\pi(p)} i_{p+1} \cdots i_{m}},
\]
where $\Pi_{p}$ is the set of $p$-permutations. Let $\mathscr{D}'(\mathbb{R}^{n};S^{m})$ be the symmetric $m$ tensor distribution fields in $\mathbb{R}^{n}$. The \emph{inner derivative} or \emph{symmetrized derivative} is denoted as $\D : \mathscr{D}'(\Rb^n; S^m) \to \mathscr{D}'(\Rb^n; S^{m+1})$ given by 
\begin{equation}
(\D f)_{i_{1} \cdots i_{m+1}} = \sigma(i_{1} \cdots i_{m+1}) \frac{\partial f_{i_{1} \cdots i_{m}}}{\partial x_{i_{m+1}}} \equiv \frac{1}{(m+1)!} \sum_{\pi \in \Pi_{m+1}} \frac{\partial f_{i_{\pi(1)}\cdots i_{\pi(m)}}}{\partial x_{i_{\pi(m+1)}}} \label{eq:inner-product}
\end{equation}
The \emph{divergence} $\delta : \mathscr{D}'(\Rb^n; S^m) \to \mathscr{D}'(\Rb^n; S^{m-1})$ is defined by 
\begin{equation}\label{def_of_delta}
(\delta f)_{i_{1} \dots i_{m-1}} = \frac{\partial f_{i_{1} \dots i_{m}}}{\partial x^{i_{m}}}.
\end{equation}
The operators $\D$ and $-\delta$ are formally dual to each other with respect to $L^2$ inner product in the sense of 
\[
(\D u,v)_{\mathbb{R}^{n};S^{m+1}} = - (u,\delta v)_{\mathbb{R}^{n};S^{m}}
\]
for all $u \in C_{c}^{\infty}(\mathbb{R}^{n};S^{m})$ and $v \in C_{c}^{\infty}(\mathbb{R}^{n};S^{m+1})$, see \cite[(2.1.8)]{Sharafutdinov_Book}.
It is also interesting to mention that $\delta^{k+1} N^k f = 0$ and for each integer $0 \le r \le k$ that 
\begin{equation}
\begin{aligned}
\left( \delta^{r} N^{k} f \right)(x) &= \frac{k!}{(k-r)!} \int\limits_{\mathbb{S}^{n-1}} \langle x,\xi \rangle^{k-r} \xi^{\otimes (m-r)} \left( I^{k}f \right) (x - \langle x,\xi \rangle\xi , \xi) \, \D S_{\xi} \\
&= \frac{k!}{(k-r)!} \sum\limits_{\ell=0}^k \binom{k}{\ell} \int\limits_{\mathbb{S}^{n-1}} \langle x, \xi\rangle^{2k-r-\ell} \xi^{\otimes (m-r)} \left( J^{\ell} f \right)(x, \xi) \, \D S_{\xi}, 
\end{aligned} \label{eq:divergence-normal-operator}
\end{equation}
for all $f \in \mathcal{E}'(\mathbb{R}^{n};S^{m}) \cup \mathcal{S}(\mathbb{R}^{n};S^{m})$, see \cite[(2.14)]{DVS_UCP_2022}. 
Given any $m \in \mathbb{Z}_{\ge 0}$ and an integer $0 \le k \le m$, it is well-known that the momentum ray transforms (in Definition~{\rm \ref{def:MRT-schwartz}}) satisfy the following  property: 
\begin{equation}
I^{k}|_{S^{m}}(\D v) = - k I^{k-1}|_{S^{m-1}} (v) \quad \text{for all $v \in \Ec'(\mathbb{R}^{n};S^{m-1}) \cup \mathcal{S}(\mathbb{R}^{n};S^{m-1})$,} \label{eq:scaling-MRT}
\end{equation}
where $\D$ is given in \eqref{eq:inner-product}, see e.g. \cite[Section 2]{Krishnan2018}. By utilizing \eqref{eq:scaling-MRT}, we can easily show that the similar scaling property also holds for their corresponding normal operators. 

\begin{lemma}\label{lem:normal-operator-derivative-commute}
Given any $m \in \mathbb{Z}_{\ge 0}$ and integer $0 \le k \le m$, there holds 
\begin{equation*}
\delta N^{k}|_{S^{m}}(\D v) = -k^{2} N^{k-1}|_{S^{m-1}} (v) \quad \text{for all $v \in \Ec'(\mathbb{R}^{n};S^{m-1}) \cup \mathcal{S}(\mathbb{R}^{n};S^{m-1})$.}
\end{equation*}
\end{lemma}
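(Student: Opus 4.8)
The plan is to reduce the identity to two facts already on the table: the action of the divergence on the normal operator, equation~\eqref{eq:divergence-normal-operator} specialized to $r=1$, and the scaling identity~\eqref{eq:scaling-MRT} for the momentum ray transforms themselves. The first observation is that for $v$ in the stated class the tensor field $\D v$ lies in $\Ec'(\mathbb{R}^{n};S^{m}) \cup \mathcal{S}(\mathbb{R}^{n};S^{m})$, since the inner derivative is a differential operator and hence preserves both compact support and the Schwartz class; therefore \eqref{eq:divergence-normal-operator} and \eqref{eq:scaling-MRT} may be applied to $f = \D v$ within their stated domains, with no further regularity discussion needed.

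Concretely, assume first $k \ge 1$. Applying \eqref{eq:divergence-normal-operator} with $r=1$ and $f = \D v$ (so that the combinatorial factor is $\tfrac{k!}{(k-1)!} = k$) gives
\[
\big( \delta N^{k}|_{S^{m}}(\D v) \big)(x) = k \int\limits_{\mathbb{S}^{n-1}} \langle x,\xi\rangle^{k-1}\, \xi^{\otimes(m-1)}\, \big( I^{k}|_{S^{m}}(\D v) \big)(x - \langle x,\xi\rangle\xi,\xi)\, \D S_{\xi}.
\]
Next, substitute the scaling relation $I^{k}|_{S^{m}}(\D v) = -k\, I^{k-1}|_{S^{m-1}}(v)$ from \eqref{eq:scaling-MRT} into the integrand; this extracts a factor $-k^{2}$ and leaves
\[
\big( \delta N^{k}|_{S^{m}}(\D v) \big)(x) = -k^{2} \int\limits_{\mathbb{S}^{n-1}} \langle x,\xi\rangle^{k-1}\, \xi^{\otimes(m-1)}\, \big( I^{k-1}|_{S^{m-1}}v \big)(x - \langle x,\xi\rangle\xi,\xi)\, \D S_{\xi}.
\]
Finally, recognize the remaining integral: comparing with the adjoint formula~\eqref{eq:distributional-adjoint-Ik-star}, read with $m$ replaced by $m-1$ and $k$ by $k-1$, it is precisely $\big( (I^{k-1}|_{S^{m-1}})^{*}\, I^{k-1}|_{S^{m-1}} v \big)(x) = (N^{k-1}|_{S^{m-1}}v)(x)$, which yields the claim. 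For the degenerate case $k=0$ there is nothing to prove: by \eqref{eq:scaling-MRT} one has $I^{0}|_{S^{m}}(\D v) = 0$, hence $N^{0}|_{S^{m}}(\D v) = (I^{0})^{*} I^{0}(\D v) = 0$, so both sides vanish (the right-hand side because of the factor $k^{2}$).

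There is no genuine analytic obstacle here; the only thing that needs care is the bookkeeping of tensor orders, i.e.\ keeping track that the momentum ray transform and normal operator appearing after the substitution are those on $S^{m-1}$ rather than $S^{m}$, so that the final integral is correctly identified as $N^{k-1}|_{S^{m-1}}v$. An alternative, perhaps more transparent, route avoids \eqref{eq:divergence-normal-operator} altogether: one first establishes the operator identity $\delta\,(I^{k}|_{S^{m}})^{*} = k\,(I^{k-1}|_{S^{m-1}})^{*}$ by differentiating \eqref{eq:distributional-adjoint-Ik-star} under the integral sign --- the chain-rule cross term drops out because $\xi_{i_{m}}(\delta_{j i_{m}} - \xi_{j}\xi_{i_{m}}) = \xi_{j} - |\xi|^{2}\xi_{j} = 0$ on $\mathbb{S}^{n-1}$ --- and then combines it with $N^{k} = (I^{k})^{*}I^{k}$ and \eqref{eq:scaling-MRT} to get, for $k\ge 1$, $\delta N^{k}(\D v) = \delta (I^{k})^{*} I^{k}(\D v) = -k\, \big(\delta (I^{k})^{*}\big) I^{k-1}v = -k^{2} (I^{k-1})^{*} I^{k-1}v = -k^{2} N^{k-1}v$, all identities extending from smooth $v$ to $\Ec'$ by duality.
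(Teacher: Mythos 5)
Your proposal is correct and follows essentially the same route as the paper: specialize \eqref{eq:divergence-normal-operator} to $r=1$ with $f=\D v$, substitute the scaling identity \eqref{eq:scaling-MRT} to pull out the factor $-k^{2}$, and recognize the remaining spherical integral as $N^{k-1}|_{S^{m-1}}v$. The explicit treatment of the degenerate case $k=0$ and the sketched alternative via $\delta\,(I^{k})^{*}=k\,(I^{k-1})^{*}$ are correct additions but do not change the argument.
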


\begin{proof}
Choosing $f = \D v$ and $r=1$ in \eqref{eq:divergence-normal-operator}, using \eqref{eq:scaling-MRT} one obtain
\[
\begin{aligned}
\left( \delta N^{k}|_{S^{m}} \D v \right)(x) &= k \int\limits_{\mathbb{S}^{n-1}} \langle x,\xi \rangle^{k-1} \xi^{\otimes (m-1)} \left( I^{k}|_{S^{m}} (\D v) \right) (x - \langle x,\xi \rangle\xi , \xi) \, \D S_{\xi} \\
&= -k^{2} \int\limits_{\mathbb{S}^{n-1}} \langle x,\xi \rangle^{k-1} \xi^{\otimes (m-1)} \left( I^{k-1}|_{S^{m-1}} (v) \right) (x - \langle x,\xi \rangle\xi , \xi) \, \D S_{\xi} \\
&= -k^{2} \left( N^{k-1}|_{S^{m-1}} (v) \right)(x),
\end{aligned}
\]
which immediately implies our lemma. 
\end{proof}
\addtocontents{toc}{\SkipTocEntry}
\subsection{Generalized Saint-Venant operator}
We now denote \tteal{$S^{m_{1}} \otimes S^{m_{2}}$} the set of $(m_{1}+m_{2})$-tensors symmetric with respect to  first $m_{1}$ indices and last $m_{2}$ indices. Accordingly, we introduce the generalized Saint-Venant operator as in \cite[(2.8)]{DVS_UCP_2022}, which is a generalization of curl on $\mathbb{R}^{n}$, see Remark~{\rm \ref{rem:curl-generalization}}. 

\begin{definition}\label{def:GSV}
For $m \in \mathbb{Z}_{\ge 0}$ and integer $0 \le k \le m$, the $k^{\rm th}$ \emph{generalized Saint-Venant operator} $W^{k} : \mathscr{D}'(\mathbb{R}^{n}; S^{m}) \to \mathscr{D}'(\mathbb{R}^{n}; S^{m-k} \otimes S^{m})$ is defined as
\begin{equation}
\begin{aligned}
& (W^{k} f)_{p_{1} \cdots p_{m-k} q_{1} \cdots q_{m-k}}^{i_{1} \cdots i_{k}} \\
& \quad \coloneqq \sigma(p_{1}, \cdots , p_{m-k}) \sigma(q_{1}, \cdots , q_{m-k}, i_{1} \dots i_{k}) \times \\
& \quad\qquad \times \sum_{\ell=0}^{m-k} (-1)^\ell \binom{m-k}{\ell} \frac{\partial^{m-k} f^{i_{1} \cdots i_k}_{p_1 \dots p_{m-k-\ell} q_1 \cdots q_{\ell}}}{\partial x^{p_{m-k-\ell+1}} \cdots \partial x^{p_{m-k}} \partial x^{q_{\ell+1}} \cdots \partial x^{q_{m-k}}}. 
\end{aligned}\label{GSV}
\end{equation}
\end{definition}
Since we work with the Euclidean metric, we will not distinguish between covariant tensors and contravariant tensors, i.e. there are no difference between upper and lower indices. We choose these notations just for own convenience while proving Proposition~3.1 in the next section.
When $k = 0$, the $0^{\rm th}$ generalized Saint-Venant operator $W^{0}$ reduces to the (classical) Saint-Venant operator on $S^{m}$ \cite[(2.8)]{Sharafutdinov_Book}. When $k=m$, we have $W^{m} = \mathbb{I}$, where 
$\mathbb{I}$ is the identity operator. 

\begin{remark}
[An equivalent definition]\label{eq:equivalence-SV-operator}
For each $m \in \mathbb{Z}_{\ge 0}$ and integer $0 \le k \le m$, we consider the operator \tteal{$R^{k} \equiv R^{k}|_{S^{m}} : \mathscr{D}'(\mathbb{R}^{n};S^{m}) \rightarrow \mathscr{D}'(\mathbb{R}^{n};T^{2m-k})$} given by 
\begin{equation}\label{GSV-equivalent}
(R^{k} f)_{p_{1} q_{1} \dots p_{m-k} q_{m-k}}^{i_{1} \dots i_{k}} := \alpha (p_{1} q_{1}) \dots \alpha (p_{m-k} q_{m-k}) \frac{\partial^{m-k} f^{i_{1} \dots i_{k}}_{p_{1} \dots p_{m-k}}}{\partial x^{q_1} \dots \partial x^{q_{m-k}}} 
\end{equation}
where the \emph{alternation of two indices} is defined as:
\[
\alpha(i_{1}i_{2}) u_{i_{1}i_{2}j_{1}\cdots j_{p}}:= \frac{1}{2} (u_{i_{1}i_{2}j_{1}\cdots j_{p}} - u_{i_{2}i_{1}j_{1}\cdots j_{p}}).
\]
In particular, one has 
\begin{equation}
\left( R^{k}|_{S^{m}} f \right)_{p_1 q_1 \dots p_{m-k} q_{m-k}}^{i_{1} \dots i_{k}} = \left( R^{0}|_{S^{m-k}} f^{i_{1}\cdots i_{k}} \right)_{p_{1}q_{1} \cdots p_{m-k}q_{m-k}}, \label{eq:reduction-GSV}
\end{equation}
see \cite[(2.6)]{DVS_UCP_2022}. For each $m \in \mathbb{Z}_{\ge 0}$ and integer $0 \le k \le m$, one can show that\footnote{We again remind readers about the typos in \cite[(2.4.6)(2.4.7)]{Sharafutdinov_Book}, see \cite[(2.6)(2.7)]{DVS_UCP_2022} for corrected statement.} 
\[
\begin{aligned}
&(W^{k}f)_{p_{1}\cdots p_{m-k} q_{1}\cdots q_{m-k}}^{i_{1}\cdots i_{k}} \\
&\quad = 2^{m-k} \sigma(q_{1} \cdots q_{m-k} i_{1} \cdots i_{k}) \sigma(p_{1} \cdots p_{m-k}) (R^{k}f)_{p_{1}q_{1} \cdots p_{m-k}q_{m-k}}^{i_{1}\cdots i_{k}},
\end{aligned}
\]
and
\[
\begin{aligned}
&(R^{k}f)_{p_{1}q_{1} \cdots p_{m-k}q_{m-k}}^{i_{1}\cdots i_{k}} \\
&\quad = \frac{1}{m-k+1} \binom{m}{k} \alpha(p_{1}q_{1}) \cdots \alpha(p_{m-k}q_{m-k}) (W^{k}f)_{p_{1}\cdots p_{m-k} q_{1}\cdots q_{m-k}}^{i_{1}\cdots i_{k}},
\end{aligned}
\]
see \cite[Lemma~4.1]{DVS_UCP_2022}. Hence, for each open set $U \subset \mathbb{R}^{n}$ and $f \in \mathscr{D}'(\mathbb{R}^{n};S^{m})$ there holds 
\begin{equation}
W^{k}f =0 \text{ in $U$} \iff R^{k}f =0 \text{ in $U$.} \label{eq:GSV-equivalence-UCP}
\end{equation}
Based on the above observation, we can slightly abuse the terminology by also referring \eqref{GSV-equivalent} the $k^{\rm th}$ generalized Saint-Venant operator. In view of the reduction formula \eqref{eq:reduction-GSV}, it is more convenient to work with $R^{k}$ rather than $W^{k}$. 
\end{remark}

\begin{remark}\label{rem:curl-generalization}
By identifying $\mathscr{D}'(\mathbb{R}^{n};S^{1}) \cong \left( (\mathscr{D}'(\mathbb{R}^{n})) \right)^{n}$, when $m=1$ and $k=0$, we see that 
\begin{equation}
\left (R^{0}|_{S^{1}}f \right)_{p,q} = \alpha(p,q) \frac{\partial f_{p}}{\partial x^{q}} = \frac{1}{2} \left( \frac{\partial f_{p}}{\partial x^{q}} - \frac{\partial f_{q}}{\partial x^{p}} \right) \equiv \frac{1}{\sqrt{2}} \left( {\rm curl}\,(f) \right)_{p,q} \label{eq:curl-Rn}
\end{equation}
for all $f \in \mathscr{D}'(\mathbb{R}^{n};S^{1}) \cong \left( \mathscr{D}'(\mathbb{R}^{n}) \right)^{n}$, see also Remark~{\rm \ref{rem:curl-curl}} for more details about \eqref{eq:curl-Rn}. 
\end{remark}

For $u \in S^{k}$, we denote by $\mathfrak{i}_{u} : S^{m} \rightarrow S^{m+k}$ the operator of symmetric multiplication by $u$ and by $\mathfrak{j}_{u} : S^{m+k} \rightarrow S^{m}$ the corresponding dual operator, 
 and defined as
\[
\begin{aligned}
\left( \mathfrak{i}_{u}v \right)_{i_{1} \cdots i_{m+k}} &:= \sigma(i_{1} \cdots i_{m+k}) u_{i_{1} \cdots i_{k}} v_{i_{k+1} \cdots i_{k+m}} \quad \text{for all $v \in S^{m}$}, \\
\tteal{\left( \mathfrak{j}_{u}w \right)_{i_{1} \cdots i_{m}}} &:= w_{i_{1} \cdots i_{m+k}} u^{i_{m+1} \cdots i_{m+k}} \quad \text{for all $w \in S^{m+k}$,}
\end{aligned}
\]
see \cite[(2.1.5)]{Sharafutdinov_Book}. Let $\mathfrak{e}^{k} \in S^{k}$ be the Euclidean metric tensor, given by 
\[
\mathfrak{e}_{i_{1} \cdots i_{k}}^{k} = \begin{cases}1 & \text{if $i_{1} = \cdots = i_{k}$,} \\ 0 & \text{otherwise,}\end{cases}
\]
and we write $\mathfrak{i}_{(k)} := \mathfrak{i}_{\mathfrak{e}^{k}}, \quad \mathfrak{j}_{(k)} := \mathfrak{j}_{\mathfrak{e}^{k}}$. We end this section by recalling \cite[Proposition~4.4]{DVS_UCP_2022}, which gives a connection between \tteal{the} normal operator of momentum ray transform and \tteal{the} generalized Saint-Venant operator. 

\begin{proposition}[{\cite[Proposition 4.4]{DVS_UCP_2022}}]  \label{MRT:Prop} 
Given $m \in \mathbb{Z}_{\ge 0}$ and integer $0 \le k \le m$, we consider $f \in \Ec'(\mathbb{R}^{n}; S^{m}) \cup \mathcal{S}(\mathbb{R}^{n};S^{m})$. If $N^{0}|_{S^{0}}$ be the operator given in \eqref{eq:normal_operator_S0}, then there holds
\begin{equation}
\begin{split}\label{key_equation_mrt}
& m! N^{0}|_{S^{0}} \left( (R^{0}|_{S^{m-k}} f^{i_{1} \cdots i_{k}})_{p_{1} q_{1} \cdots p_{m-k} q_{m-k}} \right) \\
& \quad = \sigma(i_{1} \dots i_{k}) \sum\limits_{r=0}^{k} (-1)^{r} \binom{k}{r} \frac{\partial^{r}}{\partial x^{i_1} \cdots \partial x^{i_r} }  (R^{k}|_{S^{m-r}} \left( G_{m-r}) \right)_{p_{1} q_{1} \cdots p_{m-k} q_{m-k}}^{i_{r+1} \cdots i_{k}},
\end{split}
\end{equation}
where $G_{m-r}$ is a symmetric $(m-r)$-tensor given by 
\[
\begin{aligned}
G_{m-r} = \sum\limits_{\ell=0}^{\lfloor \frac{m-r}{2} \rfloor} c_{\ell, m-r} \mathfrak{i}_{(2)}^{\ell} \mathfrak{j}_{(2)}^{\ell} \sum\limits_{p=0}^{r} (-1)^{r-p} \frac{1}{p!} \binom{r}{p}  \mathfrak{j}_{x^{\otimes (r-p)}} \delta^{p} N^{p} f,
\end{aligned}
\]
with coefficients 
\[
c_{\ell,s} = \left( \prod_{p=0}^{s-\ell-1}(n-1+2p) \right) \frac{(-1)^{\ell}s!}{2^{\ell} \ell ! (s-2\ell)!},
\]
where $N^{p}$ is the normal operator of the \tteal{$p^{\rm th}$} momentum ray transform given in \eqref{eq:normal_operator} and for each $\alpha \in \mathbb{R}$ the ``floor'' $\lfloor\alpha\rfloor$ denotes the largest integer with $\le \alpha$. 
\end{proposition}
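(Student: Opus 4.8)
The plan is to reduce both sides of \eqref{key_equation_mrt} to spherical integrals of the extended momentum ray transforms $J^{r}f$ and then to match them term by term; the generalized Saint-Venant operators serve to package the gauge-invariant content carried by the normal operators $N^{p}f$. Throughout one may work with $R^{k}$ in place of $W^{k}$ by \eqref{eq:GSV-equivalence-UCP}, and reduce every tensorial Saint-Venant operator to the scalar operator $R^{0}$, acting on fixed components, via \eqref{eq:reduction-GSV}.

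First I would simplify the tensor $G_{m-r}$. Setting $k=p$ and $r=p$ in the divergence formula \eqref{eq:divergence-normal-operator} gives $\tfrac{1}{p!}\delta^{p}N^{p}f=\int_{\Sn}\xi^{\otimes(m-p)}(I^{p}f)(x-\langle x,\xi\rangle\xi,\xi)\,\D S_{\xi}$, and contracting $r-p$ of the free indices against $x$ yields $\mathfrak{j}_{x^{\otimes(r-p)}}\tfrac{1}{p!}\delta^{p}N^{p}f=\int_{\Sn}\langle x,\xi\rangle^{r-p}\xi^{\otimes(m-r)}(I^{p}f)(x-\langle x,\xi\rangle\xi,\xi)\,\D S_{\xi}$. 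Summing against $(-1)^{r-p}\binom{r}{p}$ and using the restriction of \eqref{eq:relation between Ik and Jk} to $\Sn$, namely $(J^{r}f)(x,\xi)=\sum_{p=0}^{r}(-1)^{r-p}\binom{r}{p}\langle x,\xi\rangle^{r-p}(I^{p}f)(x-\langle x,\xi\rangle\xi,\xi)$, the inner sum defining $G_{m-r}$ collapses, so that $G_{m-r}=\sum_{\ell}c_{\ell,m-r}\,\mathfrak{i}_{(2)}^{\ell}\mathfrak{j}_{(2)}^{\ell}\int_{\Sn}\xi^{\otimes(m-r)}(J^{r}f)(x,\xi)\,\D S_{\xi}$. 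This exhibits $G_{m-r}$ as a trace-adjusted spherical moment of $J^{r}f$ and is the step that fixes, via the combinatorial factors $(-1)^{r-p}\tfrac{1}{p!}\binom{r}{p}$, how the several divergences $\delta^{p}N^{p}f$ combine.

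Next I would write the left-hand side as a spherical integral as well. Since $N^{0}|_{S^{0}}$ acts componentwise as the Riesz potential of \eqref{eq:normal_operator_S0}, the projection-slice identity $N^{0}|_{S^{0}}g=\int_{\Sn}(I^{0}g)(x-\langle x,\xi\rangle\xi,\xi)\,\D S_{\xi}$ for scalars $g$, together with \eqref{eq:reduction-GSV}, turns the left-hand side of \eqref{key_equation_mrt} into $m!\int_{\Sn}\big(I^{0}(R^{0}|_{S^{m-k}}f^{i_{1}\cdots i_{k}})_{p_{1}q_{1}\cdots p_{m-k}q_{m-k}}\big)(x-\langle x,\xi\rangle\xi,\xi)\,\D S_{\xi}$. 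The core of the proof is then the identity, valid after spherical integration, relating the ray transform of these Saint-Venant components to the moments $\int_{\Sn}\xi^{\otimes(m-r)}(J^{r}f)\,\D S_{\xi}$ acted on by $\partial^{r}/\partial x^{i_{1}}\cdots\partial x^{i_{r}}$ and by the lower-order Saint-Venant operator carrying the remaining upper indices $i_{r+1}\cdots i_{k}$. The $m-k$ alternated derivatives $\alpha(p_{i}q_{i})\partial_{q_{i}}$ of \eqref{GSV-equivalent} redistribute, along each ray, into $t$-weights recorded by the momenta $J^{r}f$ and into base-point derivatives $\partial^{r}$; the scaling relation \eqref{eq:scaling-MRT} and Lemma \ref{lem:normal-operator-derivative-commute} govern this redistribution, while the factors $(-1)^{r}\binom{k}{r}$ and the symmetrization $\sigma(i_{1}\cdots i_{k})$ record the way the derivatives and the upper indices are apportioned.

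The main obstacle is precisely this last matching, and within it the determination of the trace coefficients $c_{\ell,s}$. Two mechanisms must be reconciled simultaneously: the metric corrections $\mathfrak{i}_{(2)}^{\ell}\mathfrak{j}_{(2)}^{\ell}$ that appear when one passes between the scalar operator $N^{0}|_{S^{0}}$ and the genuinely tensorial normal operator \eqref{eq:normal_operator}, and the commutation of the alternations $\alpha(p_{i}q_{i})$ with spherical integration. I expect the cleanest organization is an induction on $k$. The base case $k=0$ reduces, since $N^{0}|_{S^{0}}$ commutes with the constant-coefficient operator $R^{0}$, to the trace-decomposition statement that $\sum_{\ell}c_{\ell,m}\mathfrak{i}_{(2)}^{\ell}\mathfrak{j}_{(2)}^{\ell}(N^{0}|_{S^{m}}f)$ and the tensor with components $m!\,N^{0}|_{S^{0}}(f_{i_{1}\cdots i_{m}})$ differ by an element of $\ker R^{0}$; one verifies this by writing the kernel $z^{\otimes 2m}/|z|^{2m+n-1}$ of $N^{0}|_{S^{m}}$ as iterated second derivatives $\partial_{p}\partial_{j}$ of powers of $|z|$ modulo Euclidean-metric terms, each reduction contributing a factor $n-1+2p$, which is exactly how the product $\prod_{p=0}^{s-\ell-1}(n-1+2p)$ in $c_{\ell,s}$ is produced, the gradient remainders being annihilated by $R^{0}$. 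The inductive step passes from level $k$ to level $k-1$ using \eqref{eq:scaling-MRT} and Lemma \ref{lem:normal-operator-derivative-commute}, with $\partial^{r}$ and $\binom{k}{r}$ emerging from differentiating the weight $\langle x,\xi\rangle^{k}$. The entire difficulty is the bookkeeping of the symmetric-tensor algebra $\mathfrak{i}_{(2)},\mathfrak{j}_{(2)},\sigma,\alpha$, and checking that all combinatorial constants coincide.
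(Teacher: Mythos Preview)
The paper does not prove this proposition: it is quoted verbatim as \cite[Proposition~4.4]{DVS_UCP_2022} and used as a black box in the proof of Theorem~\ref{mrt_ucp}. There is therefore no in-paper argument to compare your proposal against.

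As for the proposal itself, your first reduction is sound: the identification of the inner sum in $G_{m-r}$ with $\int_{\Sn}\xi^{\otimes(m-r)}(J^{r}f)(x,\xi)\,\D S_{\xi}$ via \eqref{eq:divergence-normal-operator} and \eqref{eq:relation between Ik and Jk} is exactly the right simplification, and rewriting the left-hand side through $N^{0}|_{S^{0}}g=\int_{\Sn}(I^{0}g)\,\D S_{\xi}$ is also correct. But the heart of the identity---the step you call ``the main obstacle''---is left as a plan rather than an argument. Two points would need real work: (i) the base case $k=0$ is not as light as you suggest, since showing that $\sum_{\ell}c_{\ell,m}\mathfrak{i}_{(2)}^{\ell}\mathfrak{j}_{(2)}^{\ell}(N^{0}|_{S^{m}}f)$ and $m!\,N^{0}|_{S^{0}}(f_{i_{1}\cdots i_{m}})$ agree modulo $\ker R^{0}$ is precisely Sharafutdinov's computation of the trace decomposition of the kernel $z^{\otimes 2m}/|z|^{2m+n-1}$, and you would have to reproduce that to pin down the constants $c_{\ell,s}$; (ii) your inductive step invokes \eqref{eq:scaling-MRT} and Lemma~\ref{lem:normal-operator-derivative-commute}, but those relate $I^{k}$ on $\D v$ to $I^{k-1}$ on $v$, whereas here $f$ is fixed and it is the Saint-Venant side that changes order, so the passage from level $k$ to $k-1$ requires a different mechanism (differentiating the back-projection weight in \eqref{eq:distributional-adjoint-Ik-star}, as you hint, but this must be made precise). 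As written, the proposal is a reasonable roadmap but not yet a proof.
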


\begin{remark}
\begin{subequations}
In particular when $m=1$ and $k=0$, \eqref{key_equation_mrt} reads 
\begin{equation}
N^{0}|_{S^{0}} \left( \left( R^{0}|_{S^{1}}f \right)_{ij} \right) = (n-1) \left( R^{0}|_{S^{1}} \left( N^{0}|_{S^{1}}f \right) \right)_{ij}\,\,\, \text{ $\forall\,\, f \in \Ec'(\mathbb{R}^{n};S^{1}) \cup \mathcal{S}(\mathbb{R}^{n};S^{1})$.} \label{eq:key-equation_mrt-special1}
\end{equation}
Plugging \eqref{eq:curl-Rn} into \eqref{eq:key-equation_mrt-special1}, we conclude
\begin{equation}
N^{0}|_{S^{0}} \left( \left( {\rm curl}\,(f) \right)_{ij} \right) = (n-1) \left( {\rm curl}\, \left( N^{0}|_{S^{1}}f \right) \right)_{ij} \quad \text{ $\forall\,\, f \in \left( \Ec'(\mathbb{R}^{n}) \right)^{n} \cup \left( \mathcal{S}(\mathbb{R}^{n}) \right)^{n}$.} \label{eq:key-equation_mrt-special2}
\end{equation}
\end{subequations}
\end{remark}

\section{A generalization of the curl-curl identity\label{sec:saint Venant}}

We shall prove the following useful property of \tteal{the} generalized Saint-Venant operator (Definition~{\rm \ref{def:GSV}}), which is new  based on our knowledge. As an application, we also prove a smoothing property of \tteal{the} generalized Saint-Venant operator in Lemma~{\rm \ref{lem:generalized_saint_venant_local}}. 

\begin{proposition} \label{prop:delta_R_relation}
Given any integer $m \ge 0$, the identity
\begin{equation}
\begin{aligned}
& \frac{\partial^{\ell}}{\partial x^{j_{1}} \cdots \partial x^{j_{\ell}}} \left( R^{0}|_{S^{m}}f \right)_{i_{1}j_{1}\cdots i_{\ell}j_{\ell} i_{\ell+1} j_{\ell+1} \cdots i_{m} j_{m}} \\
&\quad = \frac{1}{2^\ell}\sigma(i_{1} \cdots i_{\ell}) \sum\limits_{p=0}^{\ell} \,\binom{\ell}{p} (-1)^p \frac{\partial^{p}}{\partial x^{i_{1}} \cdots \partial x^{i_{p}}} \Delta^{\ell-p} \left( R^{0}|_{S^{m-\ell}} \left( (\delta^p f)^{i_{p+1} \cdots i_{\ell}} \right) \right)_{i_{\ell+1}j_{\ell+1}\cdots i_{m} j_{m}}
\end{aligned} \label{delta_R_relation} 
\end{equation}
holds true for all $f\in \mathscr{D}'(\mathbb{R}^{n}; S^m)$ and integer $0\le \ell \le m$.
\end{proposition}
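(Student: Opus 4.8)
The plan is to induct on $\ell$, the key tool being a single ``block-peeling'' identity that trades one derivative for a power of $\Delta$ and a divergence while lowering the tensor order by one. Concretely, I would first establish that for every $g\in\mathscr{D}'(\mathbb{R}^{n};S^{s})$ with $s\ge 1$,
\[
\frac{\partial}{\partial x^{j_{1}}}\big(R^{0}|_{S^{s}}g\big)_{i_{1}j_{1}i_{2}j_{2}\cdots i_{s}j_{s}}=\frac{1}{2}\,\Delta\big(R^{0}|_{S^{s-1}}(g^{i_{1}})\big)_{i_{2}j_{2}\cdots i_{s}j_{s}}-\frac{1}{2}\,\frac{\partial}{\partial x^{i_{1}}}\big(R^{0}|_{S^{s-1}}(\delta g)\big)_{i_{2}j_{2}\cdots i_{s}j_{s}},
\]
where $j_{1}$ is summed and $g^{i_{1}}$ is the symmetric $(s-1)$-tensor $(g^{i_{1}})_{a_{1}\cdots a_{s-1}}:=g_{i_{1}a_{1}\cdots a_{s-1}}$. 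This is proved directly from \eqref{GSV-equivalent}: expand $R^{0}|_{S^{s}}g$, split off $\alpha(i_{1}j_{1})$ so that $\partial/\partial x^{j_{1}}$ either lands on the repeated derivative slot (producing $\Delta$) or is moved to the $x^{i_{1}}$ slot; in the latter case contracting the remaining derivative index against $g$ produces $\delta g$ (cf.\ \eqref{def_of_delta}), and the surviving alternations $\alpha(i_{2}j_{2})\cdots\alpha(i_{s}j_{s})$ reassemble $R^{0}|_{S^{s-1}}$. After the trivial $\sigma(i_{1})$ this is exactly \eqref{delta_R_relation} for $\ell=1$, so it also serves as the base case (the case $\ell=0$ being tautological).

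For the inductive step I would assume \eqref{delta_R_relation} for $\ell$ (all tensor orders), write $\frac{\partial^{\ell+1}}{\partial x^{j_{1}}\cdots\partial x^{j_{\ell+1}}}=\frac{\partial^{\ell}}{\partial x^{j_{2}}\cdots\partial x^{j_{\ell+1}}}\circ\frac{\partial}{\partial x^{j_{1}}}$, apply the block-peeling identity to the inner $\partial_{j_{1}}$, and then apply the induction hypothesis (with $\ell$ derivatives, valid since $\ell\le m-1$) to each of the two terms $R^{0}|_{S^{m-1}}(f^{i_{1}})$ and $R^{0}|_{S^{m-1}}(\delta f)$. Here one uses that $\Delta$ and all $\partial_{i_{a}}$ commute past the remaining derivatives, that $\delta^{p}(f^{i_{1}})=(\delta^{p}f)^{i_{1}}$ (symmetry of $f$), and that $\delta^{q}(\delta f)=\delta^{q+1}f$. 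Re-indexing the sum coming from the $\delta f$-term by $p\mapsto p+1$, one is left with two sums over $p$ whose summands carry the coefficients $\binom{\ell}{p}(-1)^{p}$ and $\binom{\ell}{p-1}(-1)^{p}$ respectively, each attached to an expression of the shape $\partial^{p}_{i_{\bullet}}\Delta^{\ell+1-p}\big(R^{0}|_{S^{m-1-\ell}}((\delta^{p}f)^{i_{\bullet}})\big)_{i_{\ell+2}j_{\ell+2}\cdots i_{m}j_{m}}$, but with the $p$ ``derivative'' indices and the $\ell+1-p$ ``frozen'' indices distributed differently in the two sums.

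The reconciliation of these two sums is the main obstacle. The left-hand side of \eqref{delta_R_relation} is symmetric under permutations of $i_{1},\dots,i_{\ell+1}$, since interchanging two blocks $(i_{a},j_{a})\leftrightarrow(i_{b},j_{b})$ leaves $R^{0}|_{S^{m}}f$ invariant ($f$ being symmetric) while the matching derivative indices are summed; hence I may apply $\sigma(i_{1}\cdots i_{\ell+1})$ to the whole identity, which absorbs the partial symmetrizations $\sigma(i_{2}\cdots i_{\ell+1})$ produced by the induction hypothesis. Once the full symmetrization is in front, an expression of the above shape depends only on the \emph{number} $p$ of derivative indices, not on which $p$-subset of $\{i_{1},\dots,i_{\ell+1}\}$ is chosen (any two such subsets are related by a permutation of the $i$'s). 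Therefore the two sums align term by term, Pascal's rule $\binom{\ell}{p}+\binom{\ell}{p-1}=\binom{\ell+1}{p}$ produces the desired coefficients, the endpoints $p=0$ and $p=\ell+1$ match because $\binom{\ell}{0}=\binom{\ell+1}{0}$ and $\binom{\ell}{\ell}=\binom{\ell+1}{\ell+1}$, and the two factors $\tfrac12$ from the block-peeling identity combine with $2^{-\ell}$ to give $2^{-(\ell+1)}$.

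The degenerate cases ($\ell=m$, where $R^{0}|_{S^{0}}$ is the identity on scalars, and $m=0$) are immediate. I expect the only genuine care to be needed in verifying that freezing an index, taking divergences, and differentiating interact exactly as the notation suggests, and that the full symmetrization really does collapse the two families of terms so that Pascal's identity can be invoked; the analytic content is negligible, since everything is a finite sum of constant-coefficient differential operators applied to $f\in\mathscr{D}'(\mathbb{R}^{n};S^{m})$.
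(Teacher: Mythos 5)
Your proposal is correct and follows essentially the same route as the paper: the $\ell=1$ block-peeling (curl-curl type) identity as base case, induction on $\ell$, full symmetrization $\sigma(i_{1}\cdots i_{\ell+1})$ to absorb the partial symmetrizations and align the two families of terms, and Pascal's rule to merge the binomial coefficients. The only cosmetic difference is that you peel off $\partial/\partial x^{j_{1}}$ before invoking the induction hypothesis, whereas the paper applies the extra derivative $\partial/\partial x^{j_{\ell+1}}$ after it; the two computations are mirror images.
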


\begin{remark} \label{rem:curl-curl}
\tteal{When $m=\ell=1$, \eqref{delta_R_relation} reduces to the following well-known curl-curl identity} 
\begin{equation}
\Delta f^{i} - \frac{\partial}{\partial x^{i}} (\delta f) = 2 \frac{\partial}{\partial x^{j}} (R^{0}|_{S^{1}}f)_{i,j} = \frac{\partial}{\partial x^{j}} \left( \frac{\partial f_{i}}{\partial x^{j}} - \frac{\partial f_{j}}{\partial x^{i}} \right) = - \left( {\rm curl}^{\intercal} \, {\rm curl} \, (f) \right)_{i}, \label{eq:curl-curl-formula}
\end{equation}
where $
\left( {\rm curl}^{\intercal}\, (g) \right)_{i} := \frac{1}{\sqrt{2}} \frac{\partial}{\partial x^{j}} (g_{i,j} - g_{j,i})$
is the \emph{formal transpose of curl}. In 3-dimensional case, one even can reduce \eqref{eq:curl-curl-formula} to 
\begin{equation}
\Delta f - \nabla (\nabla \cdot f) = - \nabla \times (\nabla \times f) \quad \text{for all $f \in \left( \mathscr{D}'(\mathbb{R}^{3}) \right)^{3}$,}
\end{equation}
where $\nabla \times \cdot : \mathscr{D}'(\mathbb{R}^{3}) \rightarrow \mathscr{D}'(\mathbb{R}^{3})$ is the usual curl on $\mathbb{R}^{3}$.  
\end{remark}

\begin{proof}[Proof of Proposition~{\rm \ref{prop:delta_R_relation}}]
The identity \eqref{delta_R_relation} is trivial when $\ell=0$, we only need to prove \eqref{delta_R_relation} for $\ell \ge 1$. 
\smallskip

\noindent \textbf{Step 1: Basic case.} 
We first prove \eqref{delta_R_relation} when $\ell=1$. Given any integer $m \ge 1$, by applying the divergence operator $\delta$ on the classical Saint-Venant operator $R^{0} : \mathscr{D}'(\mathbb{R}^{n}; S^{m}) \rightarrow \mathscr{D}'(\mathbb{R}^{n}; T^{2m})$, we obtain
\begin{equation}
\begin{aligned}
& 2 \frac{\partial}{\partial x^{j_{1}}} \left( R^{0}|_{S^{m}}f \right)_{i_{1}j_{1} i_{2}j_{2} \cdots i_{m}j_{m}} \\
& \quad = 2 \frac{\partial}{\partial x^{j_{1}}} \left( \alpha(i_{1} j_{1}) \cdots \alpha(i_{m} j_{m}) \frac{\partial^{m}}{\partial x^{j_{1}}\cdots \partial x^{j_{m}}} f_{i_{1} i_{2} \cdots i_{m}} \right) \\
& \quad =  \Delta \alpha(i_{2} j_{2}) \cdots \alpha(i_{m} j_{m})\frac{\partial^{m-1}}{\partial x^{j_{2}}\cdots \partial x^{j_{m}}} f_{i_{2} \cdots i_{m}}^{i_{1}} \\
& \quad \qquad - \frac{\partial}{\partial x^{i_{1}}} \alpha(i_{2} j_{2}) \cdots \alpha(i_{m} j_{m})\frac{\partial^{m-1}}{ \partial x^{j_{2}}\cdots \partial x^{j_{m}}} (\delta f)_{i_{2} \cdots i_{m}}  \\
& \quad = \left( \Delta R^{0}|_{S^{m-1}} f^{i_{1}} - \frac{\partial}{\partial x^{i_{1}}} R^{0}|_{S^{m-1}}(\delta f) \right)_{i_{2} j_{2} \cdots i_{m} j_{m}} 
\end{aligned} \label{eq:divergence-R-order1}
\end{equation}
for all $f\in \mathscr{D}'(\mathbb{R}^{n}; S^m)$. \smallskip

\noindent \textbf{Step 2: Induction on $\ell$.} Assume that there exists an integer $\ell \ge 1$ such that \eqref{delta_R_relation} holds true for all $m \ge \ell$. Taking the derivative $\frac{\partial}{\partial x^{j_{\ell+1}}}$ on the induction hypothesis gives 
\begin{equation}
\begin{aligned}
& \qquad 2^{\ell} \frac{\partial^{\ell+1}}{\partial x^{j_{1}} \cdots \partial x^{j_{\ell+1}}} \left( R^{0}|_{S^{m}}f \right)_{i_{1}j_{1}\cdots i_{\ell+1}j_{\ell+1} i_{\ell+2} j_{\ell+2} \cdots i_{m} j_{m}} \\
& = \sigma(i_{1} \cdots i_{\ell+1}) 2^{\ell} \frac{\partial^{\ell+1}}{\partial x^{j_{1}} \cdots \partial x^{j_{\ell+1}}} \left( R^{0}|_{S^{m}}f \right)_{i_{1}j_{1}\cdots i_{\ell+1}j_{\ell+1} i_{\ell+2} j_{\ell+2} \cdots i_{m} j_{m}} \\
& = \sigma(i_{1} \cdots i_{\ell+1}) \sum\limits_{p=0}^{\ell} \,\binom{\ell}{p} (-1)^p \frac{\partial^{p}}{\partial x^{i_{1}} \cdots \partial x^{i_{p}}} \Delta^{\ell-p} \frac{\partial}{\partial x^{j_{\ell+1}}} \left( R^{0}|_{S^{m-\ell}} \left( (\delta^p f)^{i_{p+1}\cdots i_{\ell}} \right) \right)_{i_{\ell+1}j_{\ell+1}\cdots i_{m} j_{m}}, 
\end{aligned} \label{eq:divergence-R-induction-ell-1}
\end{equation}
here we utilize the fact that $\sigma(i_{1} \cdots i_{\ell+1}) \sigma(i_{1} \cdots i_{\ell}) = \sigma(i_{1} \cdots i_{\ell+1})$. Similar to \eqref{eq:divergence-R-order1}  \tteal{after a re-indexing}, we have
\begin{align*}
 & (\delta^{p+1}f)_{i_{\ell+2}\cdots i_{m}}^{i_{p+1} \cdots i_{\ell}} = (\delta^{p+1}f)_{i_{\ell+2}\cdots i_{m}}^{i_{p+2} \cdots i_{\ell+1}} \quad \text{(because $\delta^{p+1}f \in C^{\infty}(\mathbb{R}^{n}; S^{m-(p+1)})$),}\\
   &\frac{\partial}{\partial x^{i_{\ell+1}}} R^{0}|_{S^{m-(\ell+1)}} \left (\delta^{p+1}f \right)^{i_{p+2} \cdots i_{\ell+1}} = \frac{\partial}{\partial x^{i_{p+1}}} R^{0}|_{S^{m-(\ell+1)}} \left (\delta^{p+1}f \right)^{i_{p+2} \cdots i_{\ell+1}}.
\end{align*}
This implies
\begin{equation}
\begin{aligned}
& 2 \frac{\partial}{\partial x^{j_{\ell+1}}} \left( R^{0}|_{S^{m-\ell}} \left( (\delta^p f)^{i_{p+1}\cdots i_{\ell}} \right) \right)_{i_{\ell+1}j_{\ell+1}\cdots i_{m} j_{m}} \\
& \quad = 2 \frac{\partial}{\partial x^{j_{\ell+1}}} \alpha(i_{\ell + 1}j_{\ell + 1}) \cdots \alpha(i_{m}j_{m}) \frac{\partial^{m-\ell}}{\partial x^{j_{\ell+1}} \cdots \partial x^{j_{m}}} \left( \delta^{p}f \right)_{i_{\ell+1} \cdots i_{m}}^{i_{p+1}\cdots i_{\ell}} \\
& \quad = \Delta \alpha(i_{\ell+2}j_{\ell+2}) \cdots \alpha(i_{m}j_{m}) \frac{\partial^{m-(\ell+1)}}{\partial x^{j_{\ell+2}} \cdots \partial x^{j_{m}}} (\delta^{p}f)_{i_{\ell+2} \cdots i_{m}}^{i_{p+1} \cdots i_{\ell} i_{\ell+1}} \\
& \quad \qquad - \frac{\partial}{\partial x^{i_{\ell+1}}} \alpha(i_{\ell+2}j_{\ell+2}) \cdots \alpha(i_{m}j_{m}) \frac{\partial^{m-(\ell+1)}}{\partial x^{j_{\ell+2}} \cdots \partial x^{j_{m}}} (\delta^{p+1}f)_{i_{\ell+2}\cdots i_{m}}^{i_{p+2} \cdots i_{\ell+1}} \\
& \quad = \left( \Delta R^{0}|_{S^{m-(\ell+1)}} \left (\delta^{p}f \right)^{i_{p+1} \cdots i_{\ell+1}} - \frac{\partial}{\partial x^{i_{p+1}}} R^{0}|_{S^{m-(\ell+1)}} \left (\delta^{p+1}f \right)^{i_{p+2} \cdots i_{\ell+1}} \right)_{i_{\ell+2}j_{\ell+2} \cdots i_{m} j_{m}}.
\end{aligned} \label{eq:divergence-R-induction-ell-2}
\end{equation}
The displayed relations between $3.4$ and $3.5$ can be deduced just from the definition of symmetric tensor fields. Since $ \delta^{p+1} f$ is a symmetric tensor field, we only need to count the indices. Note that,    $(\delta^{p+1}f)_{i_{\ell+2}\cdots i_{m}}^{i_{p+1} \cdots i_{\ell}}$, $i_{p+1}$ plays the role of $ i_{\ell+1}$.
We now combine \eqref{eq:divergence-R-induction-ell-1} and \eqref{eq:divergence-R-induction-ell-2} to obtain 
\begin{equation*}
\begin{aligned}
& \qquad 2^{\ell+1} \frac{\partial^{\ell+1}}{\partial x^{j_{1}} \cdots \partial x^{j_{\ell+1}}} \left( R^{0}|_{S^{m}}f \right)_{i_{1}j_{1}\cdots i_{\ell+1}j_{\ell+1} i_{\ell+2} j_{\ell+2} \cdots i_{m} j_{m}} \\ 
& = \sigma(i_{1} \cdots i_{\ell+1}) \sum_{p=0}^{\ell} \binom{\ell}{p} (-1)^{p} \frac{\partial^{p}}{\partial x^{i_{1}} \cdots \partial x^{i_{p}}} \Delta^{(\ell+1)-p} \left( R^{0}|_{S^{m-(\ell+1)}} \left (\delta^{p}f \right)^{i_{p+1} \cdots i_{\ell+1}} \right)_{i_{\ell+2}j_{\ell+2}\cdots i_{m}j_{m}} \\
& \quad - \sigma(i_{1} \cdots i_{\ell+1}) \sum_{p=0}^{\ell} \binom{\ell}{p} (-1)^{p} \frac{\partial^{p+1}}{\partial x^{i_{1}} \cdots \partial x^{i_{p+1}}} \Delta^{\ell-p} \left( R^{0}|_{S^{m-(\ell+1)}} \left (\delta^{p+1}f \right)^{i_{p+2} \cdots i_{\ell+1}} \right)_{i_{\ell+2}j_{\ell+2} \cdots i_{m} j_{m}} \\
& = \sigma(i_{1} \cdots i_{\ell+1}) \sum_{p=0}^{\ell} \binom{\ell}{p} (-1)^{p} \frac{\partial^{p}}{\partial x^{i_{1}} \cdots \partial x^{i_{p}}} \Delta^{(\ell+1)-p} \left( R^{0}|_{S^{m-(\ell+1)}} \left (\delta^{p}f \right)^{i_{p+1} \cdots i_{\ell+1}} \right)_{i_{\ell+2}j_{\ell+2}\cdots i_{m}j_{m}} \\
& \quad + \sigma(i_{1} \cdots i_{\ell+1}) \sum_{p=1}^{\ell+1} \binom{\ell}{p-1} (-1)^{p} \frac{\partial^{p}}{\partial x^{i_{1}} \cdots \partial x^{i_{p}}} \Delta^{(\ell+1)-p} \left( R^{0}|_{S^{m-(\ell+1)}} \left (\delta^{p}f \right)^{i_{p+1} \cdots i_{\ell+1}} \right)_{i_{\ell+2}j_{\ell+2} \cdots i_{m} j_{m}} \\
& = \sigma(i_{1} \cdots i_{\ell+1}) \sum_{p=0}^{\ell+1} \binom{\ell+1}{p} (-1)^{p} \frac{\partial^{p}}{\partial x^{i_{1}} \cdots \partial x^{i_{p}}} \Delta^{(\ell+1)-p} \left( R^{0}|_{S^{m-(\ell+1)}} \left (\delta^{p}f \right)^{i_{p+1} \cdots i_{\ell+1}} \right)_{i_{\ell+2}j_{\ell+2} \cdots i_{m} j_{m}}.
\end{aligned}
\end{equation*}
This finishes the induction step and completes the proof.
\end{proof}
When $\ell = m$, \eqref{delta_R_relation} in Proposition~{\rm \ref{prop:delta_R_relation}} gives 
\begin{equation}
\begin{aligned}
& \frac{\partial^{m}}{\partial x^{j_{1}} \cdots \partial x^{j_{m}}} \left( R^{0}|_{S^{m}}f \right)_{i_{1}j_{1}\cdots i_{m}j_{m}} \\
&\quad = \frac{1}{2^{m}}\sigma(i_{1} \cdots i_{m}) \sum\limits_{p=0}^{m} \,\binom{m}{p} (-1)^p \frac{\partial^{p}}{\partial x^{i_{1}} \cdots \partial x^{i_{p}}} \Delta^{m-p} \left( (\delta^p f)^{i_{p+1} \cdots i_{m}} \right). 
\end{aligned} \label{eq:delta_R_relation-1}
\end{equation}
As an application of \eqref{eq:delta_R_relation-1}, we \tteal{are} now able to prove the following smoothing property of \tteal{the} generalized Saint-Venant operator (Definition~{\rm \ref{def:GSV}}), \tteal{which is also true for $W^{k}$ and is seen by replacing $R^{k}$ with $W^{k}$ due to \eqref{eq:GSV-equivalence-UCP}}. 

\begin{lemma}\label{lem:generalized_saint_venant_local}
Given $m \in \mathbb{Z}_{\ge 0}$, let $U$ be an open set in $\mathbb{R}^{n}$ and $g \in \mathscr{D}'(\mathbb{R}^{n};S^{m})$. If there exists an integer $0 \le \mathsf{k} \le m$ such that
\begin{equation}
\text{$R^{\mathsf{k}}g = 0$ and $\delta^{\mathsf{k}+1}g=0$ in $U$,} \label{eq:assumption-generalized_saint_venant_local}
\end{equation}
then $g \in C^{\infty}(U ; S^{m})$. In addition when $g \in \mathscr{E}'(\mathbb{R}^{n};S^{m})$ and  \eqref{eq:assumption-generalized_saint_venant_local} holds for $U = \mathbb{R}^{n}$, then we conclude that $g \equiv 0$ in $\mathbb{R}^{n}$. 
\end{lemma}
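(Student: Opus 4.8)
The plan is to prove the smoothness assertion by induction on $\mathsf{k}$, using the generalized curl--curl identity \eqref{eq:delta_R_relation-1} to extract a power of the Laplacian acting on $g$ whose right-hand side is built only from lower-order divergences of $g$, and then to close the induction with hypoellipticity of $\Delta^{j}$; the vanishing assertion will follow by running the same induction globally and applying a Fourier (Liouville) argument to a compactly supported polyharmonic distribution. The degenerate cases $m=0$ and $\mathsf{k}=m$ are immediate, since there $R^{\mathsf{k}}g$ is $g$ itself (so $g=0$ in $U$) while $\delta^{\mathsf{k}+1}g$ is a tensor of order $-1$, i.e.\ zero; thus one may assume $1\le m-\mathsf{k}$. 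For the base case $\mathsf{k}=0$ one feeds $R^{0}g=0$ and $\delta g=0$ (hence $\delta^{p}g=0$ for all $p\ge1$) in $U$ into \eqref{eq:delta_R_relation-1}: the left-hand side is a derivative of $R^{0}g$ and vanishes in $U$, every summand with $p\ge1$ carries a factor $\delta^{p}g$ and vanishes in $U$, and the $p=0$ summand is $\Delta^{m}g$, so $\Delta^{m}g=0$ in $U$ and hypoellipticity of $\Delta^{m}$ gives $g\in C^{\infty}(U;S^{m})$.

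For the inductive step ($1\le\mathsf{k}\le m-1$) the crucial observation is that $R^{\mathsf{k}}g=0$ in $U$ forces $R^{\mathsf{k}-1}(\delta g)=0$ in $U$: by the reduction formula \eqref{eq:reduction-GSV} both statements concern $R^{0}|_{S^{m-\mathsf{k}}}$ applied to index-frozen slices, and for a multi-index $J$ of length $\mathsf{k}-1$ one has $(\delta g)^{J}=\sum_{c}\partial_{x^{c}}(g^{Jc})$, so since $R^{0}|_{S^{m-\mathsf{k}}}$ is a constant-coefficient differential operator it commutes with each $\partial_{x^{c}}$, while $R^{0}|_{S^{m-\mathsf{k}}}(g^{Jc})=0$ in $U$ by hypothesis. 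As $\delta g$ also satisfies $\delta^{\mathsf{k}}(\delta g)=\delta^{\mathsf{k}+1}g=0$ in $U$, the inductive hypothesis applies to $\delta g\in\mathscr{D}'(\mathbb{R}^{n};S^{m-1})$ and yields $\delta g\in C^{\infty}(U;S^{m-1})$, hence $\delta^{p}g\in C^{\infty}(U)$ for every $p\ge1$. Applying \eqref{eq:delta_R_relation-1} with $m$ replaced by $m-\mathsf{k}$ to each slice $g^{I}$ with $|I|=\mathsf{k}$, the left-hand side is a derivative of $R^{0}|_{S^{m-\mathsf{k}}}(g^{I})$ and vanishes in $U$; using $\delta^{p}(g^{I})=(\delta^{p}g)^{I}$ and the symmetry of $g$, the $p=0$ summand recovers $\Delta^{m-\mathsf{k}}g$ componentwise while every $p\ge1$ summand is smooth in $U$. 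Therefore $\Delta^{m-\mathsf{k}}g$ is smooth in $U$, and hypoellipticity of $\Delta^{m-\mathsf{k}}$ completes the induction.

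For the last claim, let $g\in\mathscr{E}'(\mathbb{R}^{n};S^{m})$ with $R^{\mathsf{k}}g=0$ and $\delta^{\mathsf{k}+1}g=0$ on $\mathbb{R}^{n}$. Running the induction above with $U=\mathbb{R}^{n}$, the commutation argument gives $R^{\mathsf{k}-1}(\delta g)=0$ on $\mathbb{R}^{n}$, so inductively $\delta g\equiv0$ on $\mathbb{R}^{n}$, whence \eqref{eq:delta_R_relation-1} collapses exactly as in the base case to $\Delta^{m-\mathsf{k}}g\equiv0$ on $\mathbb{R}^{n}$. Taking Fourier transforms componentwise gives $|\xi|^{2(m-\mathsf{k})}\widehat{g}(\xi)=0$ for all $\xi$, and since $g$ has compact support $\widehat{g}$ is real-analytic (Paley--Wiener--Schwartz), so it vanishes identically and $g\equiv0$. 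The step I expect to be the main obstacle is the bookkeeping inside the inductive step: reconciling \eqref{eq:reduction-GSV} with the slice identity $(\delta g)^{J}=\sum_{c}\partial_{x^{c}}(g^{Jc})$ and with the commutation of $R^{0}$ with $\partial_{x^{c}}$, so that the lower-order divergence terms left over in \eqref{eq:delta_R_relation-1} are precisely those already known to be smooth (resp.\ to vanish); granting that, elliptic regularity of $\Delta^{m-\mathsf{k}}$ does the rest.
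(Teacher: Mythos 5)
Your proof is correct and follows essentially the same route as the paper: both reduce $R^{\mathsf{k}}g=0$ to the vanishing of $R^{0}$ on the divergences of $g$, feed this into the identity \eqref{eq:delta_R_relation-1} to express a power of $\Delta$ acting on (divergences of) $g$ in terms of higher divergences, and then bootstrap with elliptic regularity (resp.\ the Liouville/Paley--Wiener argument for compactly supported polyharmonic distributions). The only difference is organizational --- you package the paper's downward bootstrap over $k=\mathsf{k},\dots,0$ as an induction on $\mathsf{k}$ that passes from $g$ to $\delta g$ --- and unrolling your induction recovers the paper's argument.
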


\begin{proof}
From \eqref{GSV-equivalent}, we have 
\begin{equation}
\begin{aligned}
&(R^{k-1}|_{S^{m}} f)_{p_1 q_1 \dots p_{m-k+1} q_{m-k+1}}^{i_{1} \dots i_{k-1}} \\
& \quad = \alpha (p_{m-k+1} q_{m-k+1}) \frac{\partial}{\partial x^{q_{m-k+1}}} \left( \alpha (p_1 q_1) \dots \alpha (p_{m-k} q_{m-k}) \frac{\partial^{m-k} f_{p_{1} \dots p_{m-k+1}}^{i_1 \dots i_{k-1}}}{\partial x^{q_1} \dots \partial x^{q_{m-k}}} \right) \\
& \quad = \alpha (p_{m-k+1} q_{m-k+1}) \frac{\partial}{\partial x^{q_{m-k+1}}} \left( \alpha (p_1 q_1) \dots \alpha (p_{m-k} q_{m-k}) \frac{\partial^{m-k} f_{p_1 \dots p_{m-k}}^{i_{1} \dots i_{k-1} p_{m-k+1}}}{\partial x^{q_1} \dots \partial x^{q_{m-k}}} \right) \\
& \quad = \alpha (p_{m-k+1} q_{m-k+1}) \frac{\partial}{\partial x^{q_{m-k+1}}} (R^{k}|_{S^{m}} f)_{p_{1}q_{1} \cdots p_{m-k}q_{m-k}}^{i_{1}\cdots i_{k} p_{m-k+1}}
\end{aligned}
\end{equation}
for all $1 \le k \le m$ and $f \in \mathscr{D}'(\mathbb{R}^{n};S^{m})$. Therefore the first assumption in \eqref{eq:assumption-generalized_saint_venant_local} and \eqref{eq:reduction-GSV} implies 
\[
\left( R^{0}|_{S^{m-k}} g^{i_{1} \cdots i_{k}} \right)_{p_{1}q_{1} \cdots p_{m-k}q_{m-k}} = 0 \quad \text{in $U$} \quad \text{for all $0 \le k \le \mathsf{k}$,}
\]
therefore we have 
\begin{equation}
\left( R^{0}|_{S^{m-k}} (\delta^{k} g) \right)_{p_{1}q_{1} \cdots p_{m-k}q_{m-k}} = 0 \quad \text{in $U$} \quad \text{for all $0 \le k \le \mathsf{k}$.} \label{eq:assumption-generalized_saint_venant_local-consequence1}
\end{equation}
Choosing $f = \delta^{k}g$ in \eqref{eq:delta_R_relation-1}, from \eqref{eq:assumption-generalized_saint_venant_local-consequence1} and second assumption in \eqref{eq:assumption-generalized_saint_venant_local}, we see that 
\begin{equation}
\begin{aligned}
0 &= 2^{m} \frac{\partial^{m}}{\partial x^{j_{1}} \cdots \partial x^{j_{m}}} \left( R^{0}|_{S^{m-k}}(\delta^{k}g) \right)_{i_{1}j_{1} \cdots i_{m}j_{m}} \\
&= \sigma(i_{1} \cdots i_{m}) \sum_{p=0}^{\mathsf{k}-k} \binom{m}{p} (-1)^{p} \frac{\partial^{p}}{\partial x^{i_{1}} \cdots \partial x^{i_{p}}} \Delta^{m-p} \left( (\delta^{p+k}g)^{i_{p+1} \cdots i_{m}} \right) \quad \text{in $U$.}
\end{aligned} \label{eq:bootstrap-regularity-g-1}
\end{equation}
\begin{subequations}
By choosing $k = \mathsf{k}$ in \eqref{eq:bootstrap-regularity-g-1}, one see that $\Delta^{m} \left( (\delta^{\mathsf{k}} g)^{i_{1} \cdots i_{m}} \right) = 0$ in $U$. Therefore, by local elliptic regularity, one know that 
\begin{equation}
\delta^{\mathsf{k}}g \in C^{\infty}(U;S^{m-\mathsf{k}}). \label{eq:bootstrap-regularity-g-2a}
\end{equation}
In addition, when $g \in \Ec'(\mathbb{R}^{n};S^{m})$ and $U = \mathbb{R}^{n}$, we know that 
\begin{equation}
\delta^{\mathsf{k}}g \equiv 0 \quad \text{in $\mathbb{R}^{n}$}. \label{eq:bootstrap-regularity-g-2b}
\end{equation}
\end{subequations}
\begin{subequations}
On the other hand, for each $0 \le k < \mathsf{k}$, we can write \eqref{eq:bootstrap-regularity-g-1} as 
\[
\begin{aligned}
& \Delta^{m} \left( (\delta^{k}g) \right)^{i_{1}\cdots i_{m}} = \sigma(i_{1} \cdots i_{m}) \Delta^{m} \left( (\delta^{k}g) \right)^{i_{1}\cdots i_{m}} \\
& \quad = - \sigma(i_{1} \cdots i_{m}) \sum_{p=1}^{\mathsf{k}-k} \binom{m}{p} (-1)^{p} \frac{\partial^{p}}{\partial x^{i_{1}} \cdots \partial x^{i_{p}}} \Delta^{m-p} \left( (\delta^{p+k}g)^{i_{p+1} \cdots i_{m}} \right) \\
& \quad = \sigma(i_{1} \cdots i_{m}) \sum_{p=0}^{\mathsf{k}-k-1} \binom{m}{p+1} (-1)^{p} \frac{\partial^{p+1}}{\partial x^{i_{1}} \cdots \partial x^{i_{p+1}}} \Delta^{m-p-1} \left( (\delta^{p+k+1}g)^{i_{p+2} \cdots i_{m}} \right) 
\end{aligned}
\]
in $U$. Therefore, for each $0 \le k < \mathsf{k}$ we have the implication 
\begin{equation}
\delta^{k+1}g \in C^{\infty}(U;S^{m-k-1}) \quad \text{implies} \quad \delta^{k}g \in C^{\infty}(U;S^{m-k}). \label{eq:bootstrap-regularity-g-3a}
\end{equation}
In addition, when $g \in \Ec'(\mathbb{R}^{n};S^{m})$ and $U = \mathbb{R}^{n}$, for each $0 \le k < \mathsf{k}$ we have the implication 
\begin{equation}
\delta^{k+1}g \equiv 0 \text{ in $\mathbb{R}^{n}$} \quad \text{implies} \quad \delta^{k}g \equiv 0 \text{ in $\mathbb{R}^{n}$.} \label{eq:bootstrap-regularity-g-3b}
\end{equation}
\end{subequations}
Combining \eqref{eq:bootstrap-regularity-g-2a} and \eqref{eq:bootstrap-regularity-g-3a} for the general case, and combining \eqref{eq:bootstrap-regularity-g-2b} and \eqref{eq:bootstrap-regularity-g-3b} for the case when $g \in \Ec'(\mathbb{R}^{n};S^{m})$ and $U = \mathbb{R}^{n}$, we conclude our result. 
\end{proof}

\section{A solenoidal decomposition theorem\label{sec:Decomposition-solenoidal-potential}}

 The main theme of this section is to prove a generalized solenoidal potential decomposition theorem, which is also new according to our best knowledge.  

\begin{proposition}\label{prop:generalized_helmholtz decomposition}
Let $\Omega$ be a bounded smooth domain in $\mathbb{R}^{n}$. For each $f \in H^{\alpha}(\Omega;S^{m})$ with $\alpha \in \mathbb{Z}_{\ge 0}$, there exists a unique decomposition 
\begin{equation}
f = \tilde{f} + \D^{k} v \quad \text{in $\Omega$} \label{eq:solenoidal-decomposition}
\end{equation}
with $\tilde{f} \in H^{\alpha}(\Omega;S^{m})$ \tteal{satisfying $\delta^{k} \tilde{f}=0$} and $v \in H^{\alpha+k}(\Omega;S^{m-k}) \cap H_{0}^{k}(\Omega;S^{m-k})$. 
\end{proposition}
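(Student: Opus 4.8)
The plan is to produce the potential part $\D^{k}v$ by solving a coercive variational problem on $H_{0}^{k}(\Omega;S^{m-k})$, and to pin down $\tilde f$ by the solenoidal condition $\delta^{k}\tilde f = 0$ in $\Omega$; this last condition is exactly what is needed for uniqueness, since otherwise any $\D^{k}\phi$ with $\phi\in H_{0}^{k}(\Omega;S^{m-k})$ could be moved between the two summands. Concretely, on $V:=H_{0}^{k}(\Omega;S^{m-k})$ I would introduce the bounded symmetric bilinear form $a(u,w):=(\D^{k}u,\D^{k}w)_{L^{2}(\Omega;S^{m})}$; integrating by parts $k$ times, which is legitimate by density of $C_{c}^{\infty}(\Omega;S^{m-k})$ in $V$ together with the duality $(\D\,\cdot\,,\,\cdot\,)=-(\,\cdot\,,\delta\,\cdot\,)$ from \cite[(2.1.8)]{Sharafutdinov_Book}, identifies $a$ as the Dirichlet form of $(-1)^{k}\delta^{k}\D^{k}$. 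Granting coercivity, Lax--Milgram produces a unique $v\in V$ with $a(v,\phi)=(f,\D^{k}\phi)_{L^{2}(\Omega)}$ for all $\phi\in V$; putting $\tilde f:=f-\D^{k}v$ and testing against $\phi\in C_{c}^{\infty}(\Omega;S^{m-k})$ gives $\delta^{k}\tilde f=0$ in $\mathscr{D}'(\Omega;S^{m-k})$, so that $\tilde f$ is solenoidal of order $k$.

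The coercivity estimate $\|\D^{k}v\|_{L^{2}(\Omega)}^{2}\ge c\,\|v\|_{H^{k}(\Omega)}^{2}$ for $v\in V$ is, I expect, the heart of the matter. By density it suffices to treat $v\in C_{c}^{\infty}(\Omega;S^{m-k})$, extended by zero to $\mathbb{R}^{n}$; since $(\D w)^{\wedge}(\xi)=\mathrm{i}\,\mathfrak{i}_{\xi}\widehat w(\xi)$, where $\mathfrak{i}_{\xi}:S^{j}\to S^{j+1}$ is symmetric multiplication by $\xi$, one gets
\[
\|\D^{k}v\|_{L^{2}(\mathbb{R}^{n})}^{2}=\int_{\mathbb{R}^{n}}\bigl|\mathfrak{i}_{\xi}^{k}\widehat v(\xi)\bigr|^{2}\,\D\xi .
\]
The algebraic input is the lower bound $|\mathfrak{i}_{\xi}w|\ge c|\xi|\,|w|$ on $S^{j}$ for every $j$ (equivalently, positive definiteness of $\mathfrak{j}_{\xi}\mathfrak{i}_{\xi}$ for $\xi\neq0$; see \cite{Sharafutdinov_Book}), which iterates to $|\mathfrak{i}_{\xi}^{k}w|\ge c'|\xi|^{k}|w|$ on $S^{m-k}$; hence $\|\D^{k}v\|_{L^{2}(\mathbb{R}^{n})}^{2}\gtrsim\bigl\||\xi|^{k}\widehat v\bigr\|_{L^{2}(\mathbb{R}^{n})}^{2}$, and the Poincaré inequality for compactly supported fields upgrades the homogeneous $\dot H^{k}$-seminorm on the right to the full $H^{k}$-norm. (An alternative is to iterate the generalized Korn inequality for the elliptic operator $\delta\D$.)

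For the regularity statement I would note that $v$ solves the elliptic Dirichlet problem $\delta^{k}\D^{k}v=\delta^{k}f$ in $\Omega$ with $v\in H_{0}^{k}(\Omega;S^{m-k})$: the operator $\delta^{k}\D^{k}$ is elliptic of order $2k$ because $\mathfrak{j}_{\xi}\mathfrak{i}_{\xi}$ is positive definite for $\xi\neq0$, the clamped conditions built into $H_{0}^{k}$ are the natural complementing conditions, and $\Omega$ is smooth; since $\delta^{k}f\in H^{\alpha-k}(\Omega;S^{m-k})$ with $\alpha\ge0$, elliptic regularity up to the boundary gives $v\in H^{\alpha+k}(\Omega;S^{m-k})$ and hence $\tilde f=f-\D^{k}v\in H^{\alpha}(\Omega;S^{m})$. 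For uniqueness, suppose $f=\tilde f_{1}+\D^{k}v_{1}=\tilde f_{2}+\D^{k}v_{2}$ with $v_{i}\in H_{0}^{k}(\Omega;S^{m-k})$ and $\delta^{k}\tilde f_{i}=0$ in $\Omega$; then $w:=v_{1}-v_{2}\in H_{0}^{k}(\Omega;S^{m-k})$ satisfies $\delta^{k}\D^{k}w=\delta^{k}(\tilde f_{2}-\tilde f_{1})=0$ in $\Omega$, so integrating by parts $k$ times yields $\|\D^{k}w\|_{L^{2}(\Omega)}^{2}=(-1)^{k}(w,\delta^{k}\D^{k}w)_{L^{2}(\Omega)}=0$; thus $\D^{k}w=0$, and extending $w$ by zero and using that $\mathfrak{i}_{\xi}^{k}$ is injective for $\xi\neq0$ (with $\widehat w$ continuous) forces $w\equiv0$, whence $v_{1}=v_{2}$ and $\tilde f_{1}=\tilde f_{2}$.

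In summary, the one genuinely non-routine ingredient is the tensor-algebraic positivity of $\mathfrak{j}_{\xi}\mathfrak{i}_{\xi}$ on symmetric tensors --- trivial only in the scalar case $S^{0}$, where $\delta\D=\Delta$ --- from which both the coercivity of $a$ and the ellipticity of $\delta^{k}\D^{k}$ used for the boundary regularity follow; once these are secured, the remaining steps are standard Lax--Milgram and elliptic boundary-regularity arguments.
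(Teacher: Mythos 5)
Your proof is correct, and it reduces the proposition to exactly the same boundary value problem as the paper --- $\delta^{k}\D^{k}v=\delta^{k}f$ with $v\in H_{0}^{k}(\Omega;S^{m-k})$, followed by setting $\tilde f:=f-\D^{k}v$ --- but you solve that problem by a different mechanism. The paper's Lemma~\ref{lem:elliptic-equation} obtains solvability by checking that the principal symbol $\mathfrak{j}_{\xi^{\otimes k}}\mathfrak{i}_{\xi^{\otimes k}}$ is positive definite (via Sharafutdinov's identity $\mathfrak{j}_{\xi}\mathfrak{i}_{\xi}=\tfrac{1}{m+1}|\xi|^{2}\mathbb{I}+\tfrac{m}{m+1}\mathfrak{i}_{\xi}\mathfrak{j}_{\xi}$), citing Taylor for the Fredholm index-zero property of the Dirichlet realization $(-1)^{k}\delta^{k}\D^{k}:H^{2k+\ell}\cap H_{0}^{k}\to H^{\ell}$, and then upgrading injectivity (proved by the same energy argument you use) to surjectivity; this packages existence and the $H^{\alpha+k}$ regularity into a single citation. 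You instead run Lax--Milgram, which requires the coercivity estimate $\|\D^{k}v\|_{L^{2}(\Omega)}^{2}\ge c\|v\|_{H^{k}(\Omega)}^{2}$ on $H_{0}^{k}$; your Fourier-side proof of this bound --- zero extension, the pointwise inequality $|\mathfrak{i}_{\xi}^{k}w|\ge c|\xi|^{k}|w|$ coming from the same symbol positivity, and Poincar\'e --- is correct and makes the existence step more self-contained, but you must then separately invoke boundary elliptic regularity (Agmon--Douglis--Nirenberg for the coercive Dirichlet problem) to reach $v\in H^{\alpha+k}$ when $\alpha\ge 1$, which is morally the same black box the paper takes from Taylor. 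Both arguments ultimately rest on the single nontrivial algebraic input you correctly isolate, the positivity of $\mathfrak{j}_{\xi}\mathfrak{i}_{\xi}$. One point you make explicit that the paper leaves implicit: as literally stated the decomposition cannot be unique, since any $\D^{k}\phi$ with $\phi\in H_{0}^{k}(\Omega;S^{m-k})$ can be shifted between the two summands; uniqueness must be read with the solenoidal constraint $\delta^{k}\tilde f=0$ built into the meaning of $\tilde f$, which is how the paper's construction and its appeal to the uniqueness in Lemma~\ref{lem:elliptic-equation} are to be understood.
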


We shall borrow some ideas from \cite[Lemma~7.1]{SS_linearize_polyharmonic} (see also \cite[Theorem~1.5]{Dairbekov_Sharafutdinov} and \cite[Theorem~3.3.2]{Sharafutdinov_Book}) to \tteal{prove} Proposition~{\rm \ref{prop:generalized_helmholtz decomposition}}. Formally acting $\delta^{k}$ on \eqref{eq:solenoidal-decomposition}, one reach the equation $\delta^{k}f = \delta^{k} \D^{k} v$. This suggests us to prove the following lemma. 

\begin{lemma}\label{lem:elliptic-equation}
Let $\Omega$ be a bounded smooth domain in $\mathbb{R}^{n}\, (n\ge 2)$, and given $k \in \mathbb{N}$ and $m \in \mathbb{N}$. Given any $h \in H^{\ell}(\Omega;S^{m})$ with integer $\ell \ge -k$, there exists a unique $w \in H^{2k+\ell}(\Omega;S^{m}) \cap H_{0}^{k}(\Omega;S^{m})$ such that 
\begin{equation}
(-1)^{k}\delta^{k} \D^{k} w = h \quad \text{in $\Omega$.} \label{eq:elliptic-equation}
\end{equation}
\end{lemma}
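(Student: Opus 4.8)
The plan is to realize $(-1)^{k}\delta^{k}\mathrm{d}^{k}$ as the operator associated to a coercive, bounded, symmetric bilinear form on the Hilbert space $H_{0}^{k}(\Omega;S^{m})$, and then invoke Lax--Milgram (or equivalently the Riesz representation theorem) to get existence and uniqueness of a weak solution, followed by elliptic regularity to upgrade the solution to $H^{2k+\ell}$. First I would introduce, for $u,w \in H_{0}^{k}(\Omega;S^{m})$, the bilinear form $B(u,w) := (\mathrm{d}^{k}u,\mathrm{d}^{k}w)_{L^{2}(\Omega;S^{m+k})}$, which is clearly bounded on $H_{0}^{k}$ and symmetric. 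For a datum $h \in H^{\ell}(\Omega;S^{m})$ with $\ell \ge -k$, the pairing $\langle h, w\rangle$ makes sense for $w \in H_{0}^{k}$ (since $H_{0}^{k} \hookrightarrow H^{-\ell}$ when $-\ell \le k$), and is a bounded linear functional on $H_{0}^{k}$. A weak solution $w \in H_{0}^{k}(\Omega;S^{m})$ of \eqref{eq:elliptic-equation} is then exactly a solution of $B(w,\varphi) = \langle h,\varphi\rangle$ for all $\varphi \in C_{c}^{\infty}(\Omega;S^{m})$, because of the formal duality $(\mathrm{d}u,v) = -(u,\delta v)$ recalled after \eqref{def_of_delta}, iterated $k$ times, which gives $(\mathrm{d}^{k}w,\mathrm{d}^{k}\varphi) = (-1)^{k}(\delta^{k}\mathrm{d}^{k}w,\varphi)$.

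The key analytic input, and the step I expect to be the main obstacle, is \emph{coercivity}: one needs a Poincaré-type inequality of the form $\|w\|_{H^{k}(\Omega)} \le C \|\mathrm{d}^{k}w\|_{L^{2}(\Omega)}$ for all $w \in H_{0}^{k}(\Omega;S^{m})$, i.e. that the symmetrized derivative $\mathrm{d}^{k}$ controls the full $H^{k}$ norm on functions vanishing to order $k$ on $\partial\Omega$. For $k=1$ and $m=0$ this is the classical Poincaré inequality; for tensors and higher $k$ it is a Korn-type estimate. I would obtain it by first establishing the estimate $\|\mathrm{d}^{k}w\|_{L^{2}}^{2} + \|w\|_{L^{2}}^{2} \gtrsim \|w\|_{H^{k}}^{2}$ on all of $H^{k}$ via a Fourier/Gårding argument (the principal symbol of $(-1)^{k}\delta^{k}\mathrm{d}^{k}$ is, up to a positive constant, $|\xi|^{2k}$ times a positive-definite endomorphism of $S^{m}$ after a symmetrization, so the operator is elliptic), and then removing the lower-order term $\|w\|_{L^{2}}^{2}$ by the standard compactness (Rellich) argument: if it failed there would be a sequence $w_{j}$ with $\|w_{j}\|_{H^{k}}=1$, $\mathrm{d}^{k}w_{j}\to 0$ in $L^{2}$, and a subsequence converging in $L^{2}$ to some $w$; the Gårding inequality forces $w_{j}$ to be Cauchy in $H^{k}$, so $w_{j}\to w$ in $H^{k}$ with $\mathrm{d}^{k}w=0$ and $w\in H_{0}^{k}$, and then one checks $\mathrm{d}^{k}w=0$ together with the vanishing boundary trace forces $w=0$ (e.g. $\mathrm{d}^{k}w=0$ means $w$ is a polynomial tensor field of a controlled type, which cannot vanish to order $k$ on $\partial\Omega$ unless it is zero), contradicting $\|w\|_{H^{k}}=1$.

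Granting coercivity, Lax--Milgram yields a unique $w \in H_{0}^{k}(\Omega;S^{m})$ with $B(w,\varphi)=\langle h,\varphi\rangle$ for all $\varphi\in H_{0}^{k}$, hence $(-1)^{k}\delta^{k}\mathrm{d}^{k}w = h$ in $\mathscr{D}'(\Omega;S^{m})$. It remains to prove the regularity statement $w \in H^{2k+\ell}(\Omega;S^{m})$. Since $(-1)^{k}\delta^{k}\mathrm{d}^{k}$ is a $2k$-th order linear elliptic operator with constant (smooth) coefficients, and $\Omega$ is a bounded smooth domain, this follows from standard interior-plus-boundary elliptic regularity for the Dirichlet problem: interior regularity gives $w\in H^{2k+\ell}_{\mathrm{loc}}(\Omega)$, and the boundary conditions $w\in H_{0}^{k}$ (i.e. $w,\partial_{\nu}w,\dots,\partial_{\nu}^{k-1}w$ vanish on $\partial\Omega$) form a normal system covering the operator (it is self-adjoint and positive, so the Lopatinski--Shapiro/complementing condition holds), whence $w\in H^{2k+\ell}(\Omega;S^{m})$ with the a priori bound $\|w\|_{H^{2k+\ell}}\le C(\|h\|_{H^{\ell}}+\|w\|_{L^{2}})\le C'\|h\|_{H^{\ell}}$. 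Uniqueness in the stated class is immediate from uniqueness in $H_{0}^{k}$ given by Lax--Milgram. One should also note the compatibility of conventions: if $\ell<0$ the statement $h\in H^{\ell}(\Omega;S^{m})$ is read as $h\in (H_{0}^{-\ell}(\Omega;S^{m}))'$, which is exactly what is needed for $\langle h,\cdot\rangle$ to define a bounded functional on $H_{0}^{k}\subseteq H_{0}^{-\ell}$ when $-\ell\le k$.
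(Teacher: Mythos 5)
Your proposal is correct in substance but follows a genuinely different route from the paper. You set up the variational formulation $B(u,w)=(\D^{k}u,\D^{k}w)_{L^{2}}$ on $H_{0}^{k}(\Omega;S^{m})$, prove coercivity, and apply Lax--Milgram plus boundary elliptic regularity; the paper instead computes the principal symbol $\mathfrak{j}_{\xi^{\otimes k}}\mathfrak{i}_{\xi^{\otimes k}}$, checks its positivity via the identity $\mathfrak{j}_{\xi}\mathfrak{i}_{\xi}=\frac{1}{m+1}|\xi|^{2}\mathbb{I}+\frac{m}{m+1}\mathfrak{i}_{\xi}\mathfrak{j}_{\xi}$, cites Taylor to get that $(-1)^{k}\delta^{k}\D^{k}:H^{2k+\ell}\cap H_{0}^{k}\to H^{\ell}$ is Fredholm of index zero, and then only needs injectivity, which it obtains from $\|\D^{k}w\|_{L^{2}}^{2}=0$ together with a citation to Dairbekov--Sharafutdinov. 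The Fredholm route is more economical because surjectivity comes for free from injectivity, whereas your route must establish the quantitative Poincar\'e--Korn bound $\|w\|_{H^{k}}\lesssim\|\D^{k}w\|_{L^{2}}$. That said, your coercivity step can be streamlined considerably: since $\D^{k}$ has constant coefficients, for $w\in C_{c}^{\infty}(\Omega;S^{m})$ Plancherel gives $\|\D^{k}w\|_{L^{2}}^{2}=\int\langle\mathfrak{j}_{\xi^{\otimes k}}\mathfrak{i}_{\xi^{\otimes k}}\hat{w}(\xi),\hat{w}(\xi)\rangle\,\D\xi\ge c\int|\xi|^{2k}|\hat{w}(\xi)|^{2}\,\D\xi$ (using $\mathfrak{i}_{\xi}\mathfrak{j}_{\xi}=\mathfrak{i}_{\xi}\mathfrak{i}_{\xi}^{*}\ge 0$ in the displayed identity, iterated), and the ordinary Poincar\'e inequality on the bounded domain then yields coercivity directly --- no G\r{a}rding-plus-Rellich contradiction argument is needed. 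The one point you leave essentially unproved is the kernel statement inside that contradiction argument, namely that $w\in H_{0}^{k}(\Omega;S^{m})$ with $\D^{k}w=0$ forces $w=0$; this is true (the zero extension of $w$ lies in $H^{k}(\mathbb{R}^{n})$ and satisfies $\D^{k}\tilde{w}=0$ globally, hence is a polynomial tensor field vanishing outside $\Omega$, hence zero), and it is exactly the point the paper outsources to \cite[Theorem~1.3]{Dairbekov_Sharafutdinov} --- so it needs either that short argument or a citation, but it is not a gap in the logic. Both proofs end by invoking boundary elliptic regularity for the Dirichlet problem to reach $H^{2k+\ell}$, so the approaches converge there.
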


\begin{proof}
Since the principal symbol of $\delta^{k}$ (resp. $\D^{k}$) is $\bfi^{k} \mathfrak{j}_{\xi^{\otimes k}}$ (resp. $\bfi^{k} \mathfrak{i}_{\xi^{\otimes k}}$), then the principal symbol of $(-1)^{k} \delta^{k} \D^{k}$ is $\mathfrak{j}_{\xi^{\otimes k}} \mathfrak{i}_{\xi^{\otimes k}}$. By using \cite[Lemma 3.3.3]{Sharafutdinov_Book}, one have 
\[
\mathfrak{j}_{\xi} \mathfrak{i}_{\xi} = \frac{1}{m+1}|\xi|^{2}\mathbb{I} + \frac{m}{m+1} \mathfrak{i}_{\xi} \mathfrak{j}_{\xi} > 0 \quad \text{in $S^{m}$},
\]
where $\mathbb{I}$ is the identity operator. Since $\mathfrak{j}_{\xi^{\otimes (\ell+1)}} \mathfrak{i}_{\xi^{\otimes (\ell+1)}} = \mathfrak{j}_{\xi} (\mathfrak{j}_{\xi^{\otimes \ell}} \mathfrak{i}_{\xi^{\otimes \ell}}) \mathfrak{i}_{\xi}$ for all $\ell \in \mathbb{N}$, then we know that $\mathfrak{j}_{\xi^{\otimes k}} \mathfrak{i}_{\xi^{\otimes k}} > 0$ in $S^{m}$. Therefore from \cite[Exercises~5.11.3 and 5.11.4]{Taylor_pde_1}, we know that the mapping 
\[
(-1)^{k} \delta^{k} \D^{k} : H^{2k+\ell}(\Omega) \cap H_{0}^{k}(\Omega) \rightarrow H^{\ell}(\Omega) \text{ is Fredholm of index zero.}
\]
In view of Fredholm theory, it remains to show the solution of \eqref{eq:elliptic-equation} is unique. In particular, if $w \in H_{0}^{k}(\Omega;S^{m})$ satisfies $\delta^{k} \D^{k} w=0$, integration by parts yields 
\[
\langle \delta^{k} \D^{k} w , w \rangle = \langle \D^{k} w, \D^{k} w \rangle = 0,
\]
which implies $\D^{k}w = 0$. \tteal{Finally, using \cite[Theorem~1.3]{Dairbekov_Sharafutdinov} as in the proof of \cite[Lemma~7.1]{SS_linearize_polyharmonic}, we conclude our lemma. }
\end{proof}

We are now ready to prove the main result of this section. 

\begin{proof}[Proof of Proposition~{\rm \ref{prop:generalized_helmholtz decomposition}}]
By choosing $h = (-1)^{k}\delta^{k}f \in H^{\alpha-k}(\Omega;S^{m-k})$ in Lemma~{\rm \ref{lem:elliptic-equation}}, there exists a unique $v \in H^{\alpha+k}(\Omega;S^{m-k}) \cap H_{0}^{k}(\Omega;S^{m-k})$ such that 
\[
\delta^{k} \D^{k} v = \delta^{k}f. 
\]
Clearly, if we define $\tilde{f} := f - \D^{k} v$, then we have $\delta^{k} \tilde{f}=0$. The uniqueness of the decomposition also easily follows from the uniqueness of solution in Lemma~{\rm \ref{lem:elliptic-equation}}. 
\end{proof}

\section{Main results}\label{sec:main results}

\subsection{Unique continuation property for momentum ray transform on tensors\label{subsec:UCP-ray-transform-tensors}}

We say that a tensor $g \in C^{\infty}(\Rb^n; S^m)$ \emph{vanishes at $x_{0}$ of infinite derivative order} if 
\[
\left. \frac{\partial^{\alpha}}{\partial x^{\alpha}}g_{i_{1}\cdots i_{m}} \right|_{x=x_{0}} = 0 \quad \text{for all multi-index $\alpha$.}
\]
In this section, we generalize \cite[Theorem~2.4]{DVS_UCP_2022} in the following theorem. 
\goldbach*

In particular the assumption \eqref{eq:vanishes-infinite-derivative-order} makes sense by the following lemma. 

\begin{lemma}
Let $m \in \mathbb{N}$ and an integer $0 \le k \le m$. Suppose that $f$ satisfies \eqref{eq:assumption-f-normal-well-defined}. 
If there exists an open set $U$ in $\mathbb{R}^{n}$ such that $R^{k}f=0$ in $U$, then $N^{p}f$ is smooth in $U$ for all $0 \le p \le k$. 
\end{lemma}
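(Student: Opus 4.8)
The idea is to combine two local-to-global mechanisms already at our disposal: the solenoidal decomposition of Proposition~\ref{prop:generalized_helmholtz decomposition}, which isolates a genuine potential term $\D^{k+1}v$ on a ball, and the pseudolocality of the normal operators $N^{p}$, which ensures that $N^{p}$ only ``sees'' the singular part of its argument. Fix $x_{0}\in U$ and a ball $B=B(x_{0},r)$ with $\overline B\subset U$; since smoothness is a local property it suffices to show $N^{p}f\in C^{\infty}(B;S^{m})$ for every $0\le p\le k$. When $k=m$ this is immediate: by \eqref{eq:reduction-GSV} one has $R^{m}f=f$, so $f\equiv 0$ on $B$, and pseudolocality of $N^{p}$ gives the claim. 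So from now on assume $k<m$.

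First I would decompose $f$ on $B$. Since $f|_{B}\in L^{2}(B;S^{m})=H^{0}(B;S^{m})$ by \eqref{eq:assumption-f-normal-well-defined}, Proposition~\ref{prop:generalized_helmholtz decomposition} applied with parameter $k+1$ yields $f=\tilde f+\D^{k+1}v$ in $B$, with $\tilde f\in L^{2}(B;S^{m})$, $v\in H^{k+1}(B;S^{m-k-1})\cap H_{0}^{k+1}(B;S^{m-k-1})$, and $\delta^{k+1}\tilde f=0$ in $B$ (the last fact coming from the construction in its proof). Since $R^{k}$ annihilates potential fields of order $k+1$ --- the implication (3)$\Rightarrow$(2), which, being an identity of differential operators, holds on all distributions --- we get $R^{k}\tilde f=R^{k}f=0$ in $B$. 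Applying Lemma~\ref{lem:generalized_saint_venant_local} with $\mathsf{k}=k$ to the zero-extension of $\tilde f$, we conclude $\tilde f\in C^{\infty}(B;S^{m})$. Now extend $v$ by zero so that $v\in\mathcal{E}'(\mathbb{R}^{n};S^{m-k-1})$, and set $g_{1}:=\D^{k+1}v$ and $g_{2}:=f-g_{1}$ on $\mathbb{R}^{n}$; then $g_{1}\in L^{2}(\mathbb{R}^{n};S^{m})$ is compactly supported, so $g_{2}$ still obeys \eqref{eq:assumption-f-normal-well-defined}, and $g_{2}=\tilde f$ on $B$, hence $g_{2}\in C^{\infty}(B;S^{m})$.

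The crucial point is that $N^{p}g_{1}=0$ for $0\le p\le k$. Iterating the scaling identity \eqref{eq:scaling-MRT} gives $I^{p}(\D^{k+1}v)=(-1)^{p}p!\,I^{0}(\D^{k+1-p}v)$, and since $k+1-p\ge 1$ the field $\D^{k+1-p}v$ is itself an inner derivative, whence $I^{0}(\D^{k+1-p}v)=0$ by \eqref{eq:scaling-MRT} with $k=0$; therefore $I^{p}g_{1}=0$ and so $N^{p}g_{1}=(I^{p})^{*}I^{p}g_{1}=0$. Consequently $N^{p}f=N^{p}g_{1}+N^{p}g_{2}=N^{p}g_{2}$. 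Finally, from \eqref{eq:normal_operator} the operator $N^{p}$ is, up to multiplication by polynomials in $x$, convolution with the kernels $\Xi$, which are smooth on $\mathbb{R}^{n}\setminus\{0\}$; hence $N^{p}$ is pseudolocal, and the singular support of $N^{p}g_{2}$ is contained in that of $g_{2}$, i.e.\ in $\mathbb{R}^{n}\setminus B$. Thus $N^{p}f\in C^{\infty}(B;S^{m})$, and letting $B$ range over all balls compactly contained in $U$ gives $N^{p}f\in C^{\infty}(U;S^{m})$ for all $0\le p\le k$.

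The step I expect to require the most care is the bookkeeping of the domains of the normal operators: one must verify that the $L^{1}$-extension \eqref{eq:extension-normal-L1} and the compactly-supported-distribution extension \eqref{eq:extension-normal-compactly-distribution} of $N^{p}$ are linear and coincide with $(I^{p})^{*}I^{p}$ on the common class in which $f$, $g_{1}$ and $g_{2}$ all live, so that both the splitting $N^{p}f=N^{p}g_{1}+N^{p}g_{2}$ and the vanishing $N^{p}g_{1}=0$ are legitimate there; the pseudolocality itself is routine once $\Xi$ is seen to be smooth off the origin. A minor point is that when $k=m$ the decomposition step is vacuous (handled separately above), and the indices of $\D^{j}$, $\delta^{j}$ and $R^{j}$ should be read throughout with the usual convention that tensor fields of negative order are identically zero.
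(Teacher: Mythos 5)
Your proof is correct and follows essentially the same route as the paper's: decompose $f$ on a ball via the solenoidal decomposition (Proposition~\ref{prop:generalized_helmholtz decomposition}), use the scaling identity \eqref{eq:scaling-MRT} to see that $N^{p}$ annihilates the potential part $\D^{k+1}v$ for $p\le k$, deduce smoothness of the solenoidal part from Lemma~\ref{lem:generalized_saint_venant_local}, and conclude by pseudolocality of the normal operator. The only differences are presentational: you make explicit the pseudolocality step and the vanishing $N^{p}g_{1}=0$, which the paper leaves implicit.
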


\begin{proof}
Fix $x_{0} \in U$, and let $B$ be a ball in $U$ containing $x_{0}$. Using Proposition~{\rm \ref{prop:generalized_helmholtz decomposition}} (with $\alpha=0$), one can decompose $f \in L^{2}(B)$ in $B$ as 
\[
f = \tilde{f} + \D^{k+1}v \quad \text{in $B$}
\]
where $\tilde{f} \in L^{2}(B;S^{m})$ satisfying $\delta^{k+1}\tilde{f}=0$ in $B$ and $v \in H_{0}^{k+1}(B;S^{m-k-1})$. Note that the zero extension $v\chi_{B}$ of $v$ is in $H^{k+1}(\mathbb{R}^{n};S^{m-k-1})$ and satisfies 
\begin{equation}
f = \tilde{f} + \D^{k+1}(v\chi_{B}) \quad \text{in $\mathbb{R}^{n}$}, \label{eq:decomposition-zero-extension}
\end{equation}
by extending $\tilde{f}=f$ outside $B$. 
By using \cite[Theorem~2.17.2]{Sharafutdinov_Book} (with $F = \D^{k+1}(v\chi_{B}) \in \Ec'(\mathbb{R}^{n};S^{m})$) and the equivalence in Remark~{\rm \ref{eq:equivalence-SV-operator}}, we have $R^{k}(\D^{k+1}(v\chi_{B})) = 0$. Therefore, \tteal{applying} $R^{k}$ on \eqref{eq:decomposition-zero-extension} \tteal{one sees that} 
\begin{equation}
R^{k}f = R^{k}\tilde{f} \quad \text{in $\mathbb{R}^{n}$,}
\end{equation}
therefore we conclude that $R^{k}\tilde{f}|_{B}=0$. Since $\delta^{k+1}\tilde{f}=0$ in $B$, Lemma~{\rm \ref{lem:generalized_saint_venant_local}} guarantees that $\tilde{f}$ is smooth in $B$.
Since \eqref{eq:assumption-f-normal-well-defined}, we know that $N^{p}f$ is \tteal{well-defined by \eqref{eq:extension-normal-L1} (when $0 \le k \le n-1$) and \eqref{eq:extension-normal-compactly-distribution} (when $n \le k \le m$)}. By using \eqref{eq:scaling-MRT}, for each $0 \le p\le k$, acting $N^{p}$ on \eqref{eq:decomposition-zero-extension} one see that 
\begin{equation}
N^{p}f = N^{p}\tilde{f} \quad \text{in $\mathbb{R}^{n}$,} \label{eq:Npf-Npftilde}
\end{equation}
which implies that $N^{p}f$ is smooth in $B$, and hence smooth near $x_{0}$. By arbitrariness of $x_{0} \in U$, we conclude our lemma. 
\end{proof}

With Proposition~{\rm \ref{MRT:Prop}} and a unique continuation \tteal{principle} of ray transform of scalar functions in \cite[Theorem~1.1]{Keijo_partial_function} at hand, we now able to prove Theorem~{\rm\ref{mrt_ucp}}.

\begin{proof}[Proof of Theorem~{\rm\ref{mrt_ucp}}]
Since $N^{p}f$ vanishes at $x_{0}$ of infinite derivative order, by using Proposition~\ref{MRT:Prop} we know that $N^{0}|_{S^{0}}\left((R^{0}|_{S^{m-k}}f^{i_{1} \dots i_{k}})_{p_{1} q_{1} \dots p_{m-k} q_{m-k}} \right)$ vanishes at $x_{0}$ of infinite derivative order as well. Since $R^{0}|_{S^{m-k}}(f^{i_1\cdots i_k})=R^{k}|_{S^{m}}f=0$ in $U$, using \cite[Theorem~1.1]{Keijo_partial_function} and the equivalence \eqref{eq:GSV-equivalence-UCP} we reach $R^{k}f=W^{k}f=0$ in $\mathbb{R}^{n}$. Then our result follows from \cite[Theorem~2.17.2]{Sharafutdinov_Book}. 
\end{proof}
\begin{remark}
  Following the ideas of \cite{partial_data_JK_JFAA}, it is possible to improve the Theorem \ref{mrt_ucp}  by replacing the assumption $R^kf=0$ in $U$ with $ P(D) R^kf=0$ in $U$, where $P(D)$ is  a constant coefficient differential operator.
\end{remark}

\subsection{Measurable unique continuation property of momentum ray transform\label{subsec:MUCP-MRT}}

We first recall the following unique continuation property from positive measure set (MUCP) for fractional Laplace operator from \cite{Ghosh_Salo_single_measurement} (or \cite{GR19unique}).

\begin{lemma}[{\cite[Proposition~5.1]{Ghosh_Salo_single_measurement}\label{lem:MUCP}}] 
Let $ n\ge 1$ and $ \Omega$ be an open set in $\mathbb{R}^{n}$. Let $q \in L^{\infty}(\Omega)$ and assume that $u \in H^{s}(\mathbb{R}^{n}))$ with $s \in [\frac{1}{4},1)$ satisfies 
\begin{align*}
((-\Delta)^s +q)u=0 \quad \mbox{in} \quad \Omega.
\end{align*}
If there exists a measurable set $ E\subset \Omega$ with positive measure such that $u=0$ in $E$, then $u \equiv 0$ in $\mathbb{R}^{n}$.
\end{lemma}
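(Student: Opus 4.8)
The plan is to reprove this statement following the strategy of R\"uland and Ghosh--R\"uland--Salo--Uhlmann (see \cite{GR19unique,Ghosh_Salo_single_measurement}), which converts the nonlocal equation into a boundary unique continuation problem for a degenerate elliptic equation by means of the Caffarelli--Silvestre extension. First I would realize the fractional Laplacian as a weighted Dirichlet-to-Neumann map. Given $u \in H^{s}(\mathbb{R}^{n})$, let $w = w(x,y)$ on the half-space $\mathbb{R}^{n+1}_{+} = \mathbb{R}^{n} \times (0,\infty)$ be the unique weak solution, in the weighted Sobolev space $H^{1}(\mathbb{R}^{n+1}_{+}, y^{1-2s})$, of
\begin{equation*}
\nabla \cdot \left( y^{1-2s} \nabla w \right) = 0 \quad \text{in } \mathbb{R}^{n+1}_{+}, \qquad w(\cdot,0) = u \quad \text{on } \mathbb{R}^{n},
\end{equation*}
so that, for an explicit positive constant $c_{s}$, the weighted co-normal trace recovers the operator,
\begin{equation*}
-c_{s} \lim_{y \to 0^{+}} y^{1-2s} \partial_{y} w = (-\Delta)^{s} u \quad \text{on } \mathbb{R}^{n}.
\end{equation*}
The equation $((-\Delta)^{s}+q)u = 0$ in $\Omega$ then turns into a Robin-type boundary condition on $\Omega \times \{0\}$: the weighted Neumann trace of $w$ equals a bounded multiple of $-qu$.

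Second, I would exploit the hypothesis $u = 0$ on $E$ to produce vanishing Cauchy data on a boundary set of positive measure. On $E \subset \Omega$ the Dirichlet trace $w(\cdot,0) = u$ vanishes by assumption; and since the weighted Neumann trace is, up to the constant $c_{s}$, equal to $-qu$, it too vanishes on $E$, using here that $q \in L^{\infty}(\Omega)$. Thus both Cauchy data of $w$ vanish on a set of positive $n$-dimensional Lebesgue measure contained in the hyperplane $\{y = 0\}$.

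Third --- and this is the crux --- I would upgrade this measure-theoretic vanishing to infinite-order vanishing at a single boundary point, and then invoke the strong unique continuation property for the degenerate operator $\nabla \cdot (y^{1-2s}\nabla \, \cdot\,)$. Let $x_{0} \in E$ be a point of Lebesgue density one. Combining the vanishing of both Cauchy data with an Almgren-type frequency (monotonicity) function adapted to the weight $y^{1-2s}$ --- equivalently, with an appropriate Carleman estimate at the boundary --- one shows that $w$ vanishes to infinite order at $(x_{0},0)$. The boundary strong unique continuation property for the extension then forces $w \equiv 0$ in $\mathbb{R}^{n+1}_{+}$, and restricting to $y=0$ yields $u \equiv 0$ in $\mathbb{R}^{n}$.

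The main obstacle is precisely this third step: proving the boundary strong unique continuation for the weighted equation and, within it, converting positive-measure vanishing into infinite-order vanishing at a density point via a blow-up/doubling analysis. The restriction $s \in [\tfrac14,1)$ enters exactly here, ensuring the trace regularity ($u \in H^{s}$ with $2s \ge \tfrac12$) needed for the Cauchy data and the monotonicity formula to be well defined. Since these doubling inequalities and the frequency-function machinery are technically heavy, I would import them from the cited works \cite{GR19unique,Ghosh_Salo_single_measurement} rather than reconstruct them here.
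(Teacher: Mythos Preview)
The paper does not prove this lemma at all: it is quoted verbatim from \cite[Proposition~5.1]{Ghosh_Salo_single_measurement} and used as a black box in the proofs of Theorems~\ref{thm:MUCP-ray-transform-function} and~\ref{thm:MUCP-ray-transform-vector-field}. Your sketch correctly reproduces the strategy of the original source (Caffarelli--Silvestre extension, vanishing Cauchy data on a set of positive measure, then boundary strong unique continuation via a frequency/doubling argument at a Lebesgue density point), so there is nothing to compare against in the present paper.
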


\begin{remark}
See also \cite[Theorem~4]{GR19unique} for more general results. We also refer to Lemma~{\rm \ref{lem:UCP-open}} in the next section for antilocality property for fractional elliptic operators, which implies the unique continuation property from nonempty open sets. 
\end{remark}

By utilizing the above lemma, we can prove some MUCP results for ray transforms. 
\measurable*


\begin{proof}[Proof of Theorem~{\rm \ref{thm:MUCP-ray-transform-function}}]
It is well-known that (see e.g.\ \cite[Exercise~12.10]{Ilmavirta21XrayNote}) there exists a constant $c_{n} \neq 0$ such that 
\begin{equation}
(-\Delta)^{\frac{1}{2}} N^{0}f = c_{n} f \quad\text{in $\mathbb{R}^{n}$.} \label{eq:well-known-normal-operator}
\end{equation}
Now $f|_{U}=0$ entails that $\left( (-\Delta)^{1/2} \left( N^{0}f \right) \right)|_{U}=0$. Since $\left. \left( N^{0}f \right) \right|_{E}=0$, by applying Lemma~{\rm \ref{lem:MUCP}} (with $s=\frac{1}{2}$, $u = N^{0}f$ and $q = 0$), this implies that $N^{0}f=0$ in $\mathbb{R}^{n}$, and our result follows from \eqref{eq:well-known-normal-operator}. 
\end{proof}


By using \eqref{eq:key-equation_mrt-special2}, we also can obtain an analogue for vector fields. 

\begin{theorem}\label{thm:MUCP-ray-transform-vector-field}
Suppose $U \subseteq \Rb^n$ be any non empty open set and $ n\ge 2$. Suppose that ${\rm curl}\,(N^{0}|_{S^{1}}f) \in H^{\frac{1}{2}}(\mathbb{R}^{n})$ {\rm (}see Section~{\rm \ref{subsec:assumption-regularity-MUCP}}{\rm )}. If $\left( {\rm curl} \, (f) \right)|_{U}=0$ and there exists a positive measure set $E$ in $U$ such that $\left. {\rm curl}\,(N^{0}|_{S^{1}}f) \right|_{E}=0$, then ${\rm curl}\,(f) \equiv 0$ in $\mathbb{R}^{n}$. 
\end{theorem}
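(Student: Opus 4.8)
\textbf{Proof proposal for Theorem~\ref{thm:MUCP-ray-transform-vector-field}.}

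The plan is to mirror the argument used to establish Theorem~\ref{thm:MUCP-ray-transform-function}, but working with $u := {\rm curl}\,(N^{0}|_{S^{1}}f)$ in place of $N^{0}f$ and invoking the key identity \eqref{eq:key-equation_mrt-special2} to relate this to $\operatorname{curl}(f)$. First I would recall \eqref{eq:key-equation_mrt-special2}, which states that $N^{0}|_{S^{0}}\bigl(({\rm curl}\,(f))_{ij}\bigr) = (n-1)\,({\rm curl}\,(N^{0}|_{S^{1}}f))_{ij}$, componentwise, for $f \in (\Ec'(\mathbb{R}^{n}))^{n} \cup (\mathcal{S}(\mathbb{R}^{n}))^{n}$ (and, by the density/regularity considerations of Section~\ref{subsec:assumption-regularity-MUCP}, for $f$ in the class under consideration here). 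Combining this with the scalar normal-operator identity \eqref{eq:well-known-normal-operator}, namely $(-\Delta)^{1/2} N^{0}|_{S^{0}} g = c_{n} g$, applied componentwise to $g = ({\rm curl}\,(f))_{ij}$, yields
\begin{equation*}
(-\Delta)^{1/2}\bigl( ({\rm curl}\,(N^{0}|_{S^{1}}f))_{ij} \bigr) = \frac{c_{n}}{n-1}\, ({\rm curl}\,(f))_{ij} \quad \text{in $\mathbb{R}^{n}$, for each pair $i,j$.}
\end{equation*}

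Next I would feed this into Lemma~\ref{lem:MUCP}. Fix a pair of indices $i,j$ and set $u_{ij} := ({\rm curl}\,(N^{0}|_{S^{1}}f))_{ij}$, which lies in $H^{1/2}(\mathbb{R}^{n})$ by hypothesis. The assumption $({\rm curl}\,(f))|_{U} = 0$ together with the displayed identity gives $\bigl( (-\Delta)^{1/2} u_{ij} \bigr)|_{U} = 0$, i.e.\ $u_{ij}$ solves $((-\Delta)^{1/2} + q)u_{ij} = 0$ in $U$ with $q = 0$. The second hypothesis provides a positive-measure set $E \subset U$ on which $u_{ij}$ vanishes. Applying Lemma~\ref{lem:MUCP} with $s = \tfrac12$, $\Omega = U$, $q = 0$, we conclude $u_{ij} \equiv 0$ in $\mathbb{R}^{n}$. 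Since the pair $i,j$ was arbitrary, ${\rm curl}\,(N^{0}|_{S^{1}}f) \equiv 0$ in $\mathbb{R}^{n}$, and then the displayed identity (or \eqref{eq:key-equation_mrt-special2} directly) forces $({\rm curl}\,(f))_{ij} = \tfrac{n-1}{c_{n}} (-\Delta)^{1/2}(0) = 0$ for every $i,j$, i.e.\ ${\rm curl}\,(f) \equiv 0$ in $\mathbb{R}^{n}$, as desired.

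The only genuine subtlety is justifying that \eqref{eq:key-equation_mrt-special2} and \eqref{eq:well-known-normal-operator} may be applied in the regularity class implicit in the statement (where $f$ need not be Schwartz and we only assume ${\rm curl}\,(N^{0}|_{S^{1}}f) \in H^{1/2}$): one should check that $N^{0}|_{S^{1}}f$ and the relevant convolutions make sense as tempered distributions and that the intertwining identities pass to this setting by approximation or by the mapping properties recorded around \eqref{eq:extension-normal-L1}--\eqref{eq:Riesz-domain-range2}. I expect this bookkeeping to be the main (though routine) obstacle; the unique-continuation heart of the argument is supplied wholesale by Lemma~\ref{lem:MUCP} exactly as in the scalar case.
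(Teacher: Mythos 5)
Your argument is correct and follows essentially the same route as the paper: you derive the identity $(-\Delta)^{1/2}\bigl({\rm curl}\,(N^{0}|_{S^{1}}f)\bigr) = c_{n}'\,{\rm curl}\,(f)$ by combining \eqref{eq:key-equation_mrt-special2} with \eqref{eq:well-known-normal-operator}, and then apply Lemma~\ref{lem:MUCP} with $s=\tfrac{1}{2}$, $q=0$ and $u={\rm curl}\,(N^{0}|_{S^{1}}f)$ exactly as the paper does. The componentwise bookkeeping and the closing remark on regularity are fine but do not change the substance.
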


\begin{remark}
We now consider the case when $n=2,3$. Let $U$ be an open set in $\mathbb{R}^{n}$ which is star-shaped with respect to some $x_{0} \in U$. If $f \in ( C^{1}(U) )^{n}$ and ${\rm curl}\,(f) = 0$ in $U$, then it is well-known that there exists a potential $p \in C^{2}(U)$ such that $f = \nabla p$ in $U$, which is an consequence of Poincar\'{e} lemma for de Rham cohomology groups. For any dimension $n\ge 2$, one can use  \cite[Theorem~2.17.2]{Sharafutdinov_Book} to conclude the same result.
\end{remark}


\begin{proof}[Proof of Theorem~{\rm \ref{thm:MUCP-ray-transform-vector-field}}]
Combining \eqref{eq:key-equation_mrt-special2} and \eqref{eq:well-known-normal-operator}, one see that there exists a constant $c_{n}' \neq 0$ such that 
\begin{equation}
(-\Delta)^{\frac{1}{2}} \left( {\rm curl}\, (N^{0}|_{S^{1}}f) \right) = c_{n}' {\rm curl}\,(f) \quad \text{in $\mathbb{R}^{n}$.} \label{eq:well-known-operator-curl}
\end{equation}
Now $\left. {\rm curl} \, (f) \right|_{U}=0$ entails that $\left. (-\Delta)^{\frac{1}{2}} \left( {\rm curl}\, (N^{0}|_{S^{1}}f) \right) \right|_{U}=0$. Since $\left. {\rm curl}\,(N^{0}|_{S^{1}}f) \right|_{E}=0$, by applying Lemma~{\rm \ref{lem:MUCP}} (with $s=\frac{1}{2}$, $u = {\rm curl}\,(N^{0}|_{S^{1}}f)$ and $q=0$), this implies that ${\rm curl}\,(N^{0}|_{S^{1}}f) = 0$ in $\mathbb{R}^{n}$. Combining this with \eqref{eq:well-known-operator-curl}, we conclude our theorem.  
\end{proof}

\subsection{Generalization of momentum ray transform: Fractional momentum ray transform}\label{subsec:ray-transform-elliptic-operator}

The main theme of this section is to generalize the momentum ray transform to a more general weighted ray transform, and explain the connection with fractional elliptic operators. 

Let $n \ge 2$ be an integer. For each real number $0 < s < \frac{n}{2}$, we consider the \tteal{fractional momentum} ray transform $\mathcal{X}_{s} : \mathcal{S}(\mathbb{R}^{n}) \rightarrow C^{\infty}(\mathbb{R}^{n} \times \mathbb{S}^{n-1})$ defined by 
\begin{equation}
\big( \mathcal{X}_{s}f \big) (x,\xi) := \big( I^{2s-1} f \big) (x,\xi) \equiv \int\limits_{0}^{\infty} \tau^{2s-1} f(x + \tau \xi)\, \D \tau \quad \text{for all } f \in \Sc(\mathbb{R}^{n}) \label{eq:X-ray}
\end{equation}
and for all $(x,\xi) \in \mathbb{R}^{n} \times \mathbb{S}^{n-1}$. Rather than the normal operator $N^{2s-1}$, here we alternative consider the average $\mathcal{A}_{s} : \mathcal{S}(\mathbb{R}^{n}) \rightarrow C^{\infty}(\mathbb{R}^{n})$ over the sphere $\mathbb{S}^{n-1}$ \tteal{is defined by}  
\begin{equation}\label{def_A_s}
    \big( \mathcal{A}_{s} f \big) (x) := c(n,-s) \int\limits_{\mathbb{S}^{n-1}} \big( \mathcal{X}_{s}f \big) (x,\xi) \, \D S_{\xi},
\end{equation}
where $c(n,s) := \frac{2^{2s} \Gamma(\frac{n+2s}{2})}{\pi^{n/2}|\Gamma(-s)|}$. We compute that 
\begin{equation*}
\begin{aligned}
(\mathcal{A}_{s}f)(x) & = c(n,-s) \int\limits_{\mathbb{S}^{n-1}} \int\limits_{0}^{\infty} \tau^{2s-1} f(x + \tau \xi)\, \D \tau \, \D S_{\xi} \\ 
& = c(n,-s) \int\limits_{\mathbb{S}^{n-1}} \int\limits_{0}^{\infty} \tau^{2s-n} f(x + \tau \xi) \, \big( \tau^{n-1} \D \tau \, \D S_{\xi} \big) \\
& = c(n,-s) \int\limits_{\mathbb{R}^{n}} |y|^{2s-n}f(x+y)\,\D y,
\end{aligned} 
\end{equation*}
In other words, we observe that $\mathcal{A}_{s} : \mathcal{S}(\mathbb{R}^{n}) \rightarrow C^{\infty}(\mathbb{R}^{n})$ is simply the Riesz potential, and in particular from \cite[Theorem~5]{Sti19FractionalLaplacian} for each $f \in \mathcal{S}(\mathbb{R}^{n})$ we have  
\begin{equation}
(-\Delta)^{-s}f(x) = \frac{1}{\Gamma(s)} \int\limits_{0}^{\infty} e^{t\Delta} f(x) \, \frac{\D t}{t^{1-s}} = (\mathcal{A}_{s}f)(x) \quad \text{for all $x \in \mathbb{R}^{n}$,} \label{eq:semigroup-definition-negative-Laplacian}
\end{equation}
where $(-\Delta)^{-s}$ is the negative power of the Laplacian and $\{e^{t\Delta}\}_{t \ge 0}$ is the classical heat diffusion semigroup, see also \cite[Definition~2.3]{Kwa17FractionalEquivalent}. This corresponds to the numerical identity 
\begin{equation}
\lambda^{-s} = \frac{1}{\Gamma(s)} \int\limits_{0}^{\infty} e^{-t\lambda} \, \frac{\D t}{t^{1-s}} \quad \text{for all $\lambda>0$,} \label{eq:numerical-identity}
\end{equation}
see e.g. \cite[Section~2.7]{Kwa17FractionalEquivalent}. For each $p \in [1,\frac{n}{2s})$, by utilizing weak Young's inequality, one can show that 
\begin{equation}
\mathcal{A}_{s}:L^{p}(\mathbb{R}^{n}) \rightarrow L^{q}(\mathbb{R}^{n}) \text{ is bounded provided } \frac{1}{q} = \frac{1}{p} - \frac{2s}{n}, \label{eq:Riesz-domain-range}
\end{equation}
see also e.g. \cite[Section~2.7]{Kwa17FractionalEquivalent}. Using convolution theorem we observe that $\partial^{\alpha}(\mathcal{A}_{s}f)=\mathcal{A}_{s}(\partial^{\alpha}f)$, then in particular 
\begin{equation}
\mathcal{A}_{s}:W^{m,p}(\mathbb{R}^{n}) \rightarrow W^{m,q}(\mathbb{R}^{n}) \text{ is bounded provided } \frac{1}{q} = \frac{1}{p} - \frac{2s}{n}, \label{eq:Riesz-domain-range1}
\end{equation}
for all non-negative integer $m$. We are interested in the following particular case: 
\begin{lemma} \label{lem:averaging-operator-fractional-Laplacian}
Let $n \ge 2$ be an integer and let $0 < s \le \frac{n}{4}$, then for each non-negative integer $m$ we know that 
\[
\mathcal{A}_{s} : W^{m,\frac{2n}{n+4s}}(\mathbb{R}^{n}) \rightarrow W^{m,2}(\mathbb{R}^{n}) \equiv H^{m}(\mathbb{R}^{n}) \text{ is bounded,}
\]
and $(-\Delta)^{s}\mathcal{A}_{s}f=f$ for all $f \in W^{m,\frac{2n}{n+4s}}(\mathbb{R}^{n})$. Here, $(-\Delta)^{s}$ is the usual Fourier fractional Laplacian\footnote{See also e.g. \cite[Definition~1]{Sti19FractionalLaplacian} for \tteal{the} definition of \tteal{the} fractional Laplacian in some suitable distribution sense}. 
\end{lemma}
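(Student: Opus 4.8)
The plan is to prove Lemma~\ref{lem:averaging-operator-fractional-Laplacian} in two parts, treating the mapping property and the inversion identity separately, both reducing to facts already established for the Riesz potential $\mathcal{A}_s$ in \eqref{eq:Riesz-domain-range1} and \eqref{eq:semigroup-definition-negative-Laplacian}.

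For the boundedness statement, I would simply specialize \eqref{eq:Riesz-domain-range1} with $p = \frac{2n}{n+4s}$. First I check that $p \in [1, \frac{n}{2s})$ so that the hypothesis of \eqref{eq:Riesz-domain-range1} (equivalently \eqref{eq:Riesz-domain-range}) applies: the condition $p \ge 1$ amounts to $\frac{2n}{n+4s} \ge 1$, i.e.\ $n \ge 4s$, which is exactly the hypothesis $s \le \frac{n}{4}$; and $p < \frac{n}{2s}$ holds since $\frac{2n}{n+4s} < \frac{n}{2s} \iff 4s < n + 4s$, which is automatic for $s>0$. Then with this $p$, the conjugate exponent is $\frac{1}{q} = \frac{1}{p} - \frac{2s}{n} = \frac{n+4s}{2n} - \frac{2s}{n} = \frac{n+4s-4s}{2n} = \frac12$, so $q = 2$. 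Hence \eqref{eq:Riesz-domain-range1} gives $\mathcal{A}_s : W^{m,\frac{2n}{n+4s}}(\mathbb{R}^n) \to W^{m,2}(\mathbb{R}^n) = H^m(\mathbb{R}^n)$ bounded for every non-negative integer $m$, as claimed.

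For the inversion identity $(-\Delta)^s \mathcal{A}_s f = f$, I would argue first for $f \in \mathcal{S}(\mathbb{R}^n)$ using the Fourier multiplier picture: by \eqref{eq:semigroup-definition-negative-Laplacian}, $\mathcal{A}_s f = (-\Delta)^{-s} f$ has Fourier transform $|\xi|^{-2s}\widehat{f}(\xi)$, and the Fourier fractional Laplacian $(-\Delta)^s$ acts as multiplication by $|\xi|^{2s}$, so $(-\Delta)^s \mathcal{A}_s f$ has Fourier transform $|\xi|^{2s}|\xi|^{-2s}\widehat f(\xi) = \widehat f(\xi)$, hence equals $f$; the composition is well-defined on $\mathcal{S}(\mathbb{R}^n)$ because $\mathcal{A}_s f$ lies in an appropriate $L^q$ (and in fact decays), so that $(-\Delta)^s$ applied in the distributional sense makes sense. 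To pass to general $f \in W^{m,\frac{2n}{n+4s}}(\mathbb{R}^n)$, I would use a density argument: take $f_j \in \mathcal{S}(\mathbb{R}^n)$ with $f_j \to f$ in $W^{m,\frac{2n}{n+4s}}(\mathbb{R}^n)$; the boundedness just proved gives $\mathcal{A}_s f_j \to \mathcal{A}_s f$ in $H^m(\mathbb{R}^n)$, hence in $\mathcal{S}'(\mathbb{R}^n)$, and since $(-\Delta)^s : H^m \to H^{m-2s}$ (or simply $(-\Delta)^s : \mathcal{S}' \to \mathcal{S}'$ on the relevant subspace) is continuous, $(-\Delta)^s \mathcal{A}_s f_j \to (-\Delta)^s \mathcal{A}_s f$ in $\mathcal{S}'(\mathbb{R}^n)$; but the left side equals $f_j \to f$, so $(-\Delta)^s \mathcal{A}_s f = f$.

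The main obstacle, such as it is, is purely a matter of bookkeeping about in which distribution space $(-\Delta)^s$ is being applied and why the composition $(-\Delta)^s \circ \mathcal{A}_s$ is well-defined — one must make sure $\mathcal{A}_s f$ is a tempered distribution whose Fourier transform is a locally integrable function near the origin so that multiplication by $|\xi|^{2s}$ is unambiguous; this is handled by the $L^q$ membership from \eqref{eq:Riesz-domain-range} together with the observation that $\widehat{\mathcal{A}_s f}(\xi) = c|\xi|^{-2s}\widehat f(\xi)$ with $\widehat f$ bounded (as $f \in L^1 + L^2$ type spaces), so the singularity at $\xi = 0$ is integrable precisely because $2s < n$. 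Everything else is a direct citation of \eqref{eq:Riesz-domain-range1} and \eqref{eq:semigroup-definition-negative-Laplacian} combined with elementary exponent arithmetic.
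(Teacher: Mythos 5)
Your proposal is correct and is essentially the argument the paper intends: the lemma is stated there without a separate proof, as a direct specialization of \eqref{eq:Riesz-domain-range1} with $p=\frac{2n}{n+4s}$ (so $q=2$) combined with the identification \eqref{eq:semigroup-definition-negative-Laplacian} of $\mathcal{A}_{s}$ with the Riesz potential $(-\Delta)^{-s}$, and your exponent arithmetic and density argument fill in exactly those steps. The only caveat --- inherited from the paper's own claim \eqref{eq:Riesz-domain-range} rather than introduced by you --- is that at the endpoint $s=\frac{n}{4}$ one has $p=1$, where the Riesz potential is only of weak type $(1,q)$, so the strong boundedness argument really covers $0<s<\frac{n}{4}$.
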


\begin{remark}[Critical case]
When $s = \frac{n}{2}$, we define the weighted ray transform $\mathcal{X}_{\frac{n}{2}} : \mathcal{S}(\mathbb{R}^{n}) \rightarrow C^{\infty}(\mathbb{R}^{n} \times \mathbb{S}^{n-1})$ by 
\begin{equation}
\big( \mathcal{X}_{\frac{n}{2}} f \big)(x,\xi) := \int_{0}^{\infty} (-2 \log \tau - \gamma_{\rm EM}) f(x + \tau \xi) \, \D \tau \quad \text{for all $(x,\xi) \in \mathbb{R}^{n} \times \mathbb{S}^{n-1}$,}
\end{equation}
where the Euler-Mascheroni constant $\gamma_{\rm EM}$ is given by 
$\gamma_{\rm EM} := - \int_{0}^{\infty} e^{-t} \log t \, \D t \approx 0.577215$. \tteal{The average operator} $\mathcal{A}_{\frac{n}{2}} : \mathcal{S}(\mathbb{R}^{n}) \rightarrow C^{\infty}(\mathbb{R}^{n})$ is then given by 
\[
\big( \mathcal{A}_{\frac{n}{2}} f \big) (x) := \frac{1}{\Gamma(\frac{n}{2})(4\pi)^{\frac{n}{2}}} \int\limits_{\mathbb{S}^{n-1}} \big( \mathcal{X}_{\frac{n}{2}} f \big) (x,\xi) \, \D S_{\xi}.
\]
We compute that
\begin{align*}
(\mathcal{A}_{\frac{n}{2}}f)(x) & = \frac{1}{\Gamma(\frac{n}{2})(4\pi)^{\frac{n}{2}}} \int\limits_{\mathbb{S}^{n-1}} \int\limits_{0}^{\infty} (-2 \log \tau - \gamma_{\rm EM}) f(x + \tau \xi)\, \D \tau \, \D S_{\xi} \\ 
& = \frac{1}{\Gamma(\frac{n}{2})(4\pi)^{\frac{n}{2}}} \int\limits_{\mathbb{S}^{n-1}} \int\limits_{0}^{\infty} (-2 \log \tau - \gamma_{\rm EM}) f(x + \tau \xi) \, \big( \tau^{n-1} \D \tau \, \D S_{\xi} \big) \\
& = \frac{1}{\Gamma(\frac{n}{2})(4\pi)^{\frac{n}{2}}} \int\limits_{\mathbb{R}^{n}} (-2 \log |y| - \gamma_{\rm EM})f(x+y)\,\D y.
\end{align*}
Since $|\xi|^{-n}$ is not a tempered distribution, then it is interesting to mention that \cite[Theorem~5]{Sti19FractionalLaplacian} showed that 
\[
((-\Delta)^{-\frac{n}{2}}f)(x) = (\mathcal{A}_{\frac{n}{2}}f)(x) \quad \text{for all $x \in \mathbb{R}^{n}$}
\]
for all $f \in \mathcal{S}(\mathbb{R}^{n})$ with $\int\limits_{\mathbb{R}^{n}}f(x)\, \D x = 0$. 
\end{remark}

For $0 < s < \frac{n}{2}$, we now express the weighted ray transform \eqref{eq:X-ray} in terms of (Gauss-Weierstrass) heat kernel 
\[
k_{t}(x,y) := \frac{1}{(4\pi t)^{\frac{n}{2}}} e^{-\frac{|x-y|^{2}}{4t}}.
\]
\tteal{It is known that}  
\[
(e^{t\Delta}f)(x) = \int\limits_{\mathbb{R}^{n}} k_{t}(x,y)f(y) \, \D y \quad \text{for all $x \in \mathbb{R}^{n}$} 
\]
and $(e^{t\Delta}f)^{\wedge}(x) = e^{-t|\xi|^{2}} \hat{f}(\xi)$ for all $f \in \mathcal{S}(\mathbb{R}^{n})$. In addition, from \cite[Section~2.7]{Kwa17FractionalEquivalent} we have 
\[
\frac{1}{\Gamma(s)} \int\limits_{0}^{\infty} k_{t}(x,y) \, \frac{\D t}{t^{1-s}} = c(n,-s) |x-y|^{2s-n}.
\]
Accordingly, we observe that 
\begin{equation*}
|x-y|^{2s-1} = \frac{1}{\Gamma(s) c(n,-s)} |x-y|^{n-1} \int\limits_{0}^{\infty} k_{t}(x,y) \, \frac{\D t}{t^{1-s}} \quad \mbox{for all $x \ne y \in \mathbb{R}^{n}$.} 
\end{equation*}
Therefore the X-ray transform \eqref{eq:X-ray} and the corresponding averaging operator \tteal{can be rephrased as} 
\begin{equation}
\begin{aligned}
(\mathcal{X}_{s}f)(x,\xi) &= \frac{1}{\Gamma(s) c(n,-s)} \int\limits_{0}^{\infty} \tau^{n-1} \bigg( \int\limits_{0}^{\infty} k_{t}(x,x+\tau\xi) \, \frac{\D t}{t^{1-s}} \bigg) f(x + \tau\xi) \, \D \tau \\
\left( (-\Delta)^{-s}f \right)(x) &\equiv (\mathcal{A}_{s}f)(x) = \frac{1}{\Gamma(s)} \int_{\mathbb{R}^{n}} \bigg( \int\limits_{0}^{\infty} k_{t}(x,y) \, \frac{\D t}{t^{1-s}} \bigg) f(y) \,\D y
\end{aligned} \label{eq:X-ray-heat-kernel}
\end{equation}
for all $f \in \mathcal{S}(\mathbb{R}^{n})$. Based on the above observation, it is possible \tteal{to define} negative power of elliptic operators with some suitable domain, see Appendix~{\rm \ref{appen:Negative-power-elliptic}} for more details. 
\begin{remark}
We have defined fractional MRT in \eqref{eq:X-ray} by taking integration over half lines. However one can define fractional MRT by taking integration over whole lines in the following way:
\begin{align*}
    J^sf(x,\xi):= \int_{\Rb} |t|^s \, f(x+t\xi)\, \D t  \quad \mbox{for $f\in \Sc (\Rn)$ and $-1<s<1$}. 
\end{align*}
Note that, the weight function we have considered here is not smooth at the origin. But after taking  the average over $ \Sn$ we see that 
$A_s=\int_{\Sn} J^s(x,\xi)\, \D S(\xi) $ is \tteal{same as $ \mathcal{A}_{\frac{s+1}{2}}$ up to a constant}. Using this we can study the UCP of $A_s$ as well as  $J^s$.
\end{remark}

\subsection{Antilocality property of the weighted ray transform\label{subsec:antilocality}}

Using \cite[Proposition~1.9]{GR19unique} (which is valid for some class of general elliptic operators) and the smoothing argument as in the proof of \cite[Theorem~1.2]{GSU-fractional}, we have the following lemma. 

\begin{lemma} \label{lem:UCP-open}
Let $n \ge 1$ be an integer, $s > 0$ with $s \notin \mathbb{Z}$. Let $u \in H^{r}(\mathbb{R}^{n})$ for some $r \in \mathbb{R}$. If 
\[
u = (-\Delta)^{s}u = 0 \quad \text{in some open set in $\mathbb{R}^{n}$,}
\]
then $u \equiv 0$ in $\mathbb{R}^{n}$. 
\end{lemma}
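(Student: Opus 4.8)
The plan is to reduce the statement to the known unique continuation result of Ghosh--Rüland (\cite[Proposition~1.9]{GR19unique}), which applies to solutions in $H^{s}(\mathbb{R}^{n})$, by first upgrading the regularity of $u$ near the open set where the Cauchy data vanish. Write $s = \lfloor s \rfloor + \sigma$ with $\sigma \in (0,1)$, and split $(-\Delta)^{s} = (-\Delta)^{\sigma} (-\Delta)^{\lfloor s \rfloor}$; since $(-\Delta)^{\lfloor s \rfloor}$ is a local (differential) operator, it suffices to treat $v := (-\Delta)^{\lfloor s \rfloor} u$, which satisfies $(-\Delta)^{\sigma} v = 0$ in the given open set $V$, and also $v = 0$ in $V$ because $u$ vanishes to infinite order there (as $u \equiv 0$ on an open set implies all its derivatives vanish on that set, so every local differential operator applied to $u$ vanishes on $V$). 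Thus without loss of generality we may assume $s \in (0,1)$.

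Next I would address the regularity gap: the hypothesis only gives $u \in H^{r}(\mathbb{R}^{n})$ for some possibly very negative $r$, whereas \cite[Proposition~1.9]{GR19unique} is stated for $H^{s}$-solutions. The key point is that the equation $(-\Delta)^{s} u = 0$ in $V$ is elliptic away from the singular support structure: since $(-\Delta)^{s}$ is a classical elliptic pseudodifferential operator of order $2s$, elliptic regularity gives $u \in C^{\infty}(V)$ locally, but more is needed globally. Here I would invoke the smoothing argument from the proof of \cite[Theorem~1.2]{GSU-fractional}: pick a point $x_{0} \in V$ and a small ball $B \Subset V$ around it, choose a cutoff $\chi \in C_{c}^{\infty}(B)$ with $\chi \equiv 1$ near $x_{0}$, and consider $w := (-\Delta)^{-s}\big( \chi (-\Delta)^{s} u \big)$ where $(-\Delta)^{-s}$ denotes the Riesz potential (which makes sense after subtracting, if necessary, a suitable polynomial, or by working with $\langle D\rangle^{-2s}$ instead and absorbing the lower-order correction). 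One checks that $u - w$ satisfies $\langle D \rangle^{2s}(u-w)$ has controlled regularity, and a bootstrap shows $u \in H^{s}_{\mathrm{loc}}$ near $x_{0}$; combined with the global decay this upgrades $u$ to lie in $H^{s}(\mathbb{R}^{n})$ after a further localization/interpolation step, or at least to a space to which the Ghosh--Rüland statement applies.

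Once $u \in H^{s}(\mathbb{R}^{n})$ (or the appropriate space) with $u = (-\Delta)^{s}u = 0$ in the open set $V$, the strong unique continuation property \cite[Proposition~1.9]{GR19unique} applies directly and yields $u \equiv 0$ in $\mathbb{R}^{n}$, which is the desired conclusion. I expect the main obstacle to be the regularity upgrade: carefully making sense of $(-\Delta)^{-s}$ acting on a compactly supported distribution when $2s$ exceeds the relevant thresholds (so that the Riesz potential needs polynomial corrections, or one must instead use the Bessel potential $\langle D\rangle^{-2s}$ and separately handle the resulting zeroth-order perturbation), and ensuring the bootstrap genuinely lands $u$ in a space covered by \cite[Proposition~1.9]{GR19unique} rather than merely $C^{\infty}_{\mathrm{loc}}$ near one point. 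The local-to-global passage — going from smoothness near $x_{0}$ to membership in $H^{s}(\mathbb{R}^{n})$ — is the delicate part, but it is exactly the content borrowed from \cite[Theorem~1.2]{GSU-fractional}, so I would cite that argument rather than reproduce it.
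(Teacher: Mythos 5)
Your opening reduction to $s\in(0,1)$ via $(-\Delta)^{s}=(-\Delta)^{\sigma}(-\Delta)^{\lfloor s\rfloor}$ is a legitimate idea that the paper does not use (the paper invokes \cite[Proposition~1.9]{GR19unique} directly for all non-integer $s>0$, which is precisely the point of that reference), but as written it is incomplete: after you conclude $v=(-\Delta)^{\lfloor s\rfloor}u\equiv 0$ you still must recover $u\equiv 0$. This does follow — $\operatorname{supp}\hat{u}\subseteq\{0\}$ forces $u$ to be a polynomial, and no nonzero polynomial lies in any $H^{r}(\mathbb{R}^{n})$ — but the step has to be said. Note also that once you are at $\sigma\in(0,1)$ you could simply cite \cite[Theorem~1.2]{GSU-fractional}, which is stated verbatim for $u\in H^{r}(\mathbb{R}^{n})$ with arbitrary $r$, and your entire second step would become unnecessary.

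The genuine gap is in your treatment of the regularity issue. Your plan is to \emph{upgrade $u$ itself} to $H^{s}(\mathbb{R}^{n})$ (or $H^{s}_{\rm loc}$ near a point of $V$ ``combined with global decay''), and this cannot work: near $x_{0}\in V$ the function $u$ is identically zero, so local elliptic regularity there is vacuous, and the hypotheses $u|_{V}=(-\Delta)^{s}u|_{V}=0$ give no mechanism whatsoever for improving the Sobolev regularity of $u$ \emph{away from} $V$ — short of proving $u\equiv 0$, which is the conclusion. Any attempt to show $u\in H^{s}(\mathbb{R}^{n})$ as an intermediate step is therefore circular. The ``smoothing argument'' of \cite[Theorem~1.2]{GSU-fractional} that the paper alludes to is not a bootstrap on $u$: one mollifies, setting $u_{\epsilon}:=u*\rho_{\epsilon}\in H^{\infty}(\mathbb{R}^{n})$; since $(-\Delta)^{s}$ is a Fourier multiplier it commutes with convolution, so for $\epsilon$ small both $u_{\epsilon}$ and $(-\Delta)^{s}u_{\epsilon}=((-\Delta)^{s}u)*\rho_{\epsilon}$ vanish on a slightly shrunken open subset of $V$; the smooth-case antilocality then gives $u_{\epsilon}\equiv 0$ for all small $\epsilon$, and letting $\epsilon\to 0$ yields $u\equiv 0$. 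You should replace your cutoff/Riesz-potential bootstrap by this mollification argument; as proposed, that step would fail.
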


Based on Lemma~{\rm \ref{lem:averaging-operator-fractional-Laplacian}}, we can obtain the antilocality property of the weighted ray transform $\mathcal{A}_{s}$ using the antilocality property of the fractional Laplacian. 
\fractional*

\begin{remark}
If $\big( \mathcal{X}_{s}f \big)(x,\xi) = 0$ for all $x \in U$ and $\xi \in \mathbb{S}^{n-1}$, then $\mathcal{A}_{s}f = 0$ in $U$. 
\end{remark}

\begin{proof}[Proof of Theorem~{\rm \ref{thm:UCP-open-Xray}}]
Using Lemma~{\rm \ref{lem:averaging-operator-fractional-Laplacian}}, we know that 
\[
g := \mathcal{A}_{s}f \in L^{2}(\mathbb{R}^{n}), \quad g = (-\Delta)^{s}g = 0 \text{ in $U$.}
\]
From Lemma~{\rm \ref{lem:UCP-open}} we know that $g \equiv 0$ in $\mathbb{R}^{n}$. Consequently, again using Lemma~{\rm \ref{lem:averaging-operator-fractional-Laplacian}} we conclude $f \equiv (-\Delta)^{s}g \equiv 0$ in $\mathbb{R}^{n}$. 
\end{proof}

We also have the following interesting observation. 

\begin{lemma}[A support theorem for $\mathcal{A}_{s}$]\label{thm:support_thm_A_s}
Let $n \ge 2$ be an integer, $0 < s < 1$ and let $f \in W^{1,p}(\mathbb{R}^{n})$ for some $\max \{1,\frac{2n}{n+4s} \} \le p < \frac{n}{2s}$. If there exists a nonempty bounded Lipschitz domain $\Omega$ such that 
\[
\mathcal{A}_{s}f=0 \text{ in $\mathbb{R}^{n}\setminus\overline{\Omega}$},\quad f=0 \text{ in $\Omega$,}
\]
then $f \equiv 0$ in $\mathbb{R}^{n}$. 
\end{lemma}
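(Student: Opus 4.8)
Write $g := \mathcal{A}_s f$. The plan is to show that $g \in H^1_0(\Omega)$, that $(-\Delta)^s g = f$, and then to run an energy identity that exploits the complementary supports: $f$ vanishes on $\Omega$, while $g$ is supported in $\overline\Omega$. In particular this argument will not need the antilocality Lemma~\ref{lem:UCP-open}; only the positivity of the quadratic form of $(-\Delta)^s$ is used.

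First I would record the regularity and support of $g$. Since $\max\{1,\frac{2n}{n+4s}\}\le p<\frac{n}{2s}$, the mapping property \eqref{eq:Riesz-domain-range1} (with $m=1$) gives $g\in W^{1,q}(\mathbb{R}^n)$ with $\frac1q=\frac1p-\frac{2s}{n}$, and the lower bound on $p$ is precisely what forces $q\ge 2$. By hypothesis $g=0$ in the open set $\mathbb{R}^n\setminus\overline\Omega$, so $\operatorname{supp}g\subseteq\overline\Omega$, which is compact. Combining $q\ge 2$ with compact support, Hölder's inequality on $\overline\Omega$ gives $g,\nabla g\in L^2(\mathbb{R}^n)$, hence $g\in H^1(\mathbb{R}^n)$ with $g=0$ a.e.\ outside $\Omega$; since $\Omega$ is a bounded Lipschitz domain, this is equivalent to $g\in H^1_0(\Omega)$, and $H^1_0(\Omega)\hookrightarrow H^1(\mathbb{R}^n)\hookrightarrow H^s(\mathbb{R}^n)$ because $0<s<1$. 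I would also note $(-\Delta)^s g=(-\Delta)^s\mathcal{A}_s f=f$, which is the Fourier-multiplier identity underlying \eqref{eq:semigroup-definition-negative-Laplacian} and consistent with Lemma~\ref{lem:averaging-operator-fractional-Laplacian}.

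The core step is then the energy identity. Consider the continuous bilinear form $B_s(u,v):=\int_{\mathbb{R}^n}(-\Delta)^{s/2}u\,(-\Delta)^{s/2}v\,\D x$ on $H^s(\mathbb{R}^n)$, which satisfies $B_s(u,v)=\langle(-\Delta)^s u,v\rangle$ by Parseval. For any $v\in C_c^\infty(\Omega)$ one has $B_s(g,v)=\langle(-\Delta)^s g,v\rangle=\langle f,v\rangle=0$, since $f=0$ in $\Omega$. Choosing $v_j\in C_c^\infty(\Omega)$ with $v_j\to g$ in $H^1(\mathbb{R}^n)$, hence in $H^s(\mathbb{R}^n)$, continuity of $B_s$ gives $B_s(g,g)=\lim_j B_s(g,v_j)=0$, so $\|(-\Delta)^{s/2}g\|_{L^2}^2=0$, whence $\widehat{g}=0$ a.e.\ and $g\equiv 0$ in $\mathbb{R}^n$. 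Therefore $f=(-\Delta)^s g\equiv 0$ in $\mathbb{R}^n$.

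I expect the only genuinely delicate point to be the structural step $g\in H^1_0(\Omega)$: it requires combining the sharp Riesz-potential mapping bound (this is where the hypothesis $p\ge\max\{1,\frac{2n}{n+4s}\}$ is needed, to land on the $L^2$-based scale), the exterior vanishing that makes $g$ compactly supported, and the characterization of $H^1_0$ of a bounded Lipschitz domain as the $H^1(\mathbb{R}^n)$-functions vanishing a.e.\ outside it. Everything afterward — the Parseval identity for $B_s$ and the density approximation by $C_c^\infty(\Omega)$ — is routine.
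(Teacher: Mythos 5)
Your argument is correct and follows the same overall skeleton as the paper's proof: use the mapping property \eqref{eq:Riesz-domain-range1} together with the lower bound $p\ge\frac{2n}{n+4s}$ to land $g=\mathcal{A}_s f$ on the $L^2$ scale, use the exterior vanishing and compact support to get $g\in H^1(\mathbb{R}^n)\subset H^s(\mathbb{R}^n)$ supported in $\overline\Omega$, observe $(-\Delta)^s g=f=0$ in $\Omega$, conclude $g\equiv 0$, and finish with $f=(-\Delta)^s g\equiv 0$. The one place you diverge is the step ``$g\equiv 0$'': the paper disposes of it by citing that $0$ is not a Dirichlet eigenvalue of $(-\Delta)^s$ on $\Omega$ (via \cite[Proposition~3.3]{LL19GlobalUniquenessSemilinear} or \cite[Corollary~5.2]{RosOton16DirichletProblem}), whereas you prove it on the spot by the Parseval identity $B_s(g,g)=\lim_j\langle f,v_j\rangle=0$ with $v_j\in C_c^\infty(\Omega)$ approximating $g$ in $H^1_0(\Omega)$. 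This is exactly the energy argument underlying the cited spectral fact, so nothing is lost; what your version buys is self-containedness, at the modest price of having to justify $g\in H^1_0(\Omega)$ via the characterization of $H^1_0$ of a bounded Lipschitz domain as the $H^1(\mathbb{R}^n)$-functions vanishing a.e.\ outside it --- which is where the Lipschitz hypothesis is genuinely used in your route (the paper's citation absorbs the corresponding regularity of $\partial\Omega$ into the quoted result). Both proofs correctly avoid the antilocality Lemma~\ref{lem:UCP-open}.
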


\begin{proof}
Using \eqref{eq:Riesz-domain-range1}, we know that $g := \mathcal{A}_{s}f \in W^{1,p}(\mathbb{R}^{n})$ for some $p \ge 2$. Since $g$ has compact support and $0 < s < 1$, then $g \in H^{1}(\mathbb{R}^{n}) \subset H^{s}(\mathbb{R}^{n})$. From \eqref{eq:semigroup-definition-negative-Laplacian} we have 
\[
(-\Delta)^{s} g = 0 \text{ in $\Omega$} ,\quad g=0 \text{ in $\mathbb{R}^{n}\setminus\overline{\Omega}$.}
\]
Since 0 is not an eigenvalue of $(-\Delta)^{s}$ (a consequence of \cite[Proposition~3.3]{LL19GlobalUniquenessSemilinear} or \cite[Corollary~5.2]{RosOton16DirichletProblem}), then we know that $g \equiv 0$ in $\mathbb{R}^{n}$. Consequently, again using Lemma~{\rm \ref{lem:averaging-operator-fractional-Laplacian}} we conclude $f \equiv (-\Delta)^{s}g \equiv 0$ in $\mathbb{R}^{n}$. 
\end{proof}

\subsection{Antilocality property of the cone transform\label{subsec:antilocaloty-cone-transform}}

This weighted ray transform is related to the cone transform. Cone transform appears in different imaging approaches, most notably in the modeling of data provided by the so-called Compton camera, which has unique applications in domains such as medical and industrial imaging, homeland security, and gamma ray astronomy, see \cite{Kuchment_IPI}. Let $\mathfrak{C}(u,\beta,\psi)$ be the cone with vertex $u \in \mathbb{R}^{n}$, central axis $\beta\in \mathbb{S}^{n-1}$ and opening angle $\psi\in (0,\pi)$, that is, 
\[
\mathfrak{C}(u,\beta,\psi) := \begin{Bmatrix}\begin{array}{l|l} x \in \mathbb{R}^{n} & (x-u)\cdot \beta = |x-u| \cos \psi \end{array}\end{Bmatrix}.
\]
Following \cite{Kuchment_IPI}, we next introduce the  weighted cone transform.
\begin{definition}
For each $-1 < k < n-1$ and $f\in \mathcal{S}(\Rn)$, the $ k$-th weighted cone transform is defined as:
\begin{align*}
(\mathcal{C}^{k}f) (u,\beta,\psi)=\int\limits_{\mathfrak{C}(u,\beta,\psi)} f(x) \, |x-u|^{k-n+2}\, \D S_{x},
\end{align*}
where $\D S$ is the surface measure on the cone $\mathfrak{C}(u,\beta,\psi)$. 
\end{definition}

It is worth-mentioning that the cone transform is related to the momentum ray transform $I^{k} \equiv \mathcal{X}_{\frac{k+1}{2}}$, precisely, 
\begin{equation}
\int\limits_{0}^{\pi} (\mathcal{C}^{k}f)(u,\beta,\psi) h(\cos(\psi)) \,\D \psi = \int\limits_{\mathbb{S}^{n-1}} (I^{k} f)(u,\sigma) h(\sigma \cdot \beta) \, \D \sigma \quad \text{for all $f \in \mathcal{S}(\mathbb{R}^{n})$} \label{eq:cone-transform-ray-transform}
\end{equation}
for all distribution $h \in \mathcal{D}'(\mathbb{R}^{1})$ which are regular near $t=\pm 1$, see \cite[(19)]{Kuchment_IPI}. By writing $s = \frac{k+1}{2} \in (0,\frac{n}{2})$ and choosing $h \equiv c(n,-s)$ in \eqref{eq:cone-transform-ray-transform} yields 
\begin{equation}
c(n,-s)\int\limits_{0}^{\pi} (\mathcal{C}^{2s-1}f)(u,\beta,\psi) \, \D \psi = c(n,-s)\int\limits_{\mathbb{S}^{n-1}} (\mathcal{X}_{s}f)(u,\sigma) \, \D \sigma = (\mathcal{A}_{s}f)(u) \quad \text{for all $u \in \mathbb{R}^{n}$}
\end{equation}
for all $f \in \mathcal{S}(\mathbb{R}^{n})$. Therefore, as a corollary of Theorem~{\rm \ref{thm:UCP-open-Xray}}, we immediately obtain the antilocality property for cone transform. 

\begin{corollary}\label{cor:ucp_for_cone_transform}
Let $n \ge 2$ be an integer, $0 < s \le \frac{n}{4}$ with $s \neq \mathbb{Z}$ and let $f \in L^{\frac{2n}{n+4s}}(\mathbb{R}^{n})$. Suppose that there exists $\beta \in \mathbb{S}^{n-1}$ and a non-empty open set $U$ in $\mathbb{R}^{n}$ such that $f=0$ in $U$ and 
\[
(\mathcal{C}^{2s-1}f)(u,\beta,\psi) = 0 \quad \text{for all $(u,\psi)\in U\times (0,\pi)$,}
\]
then $f \equiv 0$ in $\mathbb{R}^{n}$. 
\end{corollary}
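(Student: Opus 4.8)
The plan is to deduce Corollary~\ref{cor:ucp_for_cone_transform} from Theorem~\ref{thm:UCP-open-Xray} via the intertwining identity \eqref{eq:cone-transform-ray-transform} between the weighted cone transform $\Cc^{2s-1}$ and the fractional momentum ray transform $\Xc_{s} = I^{2s-1}$. The guiding observation is that averaging the cone transform over its opening angle $\psi$ recovers, up to the constant $c(n,-s)$, the Riesz potential $\Ac_{s}f = (-\Delta)^{-s}f$, which is exactly the quantity controlled by Theorem~\ref{thm:UCP-open-Xray}.

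First I would specialize \eqref{eq:cone-transform-ray-transform} to the constant weight $h \equiv c(n,-s)$, which is admissible since a constant function is smooth and hence regular near $t = \pm 1$. Using $I^{2s-1} = \Xc_{s}$ and the definition of $\Ac_{s}$, this produces
\[
c(n,-s)\int_{0}^{\pi}(\Cc^{2s-1}f)(u,\beta,\psi)\,\D\psi = c(n,-s)\int_{\Sn}(\Xc_{s}f)(u,\sigma)\,\D\sigma = (\Ac_{s}f)(u) \quad \text{for all } u \in \Rn,
\]
i.e.\ the display immediately preceding the statement. Hence the vanishing hypothesis $(\Cc^{2s-1}f)(u,\beta,\psi) = 0$ for all $(u,\psi) \in U \times (0,\pi)$ forces $\Ac_{s}f = 0$ in $U$; combined with the second hypothesis $f = 0$ in $U$, the two functions $f$ and $\Ac_{s}f$ both vanish on the nonempty open set $U$. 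Since moreover $f \in L^{\frac{2n}{n+4s}}(\Rn)$ and $0 < s \le \frac{n}{4}$ with $s \notin \mathbb{Z}$, Theorem~\ref{thm:UCP-open-Xray} applies verbatim and yields $f \equiv 0$ in $\Rn$, which is the claim.

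The single point requiring attention is that \eqref{eq:cone-transform-ray-transform} was established for $f \in \Sc(\Rn)$, whereas here $f$ lies only in $L^{\frac{2n}{n+4s}}(\Rn)$; I would close this gap by density. Choosing $f_{j} \in \Sc(\Rn)$ with $f_{j} \to f$ in $L^{\frac{2n}{n+4s}}(\Rn)$, the boundedness $\Ac_{s} : L^{\frac{2n}{n+4s}}(\Rn) \to L^{2}(\Rn)$ from \eqref{eq:Riesz-domain-range} (with $p = \frac{2n}{n+4s}$, $q = 2$) gives $\Ac_{s}f_{j} \to \Ac_{s}f$ in $L^{2}$, so it suffices to check that the angular average $\int_{0}^{\pi}(\Cc^{2s-1}f)(u,\beta,\psi)\,\D\psi$ is well defined for such $f$ and still equals $c(n,-s)^{-1}\Ac_{s}f$. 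This comes down to justifying, via the standard decomposition of the surface measure on $\Sn$ along the polar angle about $\beta$, the Fubini interchange between the $\psi$-integration and the integration over the cone; the relevant density $|x-u|^{2s-n+1}$ combined with the $t^{n-2}$ factor from the surface element becomes $t^{2s-1}$, which is locally integrable precisely because $2s > 0$, and the reconstituted integral over $\Rn$ is the Riesz potential, finite for $f \in L^{\frac{2n}{n+4s}}(\Rn)$ by weak Young's inequality. I expect this measure-theoretic bookkeeping to be the only real obstacle; once it is in place, the reduction to Theorem~\ref{thm:UCP-open-Xray} described above closes the argument at once.
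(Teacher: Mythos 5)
Your proposal is correct and follows exactly the paper's route: integrate the cone transform over the opening angle via the identity \eqref{eq:cone-transform-ray-transform} with constant weight $h \equiv c(n,-s)$ to conclude $\mathcal{A}_{s}f = 0$ in $U$, then invoke Theorem~\ref{thm:UCP-open-Xray}. The additional care you take in justifying the identity for $f \in L^{\frac{2n}{n+4s}}(\mathbb{R}^{n})$ rather than only for Schwartz functions (via Fubini--Tonelli on the polar decomposition) is a detail the paper leaves implicit, and your treatment of it is sound.
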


\appendix

\section{\label{appen:Negative-power-elliptic}Negative power of elliptic operator}

The main theme of this appendix is to explain the connection between some weighted x-ray transform with negative power of some class of elliptic operators on some natural domain. 
Let $A(x) = (a_{ij}(x))_{i,j=1}^{n} \in (C^{\infty}(\mathbb{R}^{n}))^{n \times n}$ such that $a_{ij}=a_{ji}$ for all $i,j=1,\cdots,n$.   Suppose that $A$ satisfies the following ellipticity condition: there exists a constant $0 < c < 1$ such that 
\begin{equation}
c |\xi|^{2} \le \xi \cdot A(x) \xi \le c^{-1} |\xi|^{2} \quad \text{for all } x \in \mathbb{R}^{n}. \label{eq:ellipticity}
\end{equation}
It is known that the operator $-\nabla \cdot A \nabla$ with domain $H^{2}(\mathbb{R}^{n})$ is the maximal extension such that it is self-adjoint\footnote{The operator $-\nabla \cdot A \nabla$ is not self-adjoint on the domain $C_{c}^{\infty}(\mathbb{R}^{n})$.} and densly defined in $L^{2}(\mathbb{R}^{n})$, see \cite[Theorem~4.6]{Gri09HeatKernelManifold}. Using the spectral theorem for self-adjoint operator in a real Hilbert space as in \cite[Appendix~A.5.4]{Gri09HeatKernelManifold}, there exists a unique \emph{spectral resolution} (also known as \emph{resolution of identity}) $\{E_{\lambda}\}$ in $L^{2}(\mathbb{R}^{n})$ corresponding to $-\nabla \cdot A \nabla$ such that 
\[
\left( - \nabla \cdot A \nabla f,g \right)_{L^{2}(\mathbb{R}^{n})} = \int_{0}^{\infty} \lambda \, \D (E_{\lambda}f,g)_{L^{2}(\mathbb{R}^{n})}
\]
for all $f \in H^{2}(\mathbb{R}^{n})$ and $g \in L^{2}(\mathbb{R}^{n})$. The detailed properties of spectral resolution can be found in \cite[Appendix~A.5.3]{Gri09HeatKernelManifold}, here we only state some of them. Let $\lambda \mapsto \varphi(\lambda)$ be a Borel function on $\mathbb{R}$. By utilizing Riesz representation theorem (see also \cite[(A.38)]{Gri09HeatKernelManifold}), one can define the self-adjoint operator $\varphi(-\nabla \cdot A \nabla)$ by 
\begin{equation}
\left( \varphi(-\nabla \cdot A \nabla)f ,g \right)_{L^{2}(\mathbb{R}^{n})} = \int\limits_{0}^{\infty} \varphi(\lambda) \, \D (E_{\lambda}f,g)_{L^{2}(\mathbb{R}^{n})} \equiv \bigg( \int\limits_{0}^{\infty} \varphi(\lambda) \, \D E_{\lambda}f ,g \bigg)_{L^{2}(\mathbb{R}^{n})} \label{eq:Borel-operator}
\end{equation}
for all $g \in L^{2}(\mathbb{R}^{n})$ and 
\begin{equation}
f \in {\rm dom}\, \left( \varphi(-\nabla \cdot A \nabla) \right) := \begin{Bmatrix}\begin{array}{l|l} f \in L^{2}(\mathbb{R}^{n}) & \displaystyle{\int\limits_{0}^{\infty}} |\varphi(\lambda)|^{2} \, \D \| E_{\lambda} f \|_{L^{2}(\mathbb{R}^{n})}^{2} < \infty \end{array} \end{Bmatrix}. \label{eq:domain-definition}
\end{equation}
One may define $(-\nabla\cdot A\nabla)^{-s}$ using the mapping $\varphi(\lambda) = \lambda^{-s}$. But however in general $\nabla\cdot A\nabla:H^{2}(\mathbb{R}^{n}) \rightarrow L^{2}(\mathbb{R}^{n})$ is not injective, therefore the domain ${\rm dom}\,((-\nabla\cdot A\nabla)^{-s})$ is somehow artificial. The main theme of this appendix is to define $(-\nabla\cdot A\nabla)^{-s}$ on a suitable domain. 

By using \cite[(A.32)]{Gri09HeatKernelManifold}, we know that ${\rm dom}\,(e^{t\nabla \cdot A \nabla}) = L^{2}(\mathbb{R}^{n})$ for all $t>0$. Based on this we can define the action of the heat semigroup and of the fractional powers (of order $s > 0$) of $-\nabla \cdot A \nabla$ as 
\[
\begin{aligned}
e^{t \nabla \cdot A \nabla} f &:= \int\limits_{0}^{\infty} e^{-t\lambda} \, \D E_{\lambda}f \quad \text{for all $f\in L^{2}(\mathbb{R}^{n})$ and $t > 0$,} \\
(-\nabla \cdot A \nabla)^{s} f &:= \int\limits_{0}^{\infty} \lambda^{s} \, \D E_{\lambda}f \quad \text{for all $f \in {\rm dom}\,\left( (-\nabla\cdot A\nabla)^{s} \right)$.}
\end{aligned}
\]
Using \cite[Theorems~7.6, 7.7 and 7.13]{Gri09HeatKernelManifold}, we know that the bounded operator $e^{t\nabla \cdot A \nabla}$ admits a \emph{unique} symmetric (heat) kernel $k_{t}^{A}(x,y)$: For each $0 <t < \infty$ and $f \in L^{2}(\mathbb{R}^{n})$, we have 
\begin{equation}
\big( e^{t\nabla \cdot A \nabla}f \big) (x) = \int\limits_{\mathbb{R}^{n}} k_{t}^{A}(x,y)f(y)\,\D y \quad \mbox{for all}\quad x\in \mathbb{R}^{n}, \label{eq:heat-kernel-anisotropic}
\end{equation}
see also \cite[Theorem~2]{QX21HeatKernel}. In addition, there exist positive constants $b_{1},b_{2},c_{1},c_{2}$ such that 
\begin{equation}
c_{1} t^{-\frac{n}{2}} \exp \bigg( -b_{1} \frac{|x-y|^{2}}{t} \bigg) \le k_{t}^{A}(x,y) = k_{t}^{A}(y,x) \le c_{2} t^{-\frac{n}{2}} \exp \bigg( -b_{2} \frac{|x-y|^{2}}{t} \bigg), \label{eq:Gaussian-bound}
\end{equation}
for all $t>0$ and $x,y \in \mathbb{R}^{n}$, see \cite[Chapter~3]{Dav89HeatKernelSpectral}. We refer to \cite{GT12HeatKernel} for the proof of two-sided estimates for heat kernels $k_{t}^{M}$ on abstract metric measure spaces $M$, extending those already known in Riemannian manifolds and in various types of fractals. Moreover, the heat kernel is also \emph{conservative} (or \emph{stochastically complete})\footnote{These terminologies are found in \cite{GHL14HeatKernel,GT12HeatKernel}. We also refer to \cite[Theorem~1.4.4]{Dav89HeatKernelSpectral} for an abstract result.}, that is,
\[
\int\limits_{\mathbb{R}^{n}} k_{t}^{A}(x,y) \, \D y = 1 \quad \text{for all $t>0$ and $x\in \mathbb{R}^{n}$.}
\]
In view of \eqref{eq:X-ray-heat-kernel}, now it is natural to consider the following definition. 

\begin{definition}[Weighted x-ray transform]\label{def:anisotropic-X-ray}
Let $n \ge 2$ be an integer, and let $0 < s < \frac{n}{2}$. For each $f \in \mathcal{S}(\mathbb{R}^{n})$, we define 
\begin{equation*}
\big( \mathcal{X}_{s}^{A}f \big) (x,\xi) := \int\limits_{0}^{\infty} w(x,x+\tau \xi) f(x + \tau \xi) \, \D \tau, 
\end{equation*}  
where the weight is given by 
\begin{equation*}
w(x,y) := \frac{1}{\Gamma(s) c(n,-s)} |y-x|^{n-1} \int\limits_{0}^{\infty} k_{t}^{A}(x,y) \, \frac{\D t}{t^{1-s}} \quad \mbox{for all $x \neq y \in \mathbb{R}^{n}$,}
\end{equation*}
and the average over the sphere $\mathbb{S}^{n-1}$ is defined by 
\begin{equation}
\big( \mathcal{A}_{s}^{A} f \big) (x) := c(n,-s) \int\limits_{\mathbb{S}^{n-1}} \big( \mathcal{X}_{s}^{A} f \big) (x,\xi) \, \D S_{\xi} \equiv \frac{1}{\Gamma(s)} \int_{\mathbb{R}^{n}} \bigg( \int\limits_{0}^{\infty} k_{t}^{A}(x,y) \, \frac{\D t}{t^{1-s}} \bigg) f(y) \,\D y. \label{eq:average-operator-anisotropic}
\end{equation}
\end{definition}

From \eqref{eq:Riesz-domain-range} and \eqref{eq:Gaussian-bound}, we can easily conclude the following lemma. 
\begin{lemma} \label{lem:boundedness-average-operator-anisotropic}
Let $n \ge 2$ be an integer and let $1 \le p < \frac{n}{2s}$ provided $0 < s < \frac{n}{2}$. Let $A(x) = (a_{ij}(x))_{i,j=1}^{n} \in (C^{\infty}(\mathbb{R}^{n}))^{n \times n}$ such that $a_{ij}=a_{ji}$ for all $i,j=1,\cdots,n$ and satisfies the ellipticity condition \eqref{eq:ellipticity}. Then 
\[
\mathcal{A}_{s}^{A} : L^{p}(\mathbb{R}^{n}) \rightarrow L^{q}(\mathbb{R}^{n}) \text{ is bounded,} 
\]
where $\frac{1}{q} = \frac{1}{p} - \frac{2s}{n}$. In particular when $0 < s < \frac{n}{4}$, the operators 
\[
\begin{aligned}
&\mathcal{A}_{s}^{A} : L^{2}(\mathbb{R}^{n}) \rightarrow L^{\frac{2n}{n-4s}}(\mathbb{R}^{n}) \\
&\mathcal{A}_{s}^{A} : L^{\frac{2n}{n+4s}}(\mathbb{R}^{n}) \rightarrow L^{2}(\mathbb{R}^{n})
\end{aligned}
\]
are bounded. 
\end{lemma}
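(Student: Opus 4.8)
The plan is to deduce the mapping properties of $\mathcal{A}_{s}^{A}$ from those of the classical Riesz potential $\mathcal{A}_{s}$ recorded in \eqref{eq:Riesz-domain-range}, via a pointwise comparison of the two integral kernels. Starting from the representation \eqref{eq:average-operator-anisotropic}, I would write
\[
\big( \mathcal{A}_{s}^{A} f \big)(x) = \int_{\mathbb{R}^{n}} K^{A}(x,y)\, f(y) \, \D y, \qquad K^{A}(x,y) := \frac{1}{\Gamma(s)} \int_{0}^{\infty} k_{t}^{A}(x,y) \, \frac{\D t}{t^{1-s}},
\]
and estimate the kernel $K^{A}$ from above. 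Using only the Gaussian upper bound in \eqref{eq:Gaussian-bound}, for $x \neq y$ one gets
\[
0 \le K^{A}(x,y) \le \frac{c_{2}}{\Gamma(s)} \int_{0}^{\infty} t^{s - 1 - \frac{n}{2}} \exp\!\Big( -b_{2}\, \frac{|x-y|^{2}}{t} \Big) \, \D t ;
\]
this integral converges because near $t = 0$ the exponential factor decays faster than any power (here $x \neq y$ is used), and near $t = \infty$ the factor $t^{s-1-n/2}$ is integrable precisely since $s < \tfrac{n}{2}$ forces $s - 1 - \tfrac{n}{2} < -1$.

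Next I would evaluate the $t$-integral exactly. The substitution $u = b_{2}|x-y|^{2}/t$ converts it into a Gamma integral,
\[
\int_{0}^{\infty} t^{s-1-\frac{n}{2}} e^{-b_{2}|x-y|^{2}/t} \, \D t = b_{2}^{\,s-\frac{n}{2}}\, \Gamma\!\Big( \tfrac{n}{2} - s \Big)\, |x-y|^{2s-n},
\]
which is legitimate because $\tfrac{n}{2} - s > 0$. Hence $K^{A}(x,y) \le C_{1}(n,s,b_{2},c_{2})\, |x-y|^{2s-n}$ for all $x \neq y$, so that
\[
\big| \mathcal{A}_{s}^{A} f(x) \big| \le C_{1}(n,s,b_{2},c_{2}) \int_{\mathbb{R}^{n}} |x-y|^{2s-n}\, |f(y)| \, \D y = C_{2}(n,s,b_{2},c_{2})\, \big( \mathcal{A}_{s}|f| \big)(x) \quad \text{for a.e.\ } x \in \mathbb{R}^{n},
\]
where $\mathcal{A}_{s}$ is the Riesz potential appearing in \eqref{eq:Riesz-domain-range} and the universal constant $c(n,-s)$ has been absorbed. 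Applying \eqref{eq:Riesz-domain-range} with $1 \le p < \tfrac{n}{2s}$ and $\tfrac1q = \tfrac1p - \tfrac{2s}{n}$ then yields that $\mathcal{A}_{s}^{A} : L^{p}(\mathbb{R}^{n}) \to L^{q}(\mathbb{R}^{n})$ is bounded.

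Finally, for the two displayed special cases I would simply specialise: when $0 < s < \tfrac{n}{4}$, the exponent $p = 2$ lies in $[1,\tfrac{n}{2s})$ and produces $q = \tfrac{2n}{n-4s}$, while $p = \tfrac{2n}{n+4s}$ also lies in $[1,\tfrac{n}{2s})$ and produces $\tfrac1q = \tfrac{n+4s}{2n} - \tfrac{2s}{n} = \tfrac12$, i.e.\ $q = 2$. I do not expect a genuine obstacle in this argument: the kernel computation is elementary, and the only points deserving care are the convergence of the $t$-integral --- which is exactly where the hypothesis $s < \tfrac{n}{2}$ enters --- and, at the borderline exponent $p = 1$ in the first assertion, invoking the weak-Young-type version of \eqref{eq:Riesz-domain-range} already stated in the paper in place of the classical Hardy--Littlewood--Sobolev inequality.
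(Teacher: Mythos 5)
Your argument is exactly the one the paper intends: it proves the lemma by combining the Gaussian upper bound \eqref{eq:Gaussian-bound} with the Riesz-potential mapping property \eqref{eq:Riesz-domain-range}, and your kernel computation (the Gamma-integral substitution giving $K^{A}(x,y)\le C|x-y|^{2s-n}$, valid precisely because $s<\tfrac{n}{2}$) supplies the ``easily concluded'' step the paper leaves implicit. The only caveat, which you already flag and which is inherited from the paper's own statement of \eqref{eq:Riesz-domain-range}, is the endpoint $p=1$, where the Riesz potential is in fact only of weak type; this does not affect the two displayed special cases, both of which have $p>1$.
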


Using the lemma above, we know that  
\[
(\mathcal{A}_{s}^{A}|f|)(x) = \frac{1}{\Gamma(s)} \int\limits_{\mathbb{R}^{n}} \int\limits_{0}^{\infty} \left| k_{t}^{A}(x,y) \frac{1}{t^{1-s}} f(y) \right| \, \D t \, \D y < \infty \quad \text{for a.e. $x \in \mathbb{R}^{n}$,}
\]
whenever $f \in L^{p}(\mathbb{R}^{n})$ for some $p \in [1,\frac{n}{2s})$. Therefore we can apply Fubini's theorem on \eqref{eq:average-operator-anisotropic} yields 
\begin{equation}
\big( \mathcal{A}_{s}^{A} f \big) (x) = \frac{1}{\Gamma(s)} \int\limits_{0}^{\infty} \bigg( \int\limits_{\mathbb{R}^{n}} k_{t}^{A}(x,y)f(y) \D y \bigg) \, \frac{\D t}{t^{1-s}} \equiv \frac{1}{\Gamma(s)} \int\limits_{0}^{\infty} (e^{t\nabla\cdot A\nabla}f)(x) \, \frac{\D t}{t^{1-s}} \label{eq:average-operator-anisotropic-semigroup}
\end{equation}
for all $f \in L^{p}(\mathbb{R}^{n})$ for some $p \in [1,\frac{n}{2s})$. In view of \eqref{eq:semigroup-definition-negative-Laplacian}, then we can define $(-\nabla\cdot A\nabla)^{-s}$ with a suitable domain rather than the artificial domain ${\rm dom}\,((-\nabla\cdot A\nabla)^{-s})$ given in \eqref{eq:domain-definition}. 

\begin{definition}\label{def:domain_of_anisotroipic_fractional_operator}
Let $n \ge 2$ be an integer and let $1 \le p < \frac{n}{2s}$ provided $0 < s < \frac{n}{2}$. Let $A(x) = (a_{ij}(x))_{i,j=1}^{n} \in (C^{\infty}(\mathbb{R}^{n}))^{n \times n}$ such that $a_{ij}=a_{ji}$ for all $i,j=1,\cdots,n$ and satisfies the ellipticity condition \eqref{eq:ellipticity}. Then the bounded linear operator $(-\nabla\cdot A\nabla)^{-s}: L^{p}(\mathbb{R}^{n}) \rightarrow L^{q}(\mathbb{R}^{n})$ with $\frac{1}{q} = \frac{1}{p} - \frac{2s}{n}$ is defined by 
\[
(-\nabla\cdot A\nabla)^{-s}f := \mathcal{A}_{s}^{A} f 
\]
for all $f \in L^{p}(\mathbb{R}^{n})$.
\end{definition}

\begin{remark}
The operator $(-\nabla\cdot A\nabla)^{-s}$ for $s \in \mathbb{C}$ with $\Re(s)>0$ also can be defined using Balakrishnan operator with domain ${\rm range}\,(A^{m})$ where $m$ is the minimum integer such that $m > \Re(s)$, see \cite[Definition~7.2.1]{MCSA01fractionalEllipticOperator}. 
\end{remark}

\bibliographystyle{custom}
\bibliography{ref}

\end{sloppypar}

\end{document}